\numberwithin{equation}{section}
\newtheorem{theorem}{Theorem}[section]
\newtheorem{lemma}[theorem]{Lemma}
\theoremstyle{definition}
\newtheorem{definition}[theorem]{Definition}
\theoremstyle{remark}
\newtheorem{remark}[theorem]{Remark}
\DeclareMathOperator{\supp}{supp}
\begin{document}

\title[Kawahara--Zakharov--Kuznetsov Equation]
{Initial-Boundary Value Problems on a Half-Strip for the Generalized Kawahara--Zakharov--Kuznetsov Equation}

\author[A.V.~Faminskii]{Andrei~V.~Faminskii}

\thanks{The work was supported by the Ministry of Science and Higher Education of Russian Federation: agreement no. 075-03-2020-223/3 (FSSF-2020-0018).}

\address{Peoples' Friendship University of Russia (RUDN University), 6 Miklukho--Maklaya Street, Moscow, 117198, Russian Federation}

\email{afaminskii@sci.pfu.edu.ru}

\subjclass{Primary 35Q53; Secondary 35B40}

\keywords{Kawahara equation, Zakharov--Kuznetsov equation, initial-boundary value problem, well-posedness, decay}

\date{}

\begin{abstract}
Initial-boundary value problems on a half-strip with different types of boundary conditions for the generalized Kawahara--Zakharov--Kuznetsov equation with nonlinearity of higher order are considered. In particular, nonlinearity can be quadratic and cubic. Results on global existence and uniqueness in classes of weak and strong solutions and large-time decay of small solutions are established. The solutions are considered in weighted at infinity Sobolev spaces. The use of weighted spaces is crucial for the study. To this end new interpolating inequalities in weighted anisotropic Sobolev spaces are established. Both exponential and power weights are admissible.
\end{abstract}

\maketitle

\section{Introduction. Description of main results}\label{S1}

The famous Korteweg--de~Vries equation (KdV)
$$
u_t +u_{xxx} +bu_x +uu_x=0
$$
is the model equation describing propagation of one dimensional nonlinear waves in dispersive media. As well as the KdV equation the modified Korteweg--de~Vries equation (mKdV)
$$
u_t +u_{xxx} +bu_x +au^2u_x=0
$$
is used in real physical situations (see, for example, \cite{KO}). Moreover, one can consider wave processes with more complicated nonlinear effect and for its description use the generalized KdV equation
$$
u_t +u_{xxx} +bu_x +g'(u)u_x=0.
$$
In the case of dispersive effects of higher orders the KdV equation can be substituted, for example, by the Kawahara equation (see, \cite{Kaw})
$$
u_t -u_{xxxxx} +u_{xxx} +bu_x +u u_x=0
$$
or its generalized analogue
$$
u_t -u_{xxxxx} +u_{xxx} +bu_x +g'(u) u_x=0.
$$

The Zakharov--Kuznetsov equation (ZK) in the case of two spatial dimensions
$$
u_t +u_{xxx}+u_{xyy} +bu_x +uu_x=0
$$
extends the KdV model to the situation, when the  waves propagate in one preassigned ($x$) direction with deformations in the transverse ($y$) direction.
For the first time it was derived in \cite{ZK} for description of  ion-acoustic waves in magnetized plasma. As well as the mKdV equation one can consider the modified Zakharov--Kuznetsov equation
$$
u_t +u_{xxx}+u_{xyy} +bu_x +a u^2u_x=0,
$$
which also has the physical meaning (see \cite{SB}) or more generally the generalized ZK equation
$$
u_t +u_{xxx}+u_{xyy} +bu_x + g'(u)u_x=0.
$$

The Kawahara--Zakharov--Kuznetsov equation (KZK)
$$
u_t -u_{xxxxx} +u_{xxx}+u_{xyy} +bu_x + uu_x=0,
$$
is the natural model for two-dimensional nonlinear waves propagating in the media with the higher order dispersion. Its physical motivation can be found in \cite{EEA}. Its version with more general nonlinearity is written as follows:
$$
u_t -u_{xxxxx} +u_{xxx}+u_{xyy} +bu_x + g'(u)u_x=0
$$
and is the object of the study (in the non-homogeneous case) of the present paper. The goal of the paper is to establish global existence and uniqueness of solutions to initial-boundary value problems for such equation as well as large-time decay under small input data. The obtained results are valid both for the KZK equation itself and for the modified one ($g'(u) = au^2$). 

There is a lot of literature devoted to various aspects of the KdV equation and its extensions with more general nonlinearity and here we do not write about them. The Kawahara equation and its extensions with more general nonlinearity are studied considerably less. The initial value problem and initial-boundary value problems on semi-axes are considered, for example, in \cite{BL97, CK, CG, CDT, DL, FM21, FO14, FO15, GL, KF, L14, LS15, N, SF, TC, ZLSL}. Also during last 30 years there were published several papers devoted to initial-boundary value problems (especially to the pure initial value problem) for the ZK equation and its extensions with more general nonlinearity (see, for example, bibliography in \cite{F18, F20, F21} and recent papers \cite{CFL, KST, MMPP, Sh}). Wide classes of quasilinear dispersive equations were considered, for example, in \cite{F89, F12, Sa}. However, these equations were isotropic with respect to the orders of the highest spatial derivatives (as in the ZK equation).

The two-dimensional Kawahara--Zakharov--Kuznetsov equation was for the first time considered in \cite{L14}. An initial-boundary value problem on a half-strip was studied and results on global existence, uniqueness of regular solutions as well as large-time decay in the case of small initial data were established. The comparison of our results with the results from \cite{L14} are presented further (see Remark \ref{R1.3}). In \cite{LS16} the three-dimensional case of the KZK equation was considered.

In the present paper we consider initial-boundary value problems in a domain $\Pi_T^+=(0,T)\times\Sigma_+$, where $\Sigma_+=\mathbb R_+\times (0,L)=\{(x,y): x>0, 0<y<L\}$ is a half-strip of a given width $L$ and $T>0$ is arbitrary, for an equation
\begin{equation}\label{1.1}
u_t -u_{xxxxx} +u_{xxx}+u_{xyy} +bu_x + g'(u)u_x=f(t,x,y)
\end{equation}
($b\in \mathbb R$ -- constant) with an initial condition
\begin{equation}\label{1.2}
u(0,x,y)=u_0(x,y),\qquad (x,y)\in\Sigma_+,
\end{equation}
a boundary condition 
\begin{equation}\label{1.3}
u(t,0,y)=0,\quad u_x(t,0,y) =0,\qquad (t,y)\in B_{T}=(0,T)\times (0,L),
\end{equation}
and boundary conditions for $(t,x)\in \Omega_{T,+}=(0,T)\times\mathbb R_+$ of one of the following four types: 
\begin{equation}\label{1.4}
\begin{split}
\mbox{whether}\qquad &a)\mbox{ } u(t,x,0)=u(t,x,L)=0,\\
\mbox{or}\qquad &b)\mbox{ } u_y(t,x,0)=u_y(t,x,L)=0,\\
\mbox{or}\qquad &c)\mbox{ } u(t,x,0)=u_y(t,x,L)=0,\\
\mbox{or}\qquad &d)\mbox{ } u \mbox{ is an  $L$-periodic function with respect to $y$.}
\end{split}
\end{equation}
We use the notation "problem \eqref{1.1}--\eqref{1.4}" for each of these four cases.

The choice of such a domain can be verified by the physical nature of the considered model, that is wave propagation in a channel of a finite width from the left boundary. Dirichlet boundary condition a) in \eqref{1.4} from the physical point of view corresponds to wave propagation processes with the absence of deformations on the boundaries of the channel, Neumann boundary condition b) is the no-flow condition through this boundaries, periodic boundary condition d) describes wave propagation in the media with periodic structure. Note that results on existence and uniqueness are the same for all types of boundary conditions \eqref{1.4}.

Results on global existence are bases on estimates which are the analogues of the following conservation laws for the initial value problem
\begin{equation}\label{1.5}
\iint_{\mathbb R^2} u^2\,dxdy = \text{const}, \quad
\iint_{\mathbb R^2} \bigl(u_{xx}^2 +u_x^2 +u_y^2 - 2g^*(u)\bigr)\,dxdy =\text{const}, 
\end{equation}
where here and further 
\begin{equation}\label{1.6}
g^*(u) \equiv \int_0^u g(\theta)\, d\theta.
\end{equation}
Further we differ the situations when only the analogue of the first conservation law \eqref{1.5} is used and when all two of them are used and call solutions from the first class weak solutions, while from the second class -- strong solutions. Without loss of generality we assume further that $g(0)=0$.

The results are obtained in the weighted at $+\infty$ with respect to $x$ anisotropic Sobolev spaces. The use of weighted spaces is crucial for the developed theory. Both power and exponential weights are allowed. Special interpolating multiplicative inequalities are established further (see Lemma \ref{L2.1})

In what follows (unless stated otherwise) $i$, $j$, $k$, $l$, $m$, $n$ mean non-negative integers, $p\in [1,+\infty]$, $s\in\mathbb R$.  For any multi-index $\nu=(\nu_1,\nu_2)$ let $\partial^\nu =\partial^{\nu_1}_{x}\partial^{\nu_2}_{y}$. Let $L_{p,+}=L_p(\Sigma_+)$.

Introduce special function spaces $\widetilde H_+^k$ taking into account boundary conditions \eqref{1.4} and the anisotropic character of the considered equation. Let 
$\widetilde H^0_+=L_{2,+}$; for $k \geq 1$ let the space $\widetilde H_+^k$ consists of functions $\varphi(x)$ such that $\partial^\nu \varphi \in L_{2,+}$ if $\nu_1 + 2\nu_2 \leq 2k$ and \linebreak in the case a) 
$$
\partial_y^{2m}\varphi|_{y=0}=\partial_y^{2m}\varphi|_{y=L}=0 \quad \forall m<k/2,
$$
in the case b) 
$$
\partial_y^{2m+1}\varphi|_{y=0}=\partial_y^{2m+1}\varphi|_{y=L}=0\quad \forall m<(k-1)/2,
$$
in the case c) 
$$
\partial_y^{2m}\varphi|_{y=0}=0 \quad \forall m<k/2,\quad \partial_y^{2m+1}\varphi|_{y=L}=0\quad \forall m<(k-1)/2,
$$ 
in the case d) 
$$
\partial_y^{m}\varphi|_{y=0}=\partial_y^{m}\varphi|_{y=L} \quad \forall m<k.
$$  

We say that $\psi(x)$ is an admissible weight function if $\psi$ is an infinitely smooth positive function on $\overline{\mathbb R}_+$, such that 
\begin{gather}\label{1.7}
|\psi^{(j)}(x)|\leq c(j)\psi(x)\quad \text{for each natural\ } j \text{\ and\ } \forall x\geq 0;\\
\label{1.8}
\psi(x_1) \leq c\psi(x_2)\quad \text{for certain constant\ }c>0 \quad \text{and } \forall\  x_1 +1 >x_2 > x_1 \geq 0.
\end{gather}

Note that such a function satisfies an inequality $\psi(x) \leq ce^{c_0 x}$ for certain positive constants $c_0$, $c$ and all $x\geq 0$. It was shown in \cite{F12} that $\psi^s$ for any $s\in\mathbb R$ satisfies \eqref{1.7}; it is easy to see that $\psi^s$ also satisfies \eqref{1.8}, so $\psi^s$ is an admissible weight function. Any exponent $e^{2\alpha x}$ as well as $(1+x)^{2\alpha}$ are admissible weight functions. As an another important example of admissible functions, we define 
$\rho_0(x)\equiv 1+ \frac{2}{\pi}\arctan x$. Note that both $\rho_0$ and $\rho'_0$ are admissible weight functions.

Note that, for example, in \cite{F18, F20, F21} condition \eqref{1.8} was not used for the definition of the admissible weight function. Here we have to introduce it in order to establish interpolating inequalities in anisotropic Sobolev spaces in Lemma \ref{L2.1}.

For an admissible weight function $\psi(x)$ let $\widetilde H^{k,\psi(x)}_+$ be a space of functions $\varphi(x,y)$ such that $\varphi\psi^{1/2}(x)\in \widetilde H^k_+$. Let $L_{2,+}^{\psi(x)}= \widetilde H^{0,\psi(x)}_+= \{\varphi(x,y): \varphi\psi^{1/2}(x)\in L_{2,+}\}$. Obviously, $L_{2,+}^{\rho_0(x)}=L_{2,+}$.

We construct solutions to the considered problems in spaces $X_w^{k,\psi(x)}(\Pi_T^+)$ for $k=0$ (weak solutions) and $k=1$ (strong solutions), for admissible weight functions $\psi(x)$ verifying $\psi'(x)$ are also admissible weight functions, consisting of functions $u(t,x,y)$, such that 
\begin{equation}\label{1.9}
u\in C_w([0,T]; \widetilde H_+^{k,\psi(x)})\cap L_2(0,T;\widetilde H_+^{k+1,\psi'(x)})
\end{equation}
(the subscript $w$ means the weak continuity). Let $X_w^{\psi(x)}(\Pi_T^+)=X_w^{0,\psi(x)}(\Pi_T^+)$. For auxiliary linear results we also use spaces $X^{k,\psi(x)}(\Pi_T^+)$, where in comparison with $X_w^{k,\psi(x)}(\Pi_T^+)$ the weak continuity with respect to $t$ in \eqref{1.9} is substituted by the strong one. 

Define also
\begin{equation}\label{1.10}
\lambda^+(u;T) =
\sup_{x_0\geq 0}\int_0^T\! \int_{x_0}^{x_0+1}\! \int_0^L u^2\,dydxdt.
\end{equation}

Introduce the notion of weak solutions to the considered problems. To this end define special function spaces of smooth functions. Let 
$\widetilde{\EuScript S}(\overline{\Sigma}_+)$ be a space of infinitely smooth on $\overline{\Sigma}_+$ functions $\varphi(x,y)$ such that $\displaystyle{(1+x)^n|\partial^\alpha\varphi(x,y)|\leq c(n,\alpha)}$ for any $n$, multi-index $\alpha$, $(x,y)\in \overline{\Sigma}_+$ and $\partial_y^{2m}\varphi\big|_{y=0} =\partial_y^{2m}\varphi\big|_{y=L}=0$ in the case a), $\partial_y^{2m+1}\varphi\big|_{y=0} =\partial_y^{2m+1}\varphi\big|_{y=L}=0$ in the case b), $\partial_y^{2m}\varphi\big|_{y=0} =\partial_y^{2m+1}\varphi\big|_{y=L}=0$ in the case c), $\partial_y^{m}\varphi\big|_{y=0} =\partial_y^{m}\varphi\big|_{y=L}$ in the case d) for any $m$.

\begin{definition}\label{D1.1}
Let $u_0\in L_{2,+}$, $f\in L_1(0,T;L_{2,+})$. A function $u\in L_2(\Pi_T^+)$ is called a weak solution to problem \eqref{1.1}--\eqref{1.4} if for any function $\phi\in \widetilde{\EuScript S}(\overline{\Sigma}_+)$, $\phi\big|_{t=T}=0$, $\phi\big|_{x=0} =\phi_x\big|_{x=0}= \phi_{xx}\big|_{x=0} =0$, the function $g(u)\phi_x\in L_1(\Pi_T^+)$ and the following equality holds:
\begin{equation}\label{1.11}
\iiint_{\Pi_T^+}\Bigl[u(\phi_t -\phi_{xxxxx} +\phi_{xxx}+\phi_{xyy} +b\phi_x) +g(u)\phi_x + f\phi \Bigl]\,dxdydt 
+\iint_{\Sigma_+} u_0\phi\big|_{t=0}\,dxdy =0.
\end{equation}
\end{definition}

Now we can formulate the main results of the paper.

\begin{theorem}[global existence and uniqueness of weak solutions]\label{T1.1}
Let $u_0\in L_{2,+}^{\psi(x)}$, $f\in L_1(0,T;  L_{2,+}^{\psi(x)})$ for certain admissible weight function $\psi(x)$, such that $\psi'(x)$ is also an admissible weight function. Let the function $g\in C^1(\mathbb R)$ and for certain constants $p\in [0,8/3)$ and $c>0$
\begin{equation}\label{1.12}
|g'(u)| \leq c |u|^p\quad \forall u\in \mathbb R,
\end{equation}
and if $p>1$ the function $\psi$ for certain constants $n$ and $c>0$ satisfies an inequality $\psi(x) \leq c(1+x)^n\psi'(x)$.
Then there exists a weak solution to problem \eqref{1.1}--\eqref{1.4} $u\in X_w^{\psi(x)}(\Pi_T^+)$; moreover $\lambda^+(u_{xx};T) + \lambda^+(u_y;T) <+\infty$.  If, in addition, $p\leq 2$ in \eqref{1.12} and for certain positive constant $c_0$
\begin{equation}\label{1.13}
(\psi'(x))^{2+3p} \psi^{p-2}(x)\geq c_0\quad \forall x\geq 0,
\end{equation}
then this solution is unique in $X_w^{\psi(x)}(\Pi_T^+)$.
\end{theorem}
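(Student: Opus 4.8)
The plan is to produce the weak solution as a limit of smooth solutions of a regularized problem, and then to read off uniqueness from an energy estimate for the difference of two solutions; in both parts the nonlinearity will be controlled exclusively through the weighted anisotropic interpolation of Lemma~\ref{L2.1}. First I would pass to a regularized problem --- smoothing and compactly truncating $u_0$ and $f$ and adjoining a small dissipative term that turns \eqref{1.1} into a problem globally solvable in the smoother classes $X^{k,\psi(x)}(\Pi_T^+)$ by the auxiliary linear theory together with a contraction-plus-continuation argument. All the substantive work then reduces to a priori estimates uniform in the regularization parameters, carried out at a regularity level where the energy identities are legitimate.

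The basic estimate is the weighted analogue of the first conservation law in \eqref{1.5}. Multiplying \eqref{1.1} by $2u\psi(x)$ and integrating over $\Sigma_+$, integration by parts in $x$ (using \eqref{1.3}, which forces $u=u_x=u_y=0$ at $x=0$) and in $y$ (using \eqref{1.4}, under which every transverse boundary term vanishes) yields the dissipative gain
\begin{equation*}
\frac{d}{dt}\iint_{\Sigma_+}u^2\psi\,dxdy+(\text{nonnegative boundary terms at }x=0)+\iint_{\Sigma_+}\bigl(c_1u_{xx}^2+c_2u_x^2+c_3u_y^2\bigr)\psi'\,dxdy=R,
\end{equation*}
with $c_1,c_2,c_3>0$, where $R$ collects the commutators carrying $\psi^{(j)}$ for $j\ge 2$, the forcing $2\iint fu\psi$, and the nonlinear contribution. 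It is essential here that $\psi'$ is itself admissible: then $|\psi^{(j)}|\le c\psi'$ for $j\ge 2$, so by Young's inequality every commutator is absorbed into a small multiple of the quadratic smoothing form plus $C\iint u^2\psi$. Writing $g'(u)u_x=\partial_x g(u)$ and integrating by parts, the nonlinear contribution becomes $-2\iint\psi' h(u)\,dxdy$ with $h(u)=\int_0^u g'(\theta)\theta\,d\theta$, the boundary term at $x=0$ vanishing since $h(0)=0$; then \eqref{1.12} gives the pointwise bound $|h(u)|\le c|u|^{p+2}$.

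The heart of the existence proof is to absorb $\iint\psi'|u|^{p+2}$. Applying Lemma~\ref{L2.1} to $u\psi'^{1/2}$, this term is dominated by $\varepsilon\iint(u_{xx}^2+u_y^2)\psi'$ plus lower-order factors, where the power of the smoothing form equals $3p/8$ in the anisotropic homogeneity (the $y$-direction counted with weight two, as in the condition $\nu_1+2\nu_2\le 2k$ defining $\widetilde H_+^k$); the hypothesis $p<8/3$ is exactly $3p/8<1$, which makes this power strictly less than one so that the smoothing term moves to the left-hand side. By the structure of Lemma~\ref{L2.1} the residual is at most a first power of $\iint u^2\psi$ times a coefficient controlled by the unweighted $L_2$-norm, and the latter is bounded for all $t$ by the plain estimate obtained on multiplying \eqref{1.1} by $2u$ (where the nonlinear term integrates to zero and $f\in L_1(0,T;L_{2,+})$ handles the forcing). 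A now linear Gronwall inequality then gives the uniform bounds $u\in L_\infty(0,T;L_{2,+}^{\psi})$ and $u\in L_2(0,T;\widetilde H_+^{1,\psi'})$, the latter encoding $\lambda^+(u_{xx};T)+\lambda^+(u_y;T)<+\infty$. A compactness argument --- the smoothing gain supplying the spatial and temporal estimates --- lets me pass to the limit in the regularization, recover \eqref{1.11}, and establish the weak continuity in $t$, so the limit lies in $X_w^{\psi(x)}(\Pi_T^+)$.

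For uniqueness I set $w=u_1-u_2$ for two solutions with the same data and test the difference equation against $2w\psi$. The dispersive part again produces the smoothing form for $w$, while the nonlinear difference $-\partial_x[g(u_1)-g(u_2)]$, with $|g(u_1)-g(u_2)|\le c(|u_1|^p+|u_2|^p)|w|$, yields after one integration by parts the two terms $\iint 2\psi'(g(u_1)-g(u_2))w$ and $\iint 2\psi(g(u_1)-g(u_2))w_x$. The first is immediately of the form $\iint\psi'(|u_1|^p+|u_2|^p)w^2$; in the second the appearance of $\psi$ rather than $\psi'$ is the genuine obstacle, since Cauchy--Schwarz against $\iint w_x^2\psi'$ leaves the weight $\psi^2/\psi'$ multiplying $(|u_1|^p+|u_2|^p)^2w^2$. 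Here the restriction $p\le 2$ and the compatibility condition \eqref{1.13} are precisely what allow Lemma~\ref{L2.1}, applied with these mixed weights and with the smoothing and $L_2$ bounds already available for $u_1,u_2$, to dominate both terms by $\varepsilon$ times the smoothing form of $w$ plus an $L_1(0,T)$-in-time multiple of $\iint w^2\psi$, whence Gronwall forces $w\equiv 0$. I expect the main difficulty throughout to be exactly this weighted anisotropic control of the nonlinearity --- the existence threshold $p<8/3$ and the uniqueness conditions $p\le 2$, \eqref{1.13} all emerge from Lemma~\ref{L2.1} --- compounded by the need to justify the energy identities at the low regularity of weak solutions, which is what dictates performing all estimates first on the regularized level.
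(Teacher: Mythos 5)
Your overall architecture coincides with the paper's: a weighted analogue of the first conservation law, absorption of the nonlinear term $\iint|u|^{p+2}\psi'\,dxdy$ through the anisotropic interpolation of Lemma~\ref{L2.1} with exponent $3p/8<1$ (exactly the paper's central estimate \eqref{3.9}), compactness to pass to the limit, and uniqueness by testing the difference with $2w\psi$ and closing a Gronwall inequality via mixed-weight interpolation, with \eqref{1.13} and $p\le2$ entering precisely as in \eqref{4.3}--\eqref{4.5}. However, two steps as you propose them have genuine gaps. The first is the regularization itself. The paper does not add a dissipative term: it truncates the nonlinearity, $g_h'(u)=g'(u)\eta(2-h|u|)$ as in \eqref{3.5}, so that $g_h$ is globally Lipschitz; only then does the contraction of Lemma~\ref{L3.1}, built on Lemma~\ref{L2.7} with $f_1=g_h(v)$, close on $X^{\psi(x)}(\Pi_{t_0}^+)$ with constants uniform in the data, giving \emph{global} solvability of the regularized problem immediately. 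With the untruncated $g$ (where $|g'(u)|\le c|u|^p$, $p$ up to $8/3$) the Lipschitz estimate for $v\mapsto g(v)$ does not close at the regularity of $X^{\psi(x)}$, since that class gives no pointwise-in-$t$ control of $\|v\|_{L_{\infty,+}}$; and a dissipative term would force you to work in smoother classes and to rebuild the whole linear theory of Section~\ref{S2} for a different operator on the half-strip, including extra boundary conditions. Your ``contraction-plus-continuation'' claim is unsupported exactly at the local-solvability step.

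The second gap concerns the clause $\lambda^+(u_{xx};T)+\lambda^+(u_y;T)<+\infty$. You assert that the bound $u\in L_2(0,T;\widetilde H_+^{1,\psi'(x)})$ ``encodes'' it, but this is false whenever $\psi'$ decays --- e.g.\ the power weights $(1+x)^{2\alpha}$ with $\alpha<1/2$, or the unweighted case $\psi=\rho_0$ of Remark~\ref{R1.1}, all admitted by the theorem --- since a weighted bound with vanishing weight cannot control the supremum over $x_0$ of the unit-window integrals in \eqref{1.10}. The paper obtains this conclusion separately, by rewriting the energy identity \eqref{3.6} with $\rho(x)\equiv\rho_0(x-x_0)$ and using that $\rho_0$ is bounded uniformly in $x_0$ while $\rho_0'(x-x_0)$ is bounded below on $(x_0,x_0+1)$; this is \eqref{3.11}. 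Two smaller omissions: your argument never invokes the hypothesis $\psi(x)\le c(1+x)^n\psi'(x)$ for $p>1$, which the paper needs at the very end to verify $g(u)\phi_x\in L_1(\Pi_T^+)$ for the non-compactly-supported test functions of Definition~\ref{D1.1}; and in the uniqueness part regularization is unavailable (the two solutions are merely in $X_w^{\psi(x)}$), so the energy identity for $w$ must be justified by checking $g'(u)u_x\in L_1(0,T;L_{2,+}^{\psi(x)})$ and appealing to Lemma~\ref{L2.7} --- a step which itself uses \eqref{1.13}, as in \eqref{4.2}, and which your outline leaves implicit.
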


\begin{remark}\label{R1.1}
The exponential weight $\psi(x)\equiv e^{2\alpha x}$ $\forall \alpha>0$ and the power weight $\psi(x)\equiv (1+x)^{2\alpha}$, $\alpha\geq (3p+2)/(8p)$ satisfy the hypothesis of the theorem (including uniqueness). If $u_0\in L_{2,+}$, $f\in L_1(0,T;L_{2,+})$, there exists a weak solution $u\in C_w([0,T];L_{2,+})$, $\lambda^+(u_{xx};T) + \lambda^+(u_y;T) <+\infty$. In comparison with the results from \cite{F18} for the similar problems for the ZK equation itself ($p=1$), the uniqueness assumption for the power weights $\alpha\geq 1$ in \cite{F18} is weakened here to the assumption $\alpha\geq 5/8$.
\end{remark}

\begin{theorem}[global existence and uniqueness of strong solutions]\label{T1.2}
Let $u_0\in \widetilde H^{1,\psi(x)}_+$, $f\in L_2(0,T;  \widetilde H^{1,\psi(x)}_+)$ for certain admissible weight function $\psi(x)$, such that $\psi'(x)$ is also an admissible weight function, $u_0(0,y)\equiv 0$. Let $g\in C^2(\mathbb R)$ and verifies condition \eqref{1.12} for $p\in [0,8/3)$.
Then there exists a strong solution to problem \eqref{1.1}--\eqref{1.4} $u\in X_w^{1,\psi(x)}(\Pi_T^+)$; moreover, $\lambda^+(u_{xxxx};T) + \lambda^+(u_{yy};T)<+\infty$. 
If, in addition, for certain constants $q\geq 0$ and $c>0$
\begin{equation}\label{1.14}
|g''(u)| \leq c |u|^q\quad \forall u\in\mathbb R
\end{equation}
and for certain positive constant $c_0$
\begin{equation}\label{1.15}
\psi'(x) \psi^{4q+3}(x) \geq c_0\quad \forall x\geq 0,
\end{equation}
then this solution is unique in the space $X_w^{1,\psi(x)}(\Pi_T^+)$.
\end{theorem}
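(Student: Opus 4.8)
\emph{Overall strategy.} The plan is to obtain existence by a parabolic regularization together with a priori estimates that are weighted analogues of the conservation laws \eqref{1.5}, uniform in the regularization parameter, and then to pass to the limit; uniqueness will follow from a weighted energy estimate for the difference of two solutions. First I would regularize \eqref{1.1} by adding a higher-order dissipative term, for instance replacing it by
\begin{equation*}
u_t -\varepsilon u_{xxxxxx}+\varepsilon u_{yyyy}-u_{xxxxx} +u_{xxx}+u_{xyy} +bu_x + g'(u)u_x=f,
\end{equation*}
where the signs are chosen so that multiplication by $u$ produces the dissipation $\varepsilon(\|u_{xxx}\|^2+\|u_{yy}\|^2)$. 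Supplementing this with \eqref{1.2}, \eqref{1.3}, \eqref{1.4} and additional homogeneous boundary conditions at $x=0$ adapted to the sixth-order term, one gets for each fixed $\varepsilon>0$ a well-posed parabolic problem. Working in the weighted class $X^{1,\psi(x)}(\Pi_T^+)$ (the unbounded $x$-direction being controlled by $\psi$, after a truncation to $(0,M)\times(0,L)$ if convenient), a contraction argument yields a local-in-time solution, which the estimates below extend to all of $[0,T]$.

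\emph{A priori estimates.} The core of the proof is to bound the regularized solutions uniformly in $\varepsilon$. Multiplying by $2u\psi$ and integrating over $\Sigma_+$ gives the weighted $L_2$ estimate: after integration by parts using \eqref{1.3}--\eqref{1.4}, the skew-symmetric dispersive part yields a nonnegative boundary gain $\sim\|u_{xx}|_{x=0}\|^2$ together with interior terms carrying $\psi',\psi'',\dots$ controlled by \eqref{1.7}, and, crucially, the $\psi'$-weighted dissipation that delivers $\lambda^+(u_{xx};T)+\lambda^+(u_y;T)<+\infty$. The decisive second estimate is the weighted analogue of the second law in \eqref{1.5}: testing the equation against $2\bigl(u_{xxxx}-u_{xx}-u_{yy}-g(u)\bigr)\psi$ (plus lower-order weight corrections). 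Here the nonlinear contributions, after integration by parts to eliminate all second derivatives of $g$ and re-express them through $g'(u)$ alone, are estimated by means of \eqref{1.12} with $p<8/3$ and the anisotropic interpolation inequalities of Lemma \ref{L2.1}; this step controls $\|u\|_{C_w([0,T];\widetilde H^{1,\psi(x)}_+)}$ and, via the $\psi'$-terms, $\lambda^+(u_{xxxx};T)+\lambda^+(u_{yy};T)<+\infty$, hence the bound in $L_2(0,T;\widetilde H^{2,\psi'(x)}_+)$ required by \eqref{1.9}. I expect this second estimate to be the main obstacle: the multiplier $u_{xxxx}\psi$ generates high-order nonlinear terms whose control rests precisely on the subcriticality threshold $p<8/3$ and on Lemma \ref{L2.1}, and balancing them against the $\psi'$-weighted smoothing uniformly in $\varepsilon$ is the technical heart of the argument; the hypothesis $g\in C^2$ enters only to justify these manipulations on the smooth approximations.

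\emph{Passage to the limit.} The uniform bounds give weak compactness in the spaces of \eqref{1.9}. Deriving from the equation a bound on $u^\varepsilon_t$ in a space of negative order, an Aubin--Lions type argument on bounded-in-$x$ subdomains yields strong $L_2$-convergence on compacts, the weight controlling the $x$-tails; since $p<8/3$, Lemma \ref{L2.1} then ensures convergence of $g(u^\varepsilon)$, so the nonlinear term passes to the limit. The resulting $u$ lies in $X_w^{1,\psi(x)}(\Pi_T^+)$, satisfies the integral identity \eqref{1.11}, and the weak continuity $C_w$ follows in the standard way from the uniform bounds and the equation.

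\emph{Uniqueness.} For two solutions put $w=u_1-u_2$; then $w$ solves the linear KZK equation with zero data and right-hand side $-g'(u_1)w_x-\bigl(g'(u_1)-g'(u_2)\bigr)u_{2x}$. Multiplying by $2w\psi$ and integrating, the dispersive part again produces the boundary gain and the $\psi'$-dissipation, while the difference of nonlinearities is treated by the mean value theorem, $g'(u_1)-g'(u_2)=g''(\theta)w$ with $|g''(\theta)|\le c(|u_1|+|u_2|)^q$ from \eqref{1.14}. The resulting term $\iint(|u_1|+|u_2|)^q|u_{2x}|w^2\psi$ is estimated through the $\widetilde H^1$ and local-smoothing norms of $u_1,u_2$; the weight hypothesis \eqref{1.15}, with the exponent $4q+3$ coming from exactly this interpolation, is what lets it be dominated by the available dissipation, after which Gronwall's inequality forces $w\equiv0$.
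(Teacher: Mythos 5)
Your proposal is correct in its analytic core, which coincides with the paper's: the a priori estimates are precisely the weighted analogues of the two conservation laws \eqref{1.5}, with the threshold $p<8/3$ entering through Lemma \ref{L2.1} exactly as you predict (in the paper via \eqref{3.17}--\eqref{3.18}, where the exponent $3p/8<1$ makes $\iint g^*(u)\,dxdy$ subdominant in the $H^1$-level energy), and uniqueness is a $\psi$-weighted $L_2$ estimate for the difference in which \eqref{1.14}--\eqref{1.15} control the nonlinear term, the exponent $4q+3$ arising from the splitting $\psi\le c\,\psi^{(1+q)/2}(\psi')^{1/8}\psi^{7/8}$ combined with Lemma \ref{L2.1} for the Lebesgue exponent $12/5$, i.e. $s=s(0,12/5)=1/16$ (the paper's \eqref{4.8}). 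Where you genuinely diverge is the existence scaffolding: the paper uses no parabolic regularization; instead it truncates the nonlinearity, $g'_h(u)=g'(u)\eta(2-h|u|)$, cuts off the data as in \eqref{3.4}, solves the truncated problem by contraction around the linear theory of Lemmas \ref{L2.7}--\ref{L2.8} (themselves built on smooth linear solutions from Lemma \ref{L2.4} via Galerkin), and then passes to the limit $h\to+0$ using the uniform estimate \eqref{3.15} and the same Aubin--Lions compactness you invoke. The paper's route buys two things your sixth-order regularization must pay for by hand: first, no artificial boundary conditions --- note that your term $\varepsilon u_{yyyy}$ raises the order in $y$, so in cases a)--c) of \eqref{1.4} you need extra conditions at $y=0$ and $y=L$ (not only at $x=0$), chosen sign-compatibly with both the $L_2$ and the $H^1$ multipliers, and you must verify that no spurious boundary condition survives in the limit; second, the linear estimates \eqref{2.29}, \eqref{2.34} directly yield the traces $\mu_2$, $\mu_4$ and the $\psi'$-weighted smoothing, whereas in your scheme the $\varepsilon$-terms must be absorbed uniformly in the weighted identities. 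Two smaller remarks: the paper runs the $H^1$-level estimate first unweighted ($\rho\equiv1$) to obtain $\|u\|_{L_\infty(\Pi_{T'}^+)}\le c$ in \eqref{3.21}, and only then the weighted version using $|g(u)|\le c|u|$ --- your merged one-step weighted estimate can be made to work since $3p/8<1$, but is less transparent; and in the uniqueness step the term $\iint g'(u_1)w_xw\psi\,dxdy$, which your sketch leaves implicit, must be integrated by parts (as in \eqref{4.7}) to trade $w_x$ for $g''(u_1)u_{1x}w^2\psi$ and $g'(u_1)w^2\psi'$, because the available dissipation carries only the weight $\psi'$ and a direct bound on $\|w_x\psi^{1/2}\|_{L_{2,+}}$ is unavailable for power weights.
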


\begin{remark}\label{R1.2}
Let $|g'(u)|\leq c|u|^p$ for certain $p\in [1,8/3)$, $|g''(u)|\leq c |u|^{p-1}$. Then the exponential weight $\psi(x)\equiv e^{2\alpha x}$ $\forall \alpha>0$ and the power weight $\psi(x)\equiv (1+x)^{2\alpha}$ if $\alpha\geq 1/(8p)$ satisfy the hypothesis of the theorem. If $u_0\in \widetilde H_+^1$, $u_0(0,y)\equiv 0$ and $f\in L_2(0,T;\widetilde H_+^1)$ there exists a weak solution $u\in C_w([0,T];\widetilde H_+^1)$, $\lambda^+(u_{xxxx};T) + \lambda^+(u_{yy};T)<+\infty$.  In comparison with the results from \cite{F18} for the similar problems for the ZK equation itself, the uniqueness assumption for the power weights $\alpha\geq 1/2$ in \cite{F18} is weakened here to the assumption $\alpha\geq 1/8$.  In comparison with the results from \cite{F21} for the similar problems for the modified ZK equation itself $(p=2)$, the uniqueness assumption for the power weights $\alpha\geq 3/8$ in \cite{F18} is weakened here to the assumption $\alpha\geq 1/16$. Moreover, in \cite{F21} the global existence result for the modified ZK equation in the focusing case $g'(u) = u^2$ was established only for small input data.
\end{remark}

Finally, we present results on large-time decay of solutions for the types a) and c) of boundary conditions \eqref{1.4}. 

\begin{theorem}[large-time decay of weak solutions]\label{T1.3}
Let the function $g\in C^1(\mathbb R)$ satisfies inequality \eqref{1.12} for $p\in (0,2]$.
Let $L_0=+\infty$ if $b\leq 0$, and if $b>0$ there exists $L_0>0$, such that in both cases for any $L\in (0,L_0)$ there exist $\alpha_0>0$, $\epsilon_0>0$ and $\beta>0$, such that if $u_0\in L_{2,+}^{e^{2\alpha x}}$ for $\alpha\in (0,\alpha_0]$, $\|u_0\|_{L_{2,+}} \leq \epsilon_0$, $f\equiv 0$, the corresponding unique solution $u(t,x,y$) to problem \eqref{1.1}--\eqref{1.4} in the cases a) and c) in \eqref{1.4} from the space $X_w^{e^{2\alpha x}}(\Pi_T^+)$ $\forall T>0$ satisfies an inequality
\begin{equation}\label{1.16}
\|e^{\alpha x}u(t,\cdot,\cdot)\|^2_{L_{2,+}}\leq e^{-\alpha\beta t}\|e^{\alpha x}u_0\|_{L_{2,+}}^2\qquad \forall t\geq 0.
\end{equation}
\end{theorem}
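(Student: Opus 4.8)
The plan is to establish \eqref{1.16} as a weighted energy a priori bound with the exponential weight $\psi(x)=e^{2\alpha x}$, which by the approximation underlying Theorem~\ref{T1.1} transfers to the constructed solution. Set $E(t)=\|e^{\alpha x}u(t,\cdot,\cdot)\|_{L_{2,+}}^2=\iint_{\Sigma_+}\psi u^2\,dxdy$. The goal is the differential inequality $E'(t)\le-\alpha\beta E(t)$, which integrates to \eqref{1.16}.

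First I would record the unconditional $L_2$ decay. Multiplying \eqref{1.1} (with $f\equiv0$) by $2u$ and integrating over $\Sigma_+$, all interior and $y$-boundary contributions cancel by virtue of \eqref{1.3}--\eqref{1.4}, leaving $\frac{d}{dt}\|u\|_{L_{2,+}}^2=-\int_0^L u_{xx}^2\big|_{x=0}\,dy\le0$. Hence $\|u(t)\|_{L_{2,+}}\le\|u_0\|_{L_{2,+}}\le\epsilon_0$ for all $t$, and this uniform-in-time smallness is precisely what makes the nonlinear term absorbable.

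Next, multiply \eqref{1.1} by $2\psi u$ and integrate by parts in $x$ (using $u=u_x=0$ at $x=0$, whence $u_y=0$ there as well) and in $y$. For $\psi=e^{2\alpha x}$ one has $\psi'=2\alpha\psi$, $\psi'''=8\alpha^3\psi$, $\psi^{(5)}=32\alpha^5\psi$, so the fifth- and third-order terms contribute the dissipative $-10\alpha\iint\psi u_{xx}^2$ and $-(6\alpha-40\alpha^3)\iint\psi u_x^2$ together with the lower-order $(8\alpha^3-32\alpha^5)\iint\psi u^2$, the mixed term $-u_{xyy}$ yields the crucial $-2\alpha\iint\psi u_y^2$, the term $bu_x$ yields $2b\alpha\iint\psi u^2$, and the boundary terms at $x=0$ are non-negative (local smoothing) and may be discarded. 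The key structural point --- which forces boundary types a) and c) and the threshold $L_0$ --- is the Poincar\'e inequality in $y$: since $u$ vanishes on at least one of $y=0,L$, one has $\int_0^L u^2\,dy\le C_PL^2\int_0^L u_y^2\,dy$, so $-2\alpha\iint\psi u_y^2\le-\tfrac{2\alpha}{C_PL^2}E$. Collecting the $u^2$-coefficients gives $\alpha\big(2b+O(\alpha^2)-\tfrac{2}{C_PL^2}\big)E$, which is strictly negative once $L<L_0$ (with $L_0=+\infty$ when $b\le0$, and $L_0=1/\sqrt{bC_P}$ when $b>0$) and $\alpha\le\alpha_0$ is small.

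It remains to absorb the nonlinearity. Writing $G(u)=\int_0^u\theta g'(\theta)\,d\theta$, one gets $-2\iint\psi u\,g'(u)u_x=2\iint\psi'G(u)=4\alpha\iint\psi G(u)$, and \eqref{1.12} yields $|G(u)|\le c|u|^{p+2}$ with $p+2\le4$. The main obstacle is to bound $\iint\psi|u|^{p+2}$ by the retained dissipation $\iint\psi u_{xx}^2+\iint\psi u_y^2$ and $E$, with a coefficient that is small thanks to the factor $\|u\|_{L_{2,+}}^{p}\le\epsilon_0^{p}$; this is exactly what the weighted anisotropic interpolation inequalities of Lemma~\ref{L2.1} provide, the condition $p<8/3$ guaranteeing a subcritical balance in which the dissipation enters to power $\le1$. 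For $\epsilon_0$ chosen sufficiently small this contribution is absorbed, leaving $E'(t)\le-\alpha\beta E(t)$ and hence \eqref{1.16}. I expect this nonlinear absorption --- keeping track of the weight $\psi$ (rather than $\psi^2$) and matching the $\alpha$-powers of the dissipation against those of the nonlinear term --- to be the technically delicate step, with the non-negative sign of the $x=0$ boundary terms and the vanishing of all $y$-boundary terms in cases a) and c) being the routine but necessary verifications.
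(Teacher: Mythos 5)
Your proposal is correct and follows essentially the same route as the paper's proof: the weighted energy identity \eqref{3.6} with $\rho\equiv 1$ for the $L_2$-decay \eqref{5.1} and with $\rho\equiv e^{2\alpha x}$ for the main estimate, the Steklov inequality \eqref{2.8} in the cases a) and c) supplying the spectral gap that determines $L_0$, and the interpolation inequality of Lemma~\ref{L2.1} (in the form \eqref{3.9}, with the smallness factor $\|u_0\|_{L_{2,+}}^p\le\epsilon_0^p$) to absorb the nonlinear term $4\alpha\iint \bigl(g'(u)u\bigr)^*\psi\,dxdy$ into the dissipation. The only differences are cosmetic constant-tracking (the paper reserves half of the $u_y^2$-dissipation for the Steklov step, arriving at $L_0=\pi/\sqrt{20\varkappa b}$, while your split gives a larger admissible $L_0$, which the statement permits), and the paper justifies the identity for weak solutions by applying Lemma~\ref{L2.7} with $f_0\equiv g'(u)u_x\in L_1(0,T;L_{2,+}^{\psi(x)})$ (via \eqref{4.2}, using $p\le 2$) rather than by re-running the approximation scheme as you suggest.
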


\begin{theorem}[large-time decay of strong solutions]\label{T1.4}
Let the function $g\in C^2(\mathbb R)$ satisfies inequality \eqref{1.12} for $p\in [1,8/3)$ and inequality \eqref{1.14} for $q=p-1$.
Let $L_0=+\infty$ if $b\leq 0$, and if $b>0$ there exists $L_0>0$, such that in both cases for any $L\in (0,L_0)$ there exist $\alpha_0>0$, $\epsilon_0>0$ and $\beta>0$, such that if $u_0\in \widetilde H_+^{1,e^{2\alpha x}}$ for $\alpha\in (0,\alpha_0]$, $\|u_0\|_{L_{2,+}} \leq \epsilon_0$, $u_0(0,y)\equiv 0$, $f\equiv 0$, the corresponding unique solution $u(t,x,y$) to problem \eqref{1.1}--\eqref{1.4} in the cases a) and c) in \eqref{1.4} from the space $X_w^{1,e^{2\alpha x}}(\Pi_T^+)$ $\forall T>0$ satisfies an inequality
\begin{equation}\label{1.17}
\|e^{\alpha x} u(t,\cdot,\cdot)\|^2_{\widetilde H_+^1} \leq c e^{-\alpha\beta t}\qquad \forall t\geq 0,
\end{equation}
where the constant $c$ depends on $b$, $\alpha$, $\beta$, $\|u_0\|_{\widetilde H_+^{1,e^{2\alpha x}}}$ and the properties of the function $g$.
\end{theorem}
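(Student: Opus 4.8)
The plan is to produce a weighted energy functional $E(t)$ equivalent to $\|e^{\alpha x}u(t,\cdot,\cdot)\|^2_{\widetilde H_+^1}$ and to derive a differential inequality $E'(t)\le-\alpha\beta E(t)$, after which \eqref{1.17} follows by integration. Two ingredients are combined: a weighted analogue of the first conservation law in \eqref{1.5}, controlling the zeroth-order part $\|e^{\alpha x}u\|_{L_{2,+}}$, and a weighted analogue of the second conservation law, controlling the full $\widetilde H_+^1$-norm. For the $L_{2,+}$-part I would multiply \eqref{1.1} by $e^{2\alpha x}u$ and integrate over $\Sigma_+$: the dispersive terms $-u_{xxxxx}+u_{xxx}+u_{xyy}$, after integration by parts in which every differentiation of the weight yields a factor $2\alpha$, produce a positive quadratic form $\alpha\iint_{\Sigma_+}e^{2\alpha x}(c_1u_{xx}^2+c_2u_x^2+u_y^2)\,dxdy$, while the boundary contributions at $x=0$ vanish by \eqref{1.3}; those at $y=0,L$ vanish for boundary conditions of type a) and c), which is exactly why decay is asserted only in these two cases. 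The nonlinear term becomes $-2\alpha\iint_{\Sigma_+}e^{2\alpha x}h(u)\,dxdy$ with $h(u)=ug(u)-g^*(u)$ and $|h(u)|\le c|u|^{p+2}$ by \eqref{1.12}; for $p\le2$ this is Theorem~\ref{T1.3}, while for $2<p<8/3$ the same estimate closes because the strong solution carries the additional control of $u_{xxxx},u_{yy}$ and one invokes the anisotropic interpolation of Lemma~\ref{L2.1} together with the smallness $\|u_0\|_{L_{2,+}}\le\epsilon_0$.

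For the $\widetilde H_+^1$-part I would write \eqref{1.1} with $f\equiv0$ in the divergence form $u_t=\partial_x\Lambda$, where $\Lambda=u_{xxxx}-u_{xx}-u_{yy}-bu-g(u)$, and differentiate the weighted energy
\[
E(t)=\iint_{\Sigma_+}e^{2\alpha x}\bigl(u_{xx}^2+u_x^2+u_y^2-2g^*(u)\bigr)\,dxdy
\]
in $t$. Integrating by parts to factor out $u_t$ and then substituting $u_t=\partial_x\Lambda$, the principal contribution is the dissipative term $-\alpha\iint_{\Sigma_+}e^{2\alpha x}\Lambda^2\,dxdy$ arising from $\tfrac12\iint e^{2\alpha x}\partial_x(\Lambda^2)$, accompanied by lower-order terms of the schematic form $\alpha\iint_{\Sigma_+}e^{2\alpha x}(u_{xxx}+u_{xx}+u_x)\,\partial_x\Lambda\,dxdy$ and boundary terms which again vanish at $x=0$ by \eqref{1.3} and at $y=0,L$ under a) and c). The hypotheses $g\in C^2$ and $|g''(u)|\le c|u|^{q}$ with $q=p-1$ from \eqref{1.14} are needed precisely to differentiate $g(u)$ inside $\Lambda$ and to estimate the resulting cubic expressions in $u$ and its derivatives.

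The decay then rests on two coercivity facts. First, the weighted estimate $\iint e^{2\alpha x}\Lambda^2\ge c\,E(t)$: since $\Lambda$ is, modulo the bounded and nonlinear terms, an anisotropic elliptic operator applied to $u$, a weighted Poincar\'e inequality in $y$ (available for a) and c)) controls $E$ by $\iint e^{2\alpha x}\Lambda^2$; when $b>0$ the destabilizing contribution is absorbed by taking the channel width $L<L_0$ small, which is the source of the restriction on $L_0$. Second, the functional $E(t)$ is itself equivalent to $\|e^{\alpha x}u\|^2_{\widetilde H_+^1}$ once the indefinite term $-2\iint e^{2\alpha x}g^*(u)$, bounded by $c\iint e^{2\alpha x}|u|^{p+2}$, is absorbed using smallness and Lemma~\ref{L2.1}. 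Combining these with the $L_{2,+}$-decay yields $E'(t)\le-\alpha\beta E(t)$ and hence \eqref{1.17}. The main obstacle is making these two coercivity steps close simultaneously within the admissible range $p<8/3$, $q=p-1$: the nonlinear flux and energy terms must be dominated by the dissipation $\alpha\iint e^{2\alpha x}\Lambda^2$ uniformly in $t$, which forces careful bookkeeping of the exponents in the weighted anisotropic multiplicative inequalities of Lemma~\ref{L2.1} and repeated use of the a priori smallness propagated from $\|u_0\|_{L_{2,+}}\le\epsilon_0$.
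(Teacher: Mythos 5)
Your first step (the weighted $L_2$-identity giving exponential decay of $\|e^{\alpha x}u\|_{L_{2,+}}$) is essentially the paper's argument, and it is correct for all $p<8/3$ here because strong solutions give $g'(u)u_x\in L_\infty(0,T;L_{2,+}^{e^{2\alpha x}})$. But your second step contains a genuine gap, in two places. First, the claim that the boundary contributions at $x=0$ vanish by \eqref{1.3} is false at the $\widetilde H^1$ level: conditions \eqref{1.3} only kill the traces of $u$ and $u_x$, while the integrations by parts in your computation of $E'(t)$ produce nonvanishing traces of $u_{xx}$, $u_{xxx}$, $u_{xxxx}$ (equivalently of $\Lambda\big|_{x=0}=u_{xxxx}-u_{xx}\big|_{x=0}$); compare the full list of boundary integrals in identity \eqref{2.35}. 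These terms do not decay pointwise in time, so a clean differential inequality $E'(t)\le-\alpha\beta E(t)$ is not available. Second, the coercivity $\iint e^{2\alpha x}\Lambda^2\,dxdy\ge c\,E(t)$ cannot be closed with the smallness you actually have: only $\|u_0\|_{L_{2,+}}\le\epsilon_0$ is small, while $\|u_0\|_{\widetilde H_+^{1,e^{2\alpha x}}}$ may be large, so the nonlinear part of $\Lambda$ (cross terms like $\iint e^{2\alpha x}|u_{xxxx}||g(u)|$) cannot be absorbed at $t=0$; moreover you never establish the uniform bound $\|u\|_{L_\infty(\Pi_\infty^+)}\le c$ that is indispensable to linearize $g$, and which in the paper is derived \emph{before} any weighted $H^1$ estimate.

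The paper's actual mechanism is different and avoids both problems: the $L_2$-level inequality \eqref{5.6} is integrated in time to give not only \eqref{1.16} but the key estimate \eqref{5.7}, namely the $e^{\alpha\beta\tau}$-weighted time-integrability of the dissipation $\alpha\iint(u_{xx}^2+u_x^2+u_y^2+\beta u^2)e^{2\alpha x}dxdy$ \emph{and} of the trace term $\int_0^L u_{xx}^2\big|_{x=0}dy$. Then the unweighted $H^1$-energy inequality \eqref{5.8} (whose right-hand side is exactly that trace term) combined with \eqref{5.7} and \eqref{3.18} yields uniform boundedness and hence \eqref{5.9}; finally the weighted $H^1$-inequality \eqref{5.10} is multiplied by $e^{\alpha\beta t}$, and every right-hand side term --- including the non-decaying boundary term and the nonlinear terms tamed by \eqref{5.9} --- is integrable in time by \eqref{5.7}, so the decay of the $H^1$-energy is \emph{inherited} from the $L_2$-level dissipation rather than proved via a self-contained coercive inequality. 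To repair your proposal you would need to replace the claimed coercivity of $\iint e^{2\alpha x}\Lambda^2$ by precisely this Gronwall-type bookkeeping against \eqref{5.7}, and to account explicitly for the $x=0$ traces that \eqref{1.3} does not eliminate.
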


\begin{remark}\label{R1.3}
Now we can describe in more details the results from \cite{L14}. There (in the notation of the present paper) the initial-boundary value problem for the homogeneous Kawahara--Zakharov--Kuznetsov equation with $b=1$ was considered on the half-strip $\Sigma_+$ with initial and boundary conditions \eqref{1.2}, \eqref{1.3} and \eqref{1.4} in the case a). The weighted at $+\infty$ spaces with the exponential weights $\psi(x) \equiv e^{2\alpha x}$, $0< \alpha <\sqrt{3/20}$, were used. It was assumed that $u_0$, $u_{0xx}$, $u_{0yy}$, $\Delta u_{0x} - \partial_x^5 u_0$ lied in the space $L_{2,+}^{\psi(x)}$. Then existence and uniqueness of global regular solutions (more regular than the ones, constructed in Theorems \ref{T1.1} and \ref{T1.2}) were proved. Results on large-time exponential decay of small solutions were also established.
\end{remark}

The paper is organized as follows. Section~\ref{S2} consists of certain preliminaries concerning corresponding linear results and interpolating inequalities. Section~\ref{S3} is devoted to existence. Results on uniqueness and continuous dependence are proved in Section~\ref{S4}. Decay of solutions is studied in Section~\ref{S5}. The proofs of certain auxiliary interpolating inequalities are shifted to Appendix~\ref{A1}.

\section{Preliminaries}\label{S2}

From now on we drop limits of integration in integrals with respect to $x$ and $y$ over the whole half-strip $\Sigma_+$ and with respect to $x$ over the half-line $\mathbb R_+$.

Introduce certain additional notation.

Let $\eta(x)$ denotes a cut-off function, namely, $\eta$ is an infinitely smooth non-decreasing function on $\mathbb R$ such that $\eta(x)=0$ for $x\leq 0$, $\eta(x)=1$ for $x\geq 1$, $\eta(x)+\eta(1-x) \equiv 1$.

Let $\EuScript S_{exp}(\overline{\Sigma}_+)$ denotes a space of infinitely smooth functions $\varphi(x,y)$ on $\overline{\Sigma}_+$,  such that $e^{n x}|\partial^\nu\varphi(x,y)|\leq c(n,\nu)$ for any $n$, multi-index $\nu$, $(x,y)\in \overline{\Sigma}_+$.

Let $\widetilde{\EuScript S}_{exp}(\overline{\Sigma}_+)$ be a subspace of $\EuScript S_{exp}(\overline{\Sigma}_+)$, consisting of functions, on the boundaries $y=0$, $y=L$ verifying the same conditions as in the definition of the space $\widetilde{\EuScript S}(\overline{\Sigma}_+)$. The spaces $\widetilde{\EuScript S}_{exp}(\overline{\Sigma}_+)$ are obviously dense in $\widetilde H_+^k$ for any $k$.

The following interpolating inequality for weighted anisotropic Sobolev spaces is crucial for the study.

\begin{lemma}\label{L2.1}
Let $\psi_1(x)$, $\psi_2(x)$ be two admissible weight functions, either $m=0$, $q\in [2,+\infty]$ or $m=1$, $q\in [2,6]$,
\begin{equation}\label{2.1}
s = s(m,q) = \frac{2m+3}8 - \frac3{4q}.
\end{equation}
Then for every function $\varphi(x,y)$ satisfying $\bigl(|\varphi_{xx}| + |\varphi_y| +|\varphi|\bigr)\psi_1^{1/2}(x)\in L_{2,+}$, $\varphi\psi_2^{1/2}(x)\in L_{2,+}$, $\varphi(0,y)\equiv 0$, the following inequality holds:
\begin{equation}\label{2.2}
\bigl\| \partial_x^m \varphi\psi_1^s\psi_2^{1/2-s}\bigr\|_{L_{q,+}} \leq c
\bigl\|\bigl(|\varphi_{xx}| + |\varphi_y| +|\varphi|\bigr)\psi_1^{1/2}\bigr\|^{2s}_{L_{2,+}}
\bigl\|\varphi\psi_2^{1/2}\bigr\|_{L_{2,+}}^{1-2s}, 
\end{equation}
where the constant $c$ depends on $L$, $m$, $q$ and the properties of the functions $\psi_j$; if, in addition, $\varphi\big|_{y=0}=0$ or $\varphi\big|_{y=L}=0$ then this constant is uniform with respect to $L$.
\end{lemma}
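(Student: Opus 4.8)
My plan is to fix the exponent \(s\) by dimensional analysis, reduce the weighted inequality \eqref{2.2} to an unweighted one by freezing the weights on unit intervals, and then reassemble by a discrete Hölder inequality. The admissibility conditions \eqref{1.7}--\eqref{1.8} are tailored exactly to these reductions: \eqref{1.7} lets weights commute past derivatives, while \eqref{1.8}, newly imposed here, is precisely the "comparable on unit intervals'' property needed for the freezing.

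First I would check that \(s=s(m,q)\) in \eqref{2.1} is the only exponent compatible with scaling. The two leading terms \(\varphi_{xx}\) and \(\varphi_y\) scale identically under the anisotropic dilation \((x,y)\mapsto(\lambda x,\lambda^2 y)\), which is the natural scaling when one \(y\)-derivative is balanced against two \(x\)-derivatives. Tracking the \(\lambda\)-powers of \(\|\partial_x^m\varphi\|_{L_q}\sim\lambda^{3/q-m}\), of \(\|\varphi_{xx}\|_{L_2}\) and \(\|\varphi_y\|_{L_2}\) (both \(\sim\lambda^{-1/2}\)), and of \(\|\varphi\|_{L_2}\sim\lambda^{3/2}\), and requiring both sides of \eqref{2.2} to scale equally, gives \(3/q-m=3/2-4s\), i.e.\ \(s=(2m+3)/8-3/(4q)\). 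The same bookkeeping shows \(s\in[0,1/2]\) on the stated ranges, the endpoints \(m=0,q=2\) (\(s=0\)) and \(m=1,q=6\) (\(s=1/2\)) being pure embeddings, and \(q=\infty\) for \(m=0\), \(q=6\) for \(m=1\) being the critical Sobolev exponents of the anisotropic space controlling \(\varphi_{xx},\varphi_y,\varphi\).

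Next I would establish the unweighted version (\(\psi_1\equiv\psi_2\equiv1\)) of \eqref{2.2} on the half-strip — the auxiliary estimate I would defer to the appendix. Extending \(\varphi\) by odd reflection in \(x\) (this is where \(\varphi(0,y)\equiv0\) is used: it makes the extension continuous, so that \(\tilde\varphi_{xx},\tilde\varphi_y,\tilde\varphi\in L_2(\mathbb R\times(0,L))\) with controlled norms) reduces matters to the full strip. There the anisotropic Gagliardo--Nirenberg inequality with exponent \(2s\) follows from one-dimensional interpolation of order \(2\) in \(x\) and order \(1\) in \(y\) combined through Hölder, the \(y\)-boundary conditions \eqref{1.4} being absorbed by reflection or a Fourier series in \(y\); the two critical endpoints are the corresponding embeddings, the \(L_\infty\) one justified by \(\iint(1+\xi^4+\zeta^2)^{-1}\,d\xi\,d\zeta<+\infty\).

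Finally, and this is the crux of the lemma itself, I would insert the weights by localization. Covering \(\mathbb R_+\) by the unit intervals \([k,k+1]\) and setting \(Q_k=[k,k+1]\times(0,L)\), condition \eqref{1.8} lets me replace \(\psi_1(x),\psi_2(x)\) on \(Q_k\) by the constants \(\psi_1(k),\psi_2(k)\) up to fixed factors (here \(0\le s\le1/2\) guarantees both exponents \(s\) and \(1/2-s\) are nonnegative, so the freezing goes the right way). Applying the unweighted inequality on each \(Q_k\) and using \(\psi_1^{s}=(\psi_1^{1/2})^{2s}\), \(\psi_2^{1/2-s}=(\psi_2^{1/2})^{1-2s}\), the weight powers recombine automatically and I get
\[
\bigl\|\partial_x^m\varphi\,\psi_1^s\psi_2^{1/2-s}\bigr\|_{L_q(Q_k)}\le c\,\tilde G_k^{2s}\tilde H_k^{1-2s},\qquad
\tilde G_k=\bigl\|(|\varphi_{xx}|+|\varphi_y|+|\varphi|)\psi_1^{1/2}\bigr\|_{L_2(Q_k)},\quad \tilde H_k=\bigl\|\varphi\psi_2^{1/2}\bigr\|_{L_2(Q_k)}.
\]
Raising to the \(q\)-th power, summing over \(k\), and applying a discrete Hölder inequality with conjugate exponents \(r,r'\) chosen so that \(\tilde G_k\) enters to a power \(2sqr\ge2\) and \(\tilde H_k\) to a power \((1-2s)qr'\ge2\) — possible exactly because the budget \(sq+(1-2s)q/2=q/2\) is \(\ge1\), i.e.\ \(q\ge2\) — bounds \(\sum_k\tilde G_k^{2sq}\tilde H_k^{(1-2s)q}\) by \((\sum_k\tilde G_k^2)^{sq}(\sum_k\tilde H_k^2)^{(1-2s)q/2}\) through \(\ell^2\hookrightarrow\ell^\rho\) for \(\rho\ge2\); for \(q=\infty\) the supremum replaces the sum. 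Since the \(Q_k\) tile \(\Sigma_+\), these sums are precisely the squared \(L_2\)-norms on the right of \eqref{2.2}, and taking the \(q\)-th root concludes. I expect this summation to be the main obstacle: it is where the precise value of \(s\) and the restriction \(q\ge2\) are consumed, and it is the reason \eqref{1.8} had to be added to the notion of admissible weight. The final \(L\)-uniformity claim, valid when \(\varphi\) vanishes on one of the lines \(y=0,L\), would follow by reflecting in \(y\) across that line (equivalently, invoking a Friedrichs inequality there), which removes the dependence of the constant on the lowest transverse frequency \(\sim\pi/L\).
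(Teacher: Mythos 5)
Your proposal is correct in substance, but it reaches \eqref{2.2} by a genuinely different mechanism than the paper for the cases beyond the $L_\infty$ endpoint. The paper localizes only once: it applies the unit-box inequality \eqref{2.3} on $Q_n$, freezes the weights there via \eqref{1.8}, and takes a supremum to get the case $m=0$, $q=\infty$; every other case is then handled \emph{globally} in the weighted norms --- $m=0$, $2<q<\infty$ by pointwise H\"older against the weighted $L_\infty$ bound, $m=1$, $q=2$ by an integration by parts in which the hypothesis $\varphi(0,y)\equiv 0$ kills the boundary term at $x=0$ and \eqref{1.7} absorbs the term with $(\psi_1^{1/2}\psi_2^{1/2})'$, $m=1$, $q=6$ by applying the global embedding \eqref{2.4} to $\varphi\psi_1^{1/2}$, and $m=1$, $q\in(2,6)$ by H\"older between the two. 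You instead localize \emph{every} case to the unit boxes $Q_k$, freeze both weights by \eqref{1.8}, and reassemble by discrete H\"older together with $\ell^2\hookrightarrow\ell^\rho$, $\rho\geq 2$; your budget computation $sq+(1-2s)q/2=q/2\geq 1$ is exactly the feasibility condition and correctly identifies where $q\geq 2$ is consumed, and the frozen exponents $s,\,1/2-s\in[0,1/2]$ recombine as you say. Two caveats. First, what you actually need on each $Q_k$ is the \emph{unit-box} multiplicative inequality with constant uniform in $k$, not a restriction of the half-strip inequality (inequalities do not restrict to subdomains); this works precisely because $|\varphi|$ is included in the first factor --- the pure two-derivative form fails on a box (take $\varphi=x$) --- and your $\tilde G_k$ does include it, but your appendix lemma must be stated for boxes, which also makes your odd-reflection step for the global half-strip estimate superfluous. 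A notable byproduct is that your route then never uses $\varphi(0,y)\equiv 0$, so you prove slightly more than the lemma states, whereas in the paper this hypothesis is essential to the $m=1$, $q=2$ integration by parts (compare Lemma \ref{L2.2}, where without it an additive lower-order term appears). Second, your $L$-uniformity argument is the weakest point: odd reflection across the vanishing edge only doubles the strip to width $2L$ and does not by itself remove the $L$-dependence (which degenerates as $L\to 0$ through the low transverse frequencies); the clean implementation of your idea is the paper's, namely extension by zero across that edge to $(n,n+1)\times\mathbb R_+$ and the quarter-plane, after which the box inequalities are manifestly $L$-independent. What each approach buys: the paper's global integration by parts is shorter and exploits the boundary condition directly, while your local-freeze-and-resum scheme is more modular, treats all $(m,q)$ by one summation template, and makes transparent why \eqref{1.8} had to be added to the definition of admissible weights.
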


\begin{proof}
Without loss of generality, assume that $\varphi$ is a smooth decaying at $+\infty$ function (for example, $\varphi\in \EuScript S_{exp}(\overline{\Sigma}_+)$).

The proof is based on two interpolating inequalities for anisotropic Sobolev spaces following from \cite[Theorem~10.1]{BIN}. If $Q_n =(n,n+1) \times (0,L)$, then
\begin{equation}\label{2.3}
\|f\|_{L_\infty(Q_n)} \leq c(L)\Bigl(\iint_{Q_n} \bigl(f^2_{xx} +f_y^2 +f^2\bigr)\, dxdy\Bigr)^{3/8} \Bigl(\iint_{Q_n} f^2 \,dxdy\Bigr)^{1/8};
\end{equation}
and
\begin{equation}\label{2.4}
\|f_x\|_{L_6(\Sigma_+)} \leq c(L) \Bigl(\iint \bigl(f^2_{xx} +f_y^2 +f^2\bigr)\, dxdy\Bigr)^{1/2}.
\end{equation}

Note that property \eqref{1.8} implies that
$$
\sup\limits_{x\in (n,n+1)} \psi_j(x) \leq c \inf\limits_{x\in (n,n+1)} \psi_j(x).
$$
Then \eqref{2.3} yields that
\begin{multline*}
\sup\limits_{(x,y)\in Q_n}|\varphi| \psi_1^{3/8} \psi_2^{1/8} \leq c\inf\limits_{x\in (n,n+1)} \psi_1^{3/8} \inf\limits_{x\in (n,n+1)}\psi_2^{1/8} \cdot
\|\varphi\|_{L_\infty(Q_n)}\\  \leq
c(L) \Bigl(\iint_{Q_n} \bigl(\varphi^2_{xx} +\varphi_y^2 +\varphi^2\bigr)\psi_1\, dxdy\Bigr)^{3/8} \Bigl(\iint_{Q_n} \varphi^2\psi_2 \,dxdy\Bigr)^{1/8},
\end{multline*}
whence \eqref{2.2} in the case $m=0$, $q=+\infty$ follows. As a result, for $q\in (2,+\infty)$, $s=s(0,q)$
\begin{multline*}
\bigl\| \varphi\psi_1^s\psi_2^{1/2-s}\bigr\|_{L_{q,+}} \leq \left(\bigl\|\varphi\psi_1^{3/8}\psi_2^{1/8}\bigr\|_{L_{\infty,+}}\right)^{(q-2)/q}
\Bigl(\iint \varphi^2\psi_2\,dxdy\Bigr)^{1/q} \\ \leq
c(L) \Bigl(\iint \bigl(\varphi^2_{xx} +\varphi_y^2 +\varphi^2\bigr)\psi_1\, dxdy\Bigr)^{s} \Bigl(\iint \varphi^2\psi_2 \,dxdy\Bigr)^{1/2-s},
\end{multline*}
since
$$
s= \frac38(1-\frac2q),\quad \frac12 -s = \frac18(1-\frac2q) +\frac1q.
$$

Next, integration by parts yields an equality (here the assumption $\varphi(0,y)\equiv 0$ is used)
$$
\iint \varphi_x^2 \psi_1^{1/2}\psi_2^{1/2}\, dxdy =
-\iint \varphi_{xx}\psi_1^{1/2}\cdot \varphi\psi_2^{1/2}\,dxdy 
-\iint\varphi\varphi_x(\psi_1^{1/2}\psi_2^{1/2})'\,dxdy.
$$
Here property \eqref{1.7} implies that
\begin{multline*}
\Bigl|\iint\varphi\varphi_x(\psi_1^{1/2}\psi_2^{1/2})'\,dxdy\Bigr|  \\ \leq c \Bigl(\iint \varphi_x^2 \psi_1^{1/2}\psi_2^{1/2}\, dxdy \Bigr)^{1/2}  
\Bigl(\iint \varphi^2 \psi_1\, dxdy \Bigr)^{1/4}\Bigl(\iint \varphi^2 \psi_2\, dxdy \Bigr)^{1/4},
\end{multline*} 
whence \eqref{2.2} in the case $m=1$, $q=2$ follows.

In the case $m=1$, $q=6$ estimate \eqref{2.4} and property\eqref{1.7} yield that (here $s(1,6) = 1/2$)
\begin{multline*}
\bigl\|\varphi_x\psi_1^{1/2}\bigr\|_{L_{6,+}} \leq \bigl\| \partial_x (\varphi\psi_1^{1/2})\bigr\|_{L_{6,+}} + 
c\bigl\| \varphi\psi_1^{1/2}\bigr\|_{L_{6,+}} \\ \leq
c(L) \Bigl(\iint \bigl(\varphi^2_{xx} + \varphi_x^2 +\varphi_y^2 +\varphi^2\bigr)\psi_1\, dxdy\Bigr)^{1/2} + 
c\bigl\| \varphi\psi_1^{1/2}\bigr\|_{L_{6,+}},
\end{multline*}
and the already obtained estimate \eqref{2.2} for $m=1$, $q=2$ and $m=0$, $q=6$ in the case $\psi_2\equiv \psi_1$ provide the desired result. In turn, if $m=1$, $q\in (2,6)$, $s=s(1,q)$
\begin{multline*}
\bigl\| \varphi_x\psi_1^s\psi_2^{1/2-s}\bigr\|_{L_{q,+}}  = 
\Bigl(\iint \bigl(|\varphi_x|\psi_1^{1/2}\bigr)^{3(q-2)/2}\cdot \bigl(|\varphi_x|(\psi_1\psi_2)^{1/4}\bigr)^{(6-q)/2}\,dxdy \Bigr)^{1/q} \\ \leq
\Bigl(\iint \varphi_x^6 \psi_1^3\,dxdy\Bigr)^{(q-2)/(4q)} \Bigl(\iint \varphi_x^2 \psi_1^{1/2} \psi_2^{1/2} \,dxdy\Bigr)^{(6-q)/(4q)},
\end{multline*}
and the already obtained estimate \eqref{2.2} for $m=1$, $q=6$ and $q=2$ provide the desired result, since
$$
s= 6s(1,6) \frac{q-2}{4q} +2s(1,2) \frac{6-q}{4q},\quad \frac12 -  s = \bigl(1 -2 s(1,2)\bigr) \frac{6-q}{4q}.
$$

Finally, if for example $\varphi\big|_{y=L}=0$, extend the function $\varphi$ by zero to the quarter-plane $\mathbb R_+ \times \mathbb R_+$ and carry out the same argument with the use of \eqref{2.3} for $(n,n+1)\times \mathbb R_+$ and \eqref{2.4} for the quarter-plane, then estimate \eqref{2.2} becomes uniform with respect to $L$.
\end{proof}

In the following lemma the assumption $\varphi(0,y)\equiv 0$ is not used.

\begin{lemma}\label{L2.2}
Let $\psi(x)$ be an admissible weight function, then there exists a constant $c$, depending on the properties of the function $\psi$, such that for any function $\varphi(x,y)$, verifying $\varphi_{xx}, \varphi \in L_{2,+}^{\psi(x)}$, the following inequalities hold:
\begin{gather}\label{2.5}
\iint \varphi_x^2\psi \,dxdy \leq c \Bigl(\iint \varphi_{xx}^2 \psi\, dxdy\Bigr)^{1/2} \Bigl(\iint \varphi^2 \psi\, dxdy\Bigr)^{1/2} +
c\iint \varphi^2 \psi\, dxdy, \\
\label{2.6}
\int_0^L \varphi_x^2\big|_{x=0}\,dy \leq c \Bigl(\iint \varphi_{xx}^2\psi\,dxdy\Bigr)^{3/4} \Bigl(\iint \varphi^2\psi\,dxdy\Bigr)^{1/4}+
c\iint \varphi^2\psi\,dxdy.
\end{gather}
\end{lemma}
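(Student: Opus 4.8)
The plan is to reduce to smooth functions decaying at $+\infty$ (by density of $\EuScript S_{exp}(\overline{\Sigma}_+)$) and to derive \eqref{2.5} and \eqref{2.6} \emph{together}, since the boundary term produced by integration by parts at $x=0$ ties them to one another. Throughout write $A=\iint\varphi_x^2\psi\,dxdy$, $B=\iint\varphi_{xx}^2\psi\,dxdy$, $C=\iint\varphi^2\psi\,dxdy$, and $T_0=\int_0^L\varphi^2\big|_{x=0}\,dy$, $T_1=\int_0^L\varphi_x^2\big|_{x=0}\,dy$. Integrating by parts in $x$ gives
\[
A=-\psi(0)\int_0^L\varphi\varphi_x\big|_{x=0}\,dy-\iint\varphi\varphi_{xx}\psi\,dxdy-\iint\varphi\varphi_x\psi'\,dxdy,
\]
and estimating the last two integrals by Cauchy--Schwarz, using $|\psi'|\le c\psi$ from \eqref{1.7}, yields
\[
A\le \psi(0)\,T_0^{1/2}T_1^{1/2}+B^{1/2}C^{1/2}+cA^{1/2}C^{1/2}.
\]
Since $\varphi(0,y)$ is not assumed to vanish here, the boundary term $T_0^{1/2}T_1^{1/2}$ cannot be dropped, and controlling it is the heart of the matter.

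Next I would bound the two traces by means of the cut-off $\eta(1-x)$, which equals $1$ at $x=0$ and is supported in $\{x\le 1\}$. Writing $\varphi^2\big|_{x=0}=-\int_0^\infty\partial_x\bigl(\varphi^2\eta(1-x)\bigr)\,dx$ and $\varphi_x^2\big|_{x=0}=-\int_0^\infty\partial_x\bigl(\varphi_x^2\eta(1-x)\bigr)\,dx$, expanding the derivatives, integrating over $y$, and noting that by \eqref{1.8} the weight $\psi$ is bounded between positive constants on $[0,1]$ (so the weight may be freely inserted on the support of the cut-off), I obtain
\[
T_0\le cA^{1/2}C^{1/2}+cC,\qquad T_1\le cA^{1/2}B^{1/2}+cA.
\]

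Finally I would substitute these trace bounds into the product $T_0^{1/2}T_1^{1/2}$ appearing in the estimate for $A$. The key observation is that this product is dominated by $A^{1/2}(BC)^{1/4}$ together with the terms $A^{3/4}C^{1/4}$, $A^{1/4}B^{1/4}C^{1/2}$ and $A^{1/2}C^{1/2}$, each of which is \emph{subcritical} in $A$ (the exponent of $A$ is strictly below $1$). Hence repeated use of Young's inequality, together with the elementary bound $B^{1/3}C^{2/3}\le B^{1/2}C^{1/2}+C$ (which follows by comparing the cases $B\ge C$ and $B\le C$), lets me absorb every $A$-contribution into the left-hand side and leaves only multiples of $B^{1/2}C^{1/2}$ and $C$; this proves \eqref{2.5}. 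Inserting \eqref{2.5} into the bound $T_1\le cA^{1/2}B^{1/2}+cA$ and simplifying by the same elementary inequality then gives $T_1\le cB^{3/4}C^{1/4}+cC$, which is \eqref{2.6}.

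I expect the main obstacle to be precisely this boundary term at $x=0$. A naive direct estimate of $\int_0^L\varphi\varphi_x\big|_{x=0}\,dy$ through the cut-off representation produces a term $cA$ whose constant need not be smaller than $1$, and such a term cannot be absorbed into $A$ on the left. The resolution is to keep the two traces separate, estimate the boundary integral by $T_0^{1/2}T_1^{1/2}$, and exploit the fact that after substituting the trace bounds this quantity grows only like $A^{1/2}$ times lower-order factors, which is exactly what makes the absorption possible.
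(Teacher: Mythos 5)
Your proof is correct and takes essentially the same route as the paper: the identical integration-by-parts identity for $A=\iint\varphi_x^2\psi\,dxdy$, trace control by one-dimensional interpolation (your cutoff representation of $\varphi^2\big|_{x=0}$ and $\varphi_x^2\big|_{x=0}$ is a localized version of the paper's $\sup_{x\geq0}f^2\leq 2\|f'\|_{L_2}\|f\|_{L_2}$, yielding the same bounds up to harmless extra lower-order terms), followed by Young absorption of the subcritical-in-$A$ terms. The paper reaches the single boundary bound $cB^{1/4}A^{1/2}C^{1/4}$ in one step, but the mechanism, exponents, and final absorptions are the same.
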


\begin{proof}
The proof is based on an elementary inequality
$$
\sup\limits_{x\geq 0} f^2(x) \leq 2\Bigl(\int (f')^2\,dx\Bigr)^{1/2} \Bigl(\int f^2\,dx\Bigr)^{1/2}
$$
and an equality
$$
\int (f')^2\psi \,dx = -(f' f\psi)\big|_{0} - \int f'' f \psi\, dx - \int f' f \psi'\, dx.
$$
Then
\begin{multline*}
\int_0^L |\varphi_x\varphi\psi|\big|_{x=0} \,dy \leq 2\psi(0)\Bigl( \int_0^L \sup\limits_{x\geq 0} \varphi_x^2 \,dy\Bigr)^{1/2} 
\Bigl( \int_0^L \sup\limits_{x\geq 0} \varphi^2 \,dy\Bigr)^{1/2}  \\ \leq
c \Bigl(\iint \varphi_{xx}^2\psi\,dxdy\Bigr)^{1/4} \Bigl(\iint \varphi_{x}^2\psi\,dxdy\Bigr)^{1/2} \Bigl(\iint \varphi^2\psi\,dxdy\Bigr)^{1/4}
\end{multline*}
and as a result with the use of \eqref{1.7}
\begin{multline*}
\iint \varphi_x^2\psi \,dxdy   \leq c \Bigl(\iint \varphi_{xx}^2\psi\,dxdy\Bigr)^{1/4} \Bigl(\iint \varphi_{x}^2\psi\,dxdy\Bigr)^{1/2} \Bigl(\iint \varphi^2\psi\,dxdy\Bigr)^{1/4} \\+
\Bigl(\iint \varphi_{xx}^2\psi\,dxdy\Bigr)^{1/2} \Bigl(\iint \varphi^2\psi\,dxdy\Bigr)^{1/2} + 
c\Bigl(\iint \varphi_{x}^2\psi\,dxdy\Bigr)^{1/2} \Bigl(\iint \varphi^2\psi\,dxdy\Bigr)^{1/2},
\end{multline*}
whence \eqref{2.5} follows. Finally, 
$$
\int_0^L \varphi_x^2\big|_{x=0}\,dy \leq c \Bigl(\iint \varphi_{xx}^2\psi\,dxdy\Bigr)^{1/2} \Bigl(\iint \varphi_x^2\psi\,dxdy\Bigr)^{1/2}
$$
and \eqref{2.6} follows from \eqref{2.5}.
\end{proof}

Introduce also anisotropic Sobolev spaces with smoothness properties only with respect to $x$. Let $H_+^{(k,0)}$ be a space of functions $\varphi(x,y)\in L_{2,+}$ such that $\partial_x^j \varphi\in L_{2,+}$ for $j\leq k$ endowed with the natural norm $\|\varphi\|_{H_+^{(k,0)}} = \Bigl(\sum\limits_{j=0}^k \|\partial_x^j \varphi\|^2_{L_{2,+}}\Bigr)^{1/2}$. Let $H_+^{(-m,0)} =\bigl\{\varphi(x,y) = \sum\limits_{j=0}^m \varphi_j(x,y): \forall\varphi_j \in L_{2,+}\bigr\}$, endowed with the natural norm $\|\varphi\|_{H_+^{(-m,0)}} = \Bigl(\sum\limits_{j=0}^m \|\varphi_j\|^2_{L_{2,+}}\Bigr)^{1/2}$.

\begin{lemma}\label{L2.3}
If $\varphi\in H_+^{(k,0)}$, $\partial_x^{n}\varphi \in H_+^{(-m,0)}$ for $n\geq k+m$, then $\partial_x^{k+1}\varphi \in L_{2,+}$ and for certain constant $c=c(k,m,n)$
\begin{equation}\label{2.7}
\|\partial_x^{k+1} \varphi\|_{L_{2,+}} \leq c\bigl(\|\partial_x^{n}\varphi\|_{H_+^{(-m,0)}} + \|\varphi\|_{H_+^{(k,0)}}\bigr).
\end{equation}
\end{lemma}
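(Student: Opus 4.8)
The plan is to regard $y\in(0,L)$ as a passive variable, since \eqref{2.7} involves only $x$-derivatives, and to prove the estimate by passing to the partial Fourier transform in $x$, where it collapses to a single pointwise inequality between Fourier symbols. To do this on the whole line I would first continue $\varphi$ in the variable $x$. The positive-order quantity $\|\varphi\|_{H_+^{(k,0)}}$ is preserved by any bounded reflection (Hestenes/Seeley) extension $E$; for the negative-order quantity I would extend each $\varphi_j$ in the representation $\partial_x^n\varphi=\sum_{j=0}^m\partial_x^j\varphi_j$ by a common $L_2$-bounded extension and verify that the resulting continuation of $\partial_x^n\varphi$ stays in $H^{-m}(\mathbb R)$. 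Producing a \emph{single} extension that simultaneously controls $\|E\varphi\|_{H^k(\mathbb R)}$ by $\|\varphi\|_{H_+^{(k,0)}}$ and $\|\partial_x^n E\varphi\|_{H^{-m}(\mathbb R)}$ by $\|\partial_x^n\varphi\|_{H_+^{(-m,0)}}+\|\varphi\|_{H_+^{(k,0)}}$ is the main technical obstacle, caused by the fact that $E$ does not commute with $\partial_x^n$; I expect to handle it with a Seeley extension, which is bounded on $H^s(\mathbb R_+)$ across the whole relevant range of $s$, writing $\partial_x^n E\varphi=E(\partial_x^n\varphi)+R\varphi$ with a remainder $R\varphi$ supported in $\{x<0\}$ and dominated by $\|\varphi\|_{H_+^{(k,0)}}$.

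The heart of the matter is then an elementary symbol inequality. Writing $\widehat\varphi(\xi,y)$ for the Fourier transform in $x$ and setting $P_\ell(\xi)=\sum_{j=0}^\ell|\xi|^{2j}$, Plancherel gives $\|\varphi\|_{H^k(\mathbb R\times(0,L))}^2\sim\iint P_k(\xi)\,|\widehat\varphi|^2\,d\xi\,dy$, while the definition of $H^{(-m,0)}$ as the set of sums $\sum_{j=0}^m\partial_x^j\varphi_j$ with the infimum norm gives, after minimizing $\sum_j\|\varphi_j\|_{L_2}^2$ by Cauchy--Schwarz, $\|\partial_x^n\varphi\|_{H^{(-m,0)}}^2\sim\iint \frac{|\xi|^{2n}}{P_m(\xi)}\,|\widehat\varphi|^2\,d\xi\,dy$. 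Hence it suffices to establish the pointwise bound
\begin{equation*}
|\xi|^{2(k+1)} \le C\Bigl(\tfrac{|\xi|^{2n}}{P_m(\xi)} + P_k(\xi)\Bigr)\qquad\forall\,\xi\in\mathbb R,
\end{equation*}
which I would prove by splitting frequencies: for $|\xi|\le 1$ the left side is $\le 1\le P_k(\xi)$; for $|\xi|\ge 1$ one has $P_m(\xi)\le(m+1)|\xi|^{2m}$, so $\tfrac{|\xi|^{2n}}{P_m(\xi)}\ge(m+1)^{-1}|\xi|^{2(n-m)}\ge(m+1)^{-1}|\xi|^{2(k+1)}$, the last inequality being exactly where the hypothesis relating $n$ to $k+m$ is used. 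Multiplying by $|\widehat\varphi|^2$, integrating in $(\xi,y)$, and restricting the extended function back to $\mathbb R_+$ then yields \eqref{2.7}.

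Finally, since the hypotheses furnish only $\varphi\in H_+^{(k,0)}$ and $\partial_x^n\varphi\in H_+^{(-m,0)}$, and not a priori that $\partial_x^{k+1}\varphi\in L_{2,+}$, I would make the argument rigorous by regularization: apply the derived bound to the frequency-truncated (or mollified) approximations $\varphi_\varepsilon$, for which every quantity is finite and the symbol computation is literal, obtain a bound on $\|\partial_x^{k+1}\varphi_\varepsilon\|_{L_{2,+}}$ uniform in $\varepsilon$, and pass to the weak limit to conclude simultaneously that $\partial_x^{k+1}\varphi\in L_{2,+}$ and that \eqref{2.7} holds. An equivalent route avoiding the extension would be to bound the forward difference quotients $\|D_h\partial_x^k\varphi\|_{L_{2,+}}$ uniformly in $h>0$ by the same symbol estimate and then let $h\to 0$. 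In either case the symbol inequality and the limiting argument are routine; the only genuinely delicate point is the construction of the norm-compatible extension described in the first paragraph.
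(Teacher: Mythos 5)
Your $y$-slicing, the full-line Plancherel computation, and the symbol inequality are all fine and in fact coincide with the paper's Lemma~\ref{LA.1}, whose proof rests on exactly the inequality $|\xi|^{n} \leq c\bigl(|\xi|^k(1+|\xi|^2)^{-m/2}+1\bigr)$. But the step you yourself flag as the delicate one --- a single extension $E$ for which $\|\partial_x^n E\varphi\|_{H^{-m}(\mathbb R)}$ is controlled by the data --- is a genuine gap, and the mechanism you propose for it fails. Your claimed decomposition $\partial_x^n E\varphi = E(\partial_x^n\varphi)+R\varphi$ with $\|R\varphi\|$ dominated by $\|\varphi\|_{H_+^{(k,0)}}$ is false for reflection-type extensions: take $k=0$, $m=1$, $n=2$ and the even extension; then $\partial_x^2 E\varphi$ contains the singular term $2\varphi'(0)\delta_0$, with $\delta_0\in H^{-1}(\mathbb R)$, and choosing $\varphi_\nu(x)=\nu^{-1}\sin(\nu x)\chi(x)$ (smooth cutoff $\chi$) gives $\|\varphi_\nu\|_{L_{2,+}}\to 0$ while $\varphi_\nu'(0)=1$, so no such bound on $R\varphi$ can hold; the coefficient $\varphi'(0)$ is a trace that the hypotheses do not control. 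A genuine Seeley extension removes the deltas for smooth $\varphi$ but does not remove the obstruction, which is a duality mismatch: on slices, $\partial_x^n\varphi\in H^{-m}(\mathbb R_+)=\bigl(H_0^m(\mathbb R_+)\bigr)^*$ pairs only against test functions vanishing to order $m$ at $x=0$, whereas estimating $\langle\partial_x^n E\varphi,\psi\rangle=(-1)^n\int E\varphi\,\psi^{(n)}$ for $\psi$ straddling the origin requires pairing $\partial_x^n\varphi$ against $\psi|_{\mathbb R_+}$, which does not vanish there. Relatedly, knowing a distribution lies in $H^{-m}$ of each open half-line does not place it in $H^{-m}(\mathbb R)$ with comparable norm, since terms $\delta_0,\dots,\delta_0^{(m-1)}$ are invisible to both restrictions; and the cutoff commutators in $\partial_x^n E\varphi$ involve $\varphi^{(j)}$ with $k<j<n$, which are again not controlled by $\|\varphi\|_{H_+^{(k,0)}}$ except through the conclusion itself --- circular. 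Your regularization and difference-quotient fallbacks do not bypass this, because the symbol estimate lives on the full line, so the mollified functions need the very extension in question.

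The paper's Lemma~\ref{LA.3} is built precisely to avoid reflecting across the boundary. It splits $f=f\eta_0+f\eta_1$ with a cutoff $\eta_0$ vanishing to infinite order at $x=0$: the piece $f\eta_0$ is extended by \emph{zero} (the one extension compatible with negative norms, since $\eta_0\psi\in H_0^m(\mathbb R_+)$ for any $\psi\in H^m(\mathbb R)$, cf.\ \eqref{A.8}), and all Leibniz commutators involve only derivatives of $f$ up to order $n_{A.3}-1$, absorbed into the positive-order norm --- this is exactly why the hypothesis $\varphi\in H_+^{(k,0)}$, and not merely $\varphi\in L_{2,+}$, appears in Lemma~\ref{L2.3}. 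The near-boundary piece $f\eta_1$ is then handled on the bounded interval $(0,1)$ by the separate duality argument of Lemma~\ref{LA.2}, which manufactures $H_0^j(0,1)$ antiderivatives of an arbitrary $L_2$ test function by iterated integration corrected with multiples of $\eta$ as in \eqref{A.5}, \eqref{A.6}. Repairing your extension along these lines would essentially reproduce that cutoff-and-commutator analysis; as written, the central estimate of your proposal is unproved, and in the form stated it is false.
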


\begin{proof}
Since $\varphi(\cdot,y)\in H^k(\mathbb R_+)$, $\partial_x^{n}\varphi(\cdot,y)\in H^{-m}(\mathbb R_+)$ for a.e. $y\in (0,L)$ the assertion of the lemma follows from 
Lemma \ref{LA.3} from the Appendix.
\end{proof}

For the large-time decay results we need the Steklov inequalities in the following form: 
\begin{equation}\label{2.8}
\int_0^L f^2(y)\,dy \leq \frac{\varkappa L^2}{\pi^2} \int_0^L \bigl(f'(y)\bigr)^2\,dy,
\end{equation}
where $\varkappa=1$ if $f\in H_0^1(0,L)$, $\varkappa=4$ if $f\in H^1(0,L)$, $f\big|_{y=0}=0$. For $f\in H_0^1(0,L)$ \eqref{2.8} is the classic inequality (see, for example \cite{L14}), in the second case it can be easily derived from \eqref{2.8} for the first case by the odd extension of the function $f$ through the point $y=L$.

Let $\psi_l(y)$, $l=1,2\dots$, be the orthonormal in $L_2(0,L)$ system of the eigenfunctions for the operator $(-\psi'')$ on the segment $[0,L]$ with corresponding boundary conditions   $\psi(0)=\psi(L)=0$ in the case a), $\psi'(0)=\psi'(L)=0$ in the case b), $\psi(0)=\psi'(L)=0$ in the case c), $\psi(0)=\psi(L),\psi'(0)=\psi'(L)$ in the case d), $\lambda_l$ be the corresponding eigenvalues. Such systems are well-known and can be written in trigonometric functions. We hope that from the context it will be clear further when we use this eigenfunctions $\psi_l(y)$ and when admissible weight functions $\psi(x)$.

\bigskip
Besides equation \eqref{1.1} we consider a linear equation
\begin{equation}\label{2.9}
u_t-u_{xxxxx}+u_{xxx}+u_{xyy}+bu_x=f(t,x,y)
\end{equation}
with initial and boundary conditions \eqref{1.2}--\eqref{1.4}. Weak solutions to this problem are understood similarly to Definition \ref{D1.1}. First establish a result on well-posedness in the class of infinitely smooth functions.

\begin{lemma}\label{L2.4}
Let $u_0\in \widetilde{\EuScript S}_{exp}(\overline{\Sigma}_+)$, $f\in C^\infty\bigl([0,T]; \widetilde{\EuScript S}_{exp}(\overline{\Sigma}_+)\bigr)$. Set $\widetilde \Phi_0(x,y) \equiv u_0(x,y)$ and for $j\geq 1$
\begin{equation}\label{2.10}
\widetilde\Phi_j(x,y) \equiv \partial^{j-1}_t f(0,x,y) + (\partial_x^5 -\partial_x^3 -\partial_x\partial_y^2 -b\partial_x)\widetilde\Phi_{j-1}(x,y) 
\end{equation}
and let $\widetilde\Phi_j(0,y) = \widetilde \Phi_{jx}(0,y)\equiv 0$ for any $j$.
Then there exists a unique solution to problem \eqref{2.9}, \eqref{1.2}--\eqref{1.4} $u\in C^\infty\bigl([0,T]; \widetilde{\EuScript S}_{exp}(\overline{\Sigma}_+)\bigr)$.
\end{lemma}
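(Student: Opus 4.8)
The plan is to reduce the two-dimensional mixed problem to a countable family of one-dimensional boundary-value problems on $\mathbb R_+$, to solve those by a Laplace transform in $t$, and then to reassemble the solution; uniqueness and the governing a priori bound will come from the basic energy identity.

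First I would record the energy identity. Multiplying \eqref{2.9} by $u$ and integrating over $\Sigma_+$, then integrating by parts with the help of \eqref{1.3} and of the boundary conditions \eqref{1.4} (which make every boundary term on $y=0,L$ vanish for all four types), I expect to obtain
$$\frac12\frac{d}{dt}\iint u^2\,dxdy + \frac12\int_0^L u_{xx}^2\big|_{x=0}\,dy = \iint fu\,dxdy.$$
The sign-definite boundary term $\tfrac12\int_0^L u_{xx}^2|_{x=0}\,dy$ is exactly what the two left-boundary conditions $u=u_x=0$ produce from the term $-u_{xxxxx}$, so with $f\equiv0$, $u_0\equiv0$ one gets $u\equiv0$, which gives uniqueness; the same identity, together with its differentiated-in-$t$ and weighted-in-$x$ analogues, furnishes the a priori bounds used below.

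Second, for existence I would separate variables in $y$ using the eigenfunctions $\psi_l(y)$ of $-\partial_y^2$ with the boundary conditions of the chosen type, writing $u=\sum_l u_l(t,x)\psi_l(y)$. Since $u_0,f\in\widetilde{\EuScript S}_{exp}(\overline{\Sigma}_+)$ carry the matching $y$-boundary conditions, their coefficients are smooth in $(t,x)$, exponentially decaying in $x$, and rapidly decreasing in $l$; the equation decouples into
$$u_{l,t}-u_{l,xxxxx}+u_{l,xxx}+(b-\lambda_l)u_{l,x}=f_l,\qquad u_l(t,0)=u_{l,x}(t,0)=0,\quad u_l(0,\cdot)=u_{0l}.$$
Applying the Laplace transform in $t$ (after extending $f$ to $t\in[0,\infty)$ within the same class and restricting the result back to $[0,T]$) reduces each of these to a fifth-order ODE $-v'''''+v'''+(b-\lambda_l)v'+pv=F$ on $\mathbb R_+$ with $v(p,0)=v'(p,0)=0$ and decay at $+\infty$. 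For $\mathrm{Re}\,p$ large the characteristic roots of $-k^5+k^3+(b-\lambda_l)k+p=0$ split into three with positive and two with negative real part, so the decaying solution space is two-dimensional and, matched against the two conditions at $x=0$, yields a unique exponentially decaying $v(p,\cdot)$. Crucially, the resolvent bounds are uniform in $l$: in the real part of the energy identity for the transformed equation the first-order term contributes $(b-\lambda_l)\operatorname{Re}\int_0^\infty v'\bar v\,dx=-\tfrac12(b-\lambda_l)|v(0)|^2=0$ and so drops out, leaving $\mathrm{Re}\,p\,\|v\|^2+\tfrac12|v''(0)|^2=\mathrm{Re}\int_0^\infty F\bar v\,dx$.

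Finally I would invert the transform and reassemble. The recursively defined $\widetilde\Phi_j$ from \eqref{2.10} are precisely the Taylor coefficients $\partial_t^j u|_{t=0}$, hence the coefficients of the large-$p$ asymptotic expansion $v\sim\sum_j\widetilde\Phi_{j}\,p^{-j-1}$ of the transformed solution, and the imposed compatibility conditions $\widetilde\Phi_j(0,y)=\widetilde\Phi_{jx}(0,y)=0$ are exactly what make these coefficients consistent with the left boundary conditions, so that no boundary layer obstructs smoothness up to $t=0$ and at the corner. This yields $u_l\in C^\infty([0,T];H^\infty(\mathbb R_+))$ with exponential $x$-weights and bounds controlled uniformly in $l$; summing the rapidly convergent series $\sum_l u_l\psi_l$ then gives $u\in C^\infty([0,T];\widetilde{\EuScript S}_{exp}(\overline{\Sigma}_+))$ solving \eqref{2.9}, \eqref{1.2}--\eqref{1.4}. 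The hard part will be the resolvent ODE analysis on the half-line: establishing the correct count of decaying characteristic modes and the unique solvability of the two-condition boundary-value problem, with estimates strong enough and uniform enough in $p$ and $\lambda_l$ to survive both the inverse Laplace transform and the summation in $l$.
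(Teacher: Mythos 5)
Your plan is a genuinely different route from the paper's. The paper never takes a Laplace transform: it (i) solves the pure initial value problem on the whole strip $\Sigma$ explicitly by Fourier transform in $x$ and the eigenfunction series in $y$, and proves exponential decay of that solution on $\overline{\Sigma}_+$ by a weighted energy identity with the polynomial weights $\psi(x)=x^m$, summed over $m$ with coefficients $(2\alpha)^m/m!$ to produce the weight $e^{2\alpha x}$; (ii) subtracts the traces $\mu_0=-w|_{x=0}$, $\mu_1=-w_x|_{x=0}$, which by the compatibility conditions $\widetilde\Phi_j(0,y)=\widetilde\Phi_{jx}(0,y)\equiv 0$ vanish to infinite order at $t=0$, lifts them by $\Psi=\mu_0\eta(1-x)+\mu_1 x\eta(1-x)$, and solves the resulting forced problem with flat-at-$t=0$ data by Galerkin approximation, which gives control of $\partial_t^l\partial_y^n U$ in $C([0,T];L_{2,+})$; (iii) recovers all $x$-derivatives by bootstrapping from the equation $U_{xxxxx}=U_t+U_{xxx}+U_{xyy}+bU_x-F$ via the negative-order Sobolev lemma (Lemma \ref{L2.3}, inequality \eqref{2.7}), and finally gets the exponential decay of the full solution by the cutoff trick $\widetilde u=u\,\eta(x-1)$ and uniqueness for the whole-strip problem. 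This architecture completely sidesteps the two hardest points of your plan: corner compatibility is absorbed into boundary data vanishing to infinite order at $t=0$, and exponential weights are handled only for the explicit whole-strip solution. Your core computations that you do display are correct: the energy identity with the sign-definite term $\int_0^L u_{xx}^2|_{x=0}\,dy$ matches the paper's \eqref{2.17}, the root count (three growing, two decaying modes, matching the two conditions $v(0)=v'(0)=0$) is right and in fact stable for all $\mathrm{Re}\,p>0$ uniformly in $\lambda_l$, and the resolvent identity $\mathrm{Re}\,p\,\|v\|^2+\tfrac12|v''(0)|^2=\mathrm{Re}\int F\bar v\,dx$ is exact.

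That said, two genuine gaps remain between your sketch and the stated conclusion. First, your resolvent bound is unweighted $L_2$, while the lemma asserts $u\in C^\infty([0,T];\widetilde{\EuScript S}_{exp}(\overline{\Sigma}_+))$, i.e.\ bounds on \emph{all} derivatives against \emph{every} weight $e^{nx}$. Conjugating by $e^{\alpha x}$ destroys the clean identity: the zero-order contribution $\alpha(\lambda_l-b)\|e^{\alpha x}v\|^2$ happens to have a favorable sign, but the fifth- and third-order terms generate mixed lower-order terms that must be absorbed, and the resulting estimates must be uniform in $p$ and $l$; similarly, $x$-regularity up to $x=0$ requires bootstrapping $v'''''=pv+v'''+(b-\lambda_l)v'-F$, which costs powers of $|p|$ and $\lambda_l$ that have to be offset. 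None of this is automatic from the displayed identity. Second, the passage from the formal expansion $v\sim\sum_j\widetilde\Phi_j p^{-j-1}$ to $C^\infty$ smoothness in $t$ up to $t=0$ is heuristic as written: the Laplace transform of the solution decays only like $|p|^{-1}$ generically, so naive inversion does not even give continuity of time derivatives. The standard repair is to subtract the Taylor polynomial $\sum_{j<N}\widetilde\Phi_j t^j/j!$ (using the compatibility conditions so the subtracted part respects \eqref{1.3}) and prove that the remainder problem, whose data vanish to order $N$ at $t=0$, has a solution of class $C^N$ in time --- which is precisely the statement ``no boundary layer obstructs smoothness'' that you assert rather than prove. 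With these two blocks filled in, your semigroup/resolvent route would work and would even yield quantitative resolvent bounds the paper's argument does not produce; the paper's construction buys, in exchange, a proof in which both difficulties never arise.
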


\begin{proof}
First of all consider the corresponding initial value problem. Let $\Sigma= \mathbb R\times (0,L)$ and 
$\widetilde{\EuScript S}(\overline{\Sigma})$ be a space of infinitely smooth on $\overline{\Sigma}$ functions $\varphi(x,y)$ such that $\displaystyle{(1+|x|)^n|\partial^\alpha\varphi(x,y)|\leq c(n,\alpha)}$ for any $n$, multi-index $\alpha$, $(x,y)\in \overline{\Sigma}$ and on the boundaries $y=0$, $y=L$ verifying the same conditions as in the definition if the space $\widetilde{\EuScript S}(\overline{\Sigma}_+)$. Extend the functions $u_0$ and $f$ to the whole strip such that $u_0\in \widetilde{\EuScript S}(\overline{\Sigma})$, $f\in C([0,T]; \widetilde{\EuScript S}(\overline{\Sigma}))$ and consider problem \eqref{2.9} (in $\Pi_T =(0,T)\times\Sigma$), \eqref{1.2} (in $\Sigma$), \eqref{1.4} (in $\Omega_{T}=(0,T)\times\mathbb R$). Then with the use of the Fourier transform for the variable $x$ and the Fourier series for the variable $y$  a solution to  problem \eqref{2.9}, \eqref{1.2}, \eqref{1.4} can be written as follows:
$$
u(t,x,y)=\frac{1}{2\pi}\int_{\mathbb R}\ \sum_{l=1}^{+\infty}e^{i\xi x}\psi_l(y)\widehat{u}(t,\xi,l)\,d\xi,
$$
where
\begin{equation}\label{2.11}
\widehat{u}(t,\xi,l)=\widehat{u_0}(\xi,l)e^{i(\xi^5 +\xi^3+\xi\lambda_l -b\xi)t} +
\int_0^t\widehat{f}(\tau,\xi,l)e^{i(\xi^5 +\xi^3+\xi\lambda_l -b\xi)(t-\tau)}\,d\tau,
\end{equation}
$$
\widehat{u_0}(\xi,l)\equiv\iint_{\Sigma} e^{-i\xi x}\psi_l(y)u_0(x,y)\,dxdy,
$$
$$
\widehat f(t,\xi,l)\equiv\iint_{\Sigma} e^{-i\xi x}\psi_l(y) f(t,x,y)\,dxdy.
$$
According to the properties of the functions $u_0$ and $f$ this solution $u\in C^\infty([0,T],\widetilde{\EuScript S}(\overline{\Sigma}))$.

Next, let $v\equiv \partial^k_x\partial^l_y u$ for some $k, l$. Then the function $v$ satisfies an equation of \eqref{2.9} type, where $f$ is replaced by $\partial^k_x\partial^l_y f$. Let $m\geq 5$, $\psi(x) \equiv x^m$ (note that this function is not an admissible weight function). Multiplying this equation by $2v(t,x,y)\psi(x)$ and integrating over $\Sigma_+$, we derive an equality (here we use that $\psi^{j}(0)=0$ for $j\leq 4$)
\begin{multline}\label{2.12}
\frac{d}{dt}\iint v^2 \psi\, dxdy + 5\iint v_{xx}^2\psi'\,dxdy + \iint (3v_x^2 +v_y^2)\psi' \,dxdy \\=
5\iint v_x^2 \psi'''\, dxdy + \iint (-\psi^{(5)} +\psi''' +b\psi') v^2\, dxdy +2\iint\partial^k_x\partial^l_y f v\psi\,dxdy.
\end{multline}
Here
$$
\iint v_x^2 \psi'''\, dxdy = -\iint v_{xx} v \psi'''\,dxdy - \iint v_x v \psi^{(4)}\, dxdy.
$$
It is easy to see that $\psi''' \leq \sqrt{6\psi' \psi^{(5)}}$, $\psi^{(4)} \leq \sqrt{2\psi''' \psi^{(5)}}$ and derive from the above equality that
$$
\iint v_x^2 \psi'''\,dxdy \leq \iint v_{xx}^2 \psi' \,dxdy + 8\iint v^2 \psi^{(5)}\, dxdy.
$$
Then equality \eqref{2.12} yields that
\begin{equation}\label{2.13}
\frac{d}{dt}\iint v^2 \psi\, dxdy \leq \iint v^2(39\psi^{(5)} + \psi''' +|b|\psi')\, dxdy + 2\iint\partial^k_x\partial^l_y f v\psi\,dxdy.
\end{equation}

Fix $\alpha>0$ and let $n\geq5$. For any $m\in [5,n]$ multiplying the corresponding inequality \eqref{2.13} by $(2\alpha)^m/(m!)$ and summing by $m$ we obtain that for
$$
z_n(t)\equiv \iint\sum_{m=0}^n\frac{(2\alpha x)^m}{m!}v^2(t,x,y)\,dxdy,
$$
due to the special choice of the function $\psi$, inequalities
$$
z_n'(t)\leq c z_n(t)+c, \quad z_n(0)\leq c,
$$
hold uniformly with respect to $n$,
whence it follows that
$$
\sup_{t\in[0,T]}\iint e^{2\alpha x} v^2\,dxdy<\infty.
$$
Thus, $u\in C^\infty([0,T], \widetilde{\EuScript S}_{exp}(\overline{\Sigma}_+))$. From now on in the proof we use the notation $w(t,x,y)$ for the constructed solution of the initial value problem.

Next, let $\mu_0(t,y) \equiv -w(t,0,y)$, $\mu_1(t,y)\equiv -w_x(t,0,y)$. Note that the functions $\mu_j\in C^\infty(\overline{B}_T)$ and satisfy boundary conditions \eqref{1.4}. Moreover, the compatibility conditions from the hypothesis of the lemma ensure that $\partial_t^l \mu_j(0,y) \equiv 0$ $\forall l$. Consider in $\Pi_T^+$ an initial-boundary value problem
\begin{gather}\label{2.14}
u_t -u_{xxxxx} +u_{xxx} +u_{xyy} +bu_x =0,\\
\label{2.15}
u\big|_{t=0}=0,\quad u\big|_{x=0} = \mu_0(t,y), \quad u_x\big|_{x=0} =\mu_1(t,y),
\end{gather}
and with boundary conditions \eqref{1.4}.

Let $\Psi(t,x,y) \equiv \mu_0(t,y)\eta(1-x) + \mu_1(t,y)x\eta(1-x)$, $F(t,x,y) \equiv \Psi_{xxxxx} - \Psi_{xxx} -\Psi_{xyy} -b\Psi_x -\Psi_t$, $U(t,x,y) \equiv u(t,x,y) - \Psi(t,x,y)$, then problem \eqref{2.14}, \eqref{2.15}, \eqref{1.4} is equivalent to problem \eqref{2.9}, \eqref{1.2}--\eqref{1.4} for the function $U$, $u_0\equiv 0$, $f\equiv F$. It is obvious that $F\in C^\infty([0,T], \widetilde{\EuScript S}_{exp}(\overline{\Sigma}_+))$ and $\partial_t^l F(0,x,y) \equiv 0$ $\forall l$.

Apply the Galerkin method. Let $\{\varphi_j(x): j=1,2,\dots\}$ be a set of linearly independent functions complete in the space $\{\varphi \in H^5(\mathbb R_+): \varphi(0)=\varphi'(0)=0\}$.
Seek an approximate solution of the last problem in the form $U_k(t,x,y)= \sum\limits_{j,l=1}^k c_{kjl}(t) \varphi_j(x)\psi_l(y)$ via conditions 
\begin{multline}\label{2.16}
\iint (U_{kt}-U_{kxxxxx}+U_{kxxx}+U_{kxyy}+bU_{kx})\varphi_i(x)\psi_m(y)\,dxdy\\ -
\iint F\varphi_i\psi_m\,dxdy=0, \qquad i,m=1,\dots,k, \quad t\in [0,T],
\end{multline}
$c_{kjl}(0)=0$. 
Multiplying \eqref{2.16} by $2c_{kim}(t)$ and summing with respect to $i,m$, we find that
\begin{equation}\label{2.17}
\frac d{dt} \iint U_k^2\,dxdy +\int_0^L U_{kxx}^2\big|_{x=0}\,dy = 
2\iint F U_k\,dxdy,
\end{equation}
whence follows that
\begin{equation}\label{2.18}
\|U_k\|_{L_\infty(0,T;L_{2,+})} \leq \|F\|_{L_1(0,T;L_{2,+})}.
\end{equation}
Next, putting in \eqref{2.16} $t=0$, multiplying by $c'_{kim}(0)$ and summing with respect to $i,m$, we derive that $U_{kt}\big|_{t=0}=0$. Then differentiating \eqref{2.16} with respect to $t$, multiplying by $2c'_{kim}(t)$ and summing with respect to $i,m$, we find  similarly to  \eqref{2.18} that
\begin{equation}\label{2.19}
\|U_{kt}\|_{L_\infty(0,T;L_{2,+})} \leq \|F_t\|_{L_1(0,T;L_{2,+})}.
\end{equation}
Next, since $\psi_m^{(2n)}(y)=(-\lambda_m)^n\psi_m(y)$ it follows from \eqref{2.16} that for any $n$ and $l$
similarly to \eqref{2.18}, \eqref{2.19}
\begin{equation}\label{2.20}
\|\partial_t^l \partial_y^n U_k\|_{L_\infty(0,T;L_{2,+})} \leq \|\partial_t^l \partial_y^n F\|_{L_1(0,T;L_{2,+})}.
\end{equation}
Estimate \eqref{2.20} provides existence of a weak solution $U(t,x,y)$ to the considered problem such that $\partial_t^l \partial_y^n U \in C([0,T];L_{2,+})$ for all $l$ and $n$ in the sense of the corresponding integral equality of \eqref{1.11} type for $g\equiv0$, $f\equiv F$, $u_0\equiv 0$. 
Note, that the traces of the function $U$ satisfy conditions \eqref{1.2} for $u_0\equiv 0$ and \eqref{1.4}. 

Moreover, it follows from the  corresponding equality of \eqref{1.11} type that since
\begin{equation}\label{2.21}
U_{xxxxx} = U_t +U_{xxx} +U_{xyy} +bU_x - F,
\end{equation}
$\partial_t^l \partial_y^n U_{xxxxx} \in C([0,T];H^{(-3,0)}_+)\ \forall l,n$, therefore, the application of inequality \eqref{2.7} (for $\varphi\equiv \partial_t^l\partial_y^n U$, $k=0$, $m=3$) yields that $\partial_t^l\partial_y^n U_x \in C([0,T];L_{2,+})\ \forall l,n$, then the application twice of \eqref{2.21}, \eqref{2.7} (for $k=1$, $m=2$ and $k=2$, $m=1$) yields that $\partial_t^l \partial_y^n U_{xxx} \in C([0,T];L_{2,+})\ \forall l,n$, whence again from \eqref{2.21} follows that $\partial_t^l \partial_y^n U_{xxxxx} \in C([0,T];L_{2,+})\ \forall l,n$, the function $U$ satisfies corresponding equation \eqref{2.9} a.e. in $\Pi_T^+$ and its traces satisfy \eqref{1.3}. 
For any natural $m$ differentiating corresponding equation \eqref{2.9} $5(m-1)$ times and using induction with respect to $m$, we derive that $\partial_t^l\partial_x^{5m}\partial_y^n U \in C([0,T];L_{2,+})$. 

As a result, the solution to problem \eqref{2.14}, \eqref{2.15}, \eqref{1.4} is constructed such that $\partial_t^l\partial_x^{m}\partial_y^n u \in C([0,T];L_{2,+})$ $\forall l,m,n$. From now on in the proof we use notation $v(t,x,y)$ for this solution.

Finally, the function $u(t,x,y) \equiv w(t,x,y) + v(t,x,y)$ is the solution to problem \eqref{2.9}, \eqref{1.2}--\eqref{1.4} such that $\partial_t^l\partial_x^{m}\partial_y^n u \in C([0,T];L_{2,+})$ $\forall l,m,n$. Let $\widetilde u(t,x,y) \equiv u(t,x,y)\eta(x-1)$. The function $\widetilde u$ solves an initial value problem in the strip $\Sigma$ of \eqref{2.9}, \eqref{1.2}, \eqref{1.4} type, where the functions $f$, $u_0$ are substituted by corresponding functions $\widetilde f$, $\widetilde u_0$ from the same classes and the obtained result at the beginning of the proof for the initial value problem together with the obvious uniqueness provide that $\widetilde u\in C^\infty([0,T], \widetilde{\EuScript S}_{exp}(\overline{\Sigma}_+))$ and so $u\in C^\infty([0,T], \widetilde{\EuScript S}_{exp}(\overline{\Sigma}_+))$.
\end{proof}

\begin{lemma}\label{L2.5}
A generalized solution to problem \eqref{2.9}, \eqref{1.2}--\eqref{1.4} is unique in the space $L_2(\Pi_T^+)$.
\end{lemma}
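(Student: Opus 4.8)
The plan is to prove uniqueness by a duality (Holmgren-type) argument, reducing the whole matter to the solvability of the adjoint problem in a smooth class, which is supplied by the scheme of Lemma~\ref{L2.4}.

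By linearity it suffices to show that a generalized solution $u\in L_2(\Pi_T^+)$ corresponding to $u_0\equiv 0$ and $f\equiv 0$ vanishes identically. For such a $u$ the defining integral identity of \eqref{1.11} type (with $g\equiv 0$) reduces to
\begin{equation}\label{eq-uni}
\iiint_{\Pi_T^+} u\bigl(\phi_t -\phi_{xxxxx} +\phi_{xxx}+\phi_{xyy} +b\phi_x\bigr)\,dxdydt =0
\end{equation}
for every admissible test function $\phi$, i.e. every $\phi\in C^\infty\bigl([0,T];\widetilde{\EuScript S}(\overline{\Sigma}_+)\bigr)$ with $\phi\big|_{t=T}=0$ and $\phi\big|_{x=0}=\phi_x\big|_{x=0}=\phi_{xx}\big|_{x=0}=0$. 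The finite sums $F=\sum_l h_l(t,x)\psi_l(y)$ with $h_l\in C_0^\infty\bigl((0,T)\times\mathbb R_+\bigr)$ are dense in $L_2(\Pi_T^+)$ and satisfy the $y$-conditions entering the definition of $\widetilde{\EuScript S}(\overline{\Sigma}_+)$. Hence I would deduce $u\equiv 0$ as soon as, for each such $F$, I can produce an admissible $\phi$ for which the parenthesis in \eqref{eq-uni} equals $F$: then \eqref{eq-uni} gives $\iiint_{\Pi_T^+}uF\,dxdydt=0$, and density finishes the proof.

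The remaining and central step is therefore to solve the adjoint problem: given $F$ as above, find $\phi$ with
$$
\phi_t -\phi_{xxxxx} +\phi_{xxx}+\phi_{xyy} +b\phi_x =F,\qquad \phi\big|_{t=T}=0,
$$
boundary conditions \eqref{1.4}, and the three conditions $\phi\big|_{x=0}=\phi_x\big|_{x=0}=\phi_{xx}\big|_{x=0}=0$. Reversing time via $w(t,x,y)=\phi(T-t,x,y)$ turns this into the forward problem
$$
w_t +w_{xxxxx} -w_{xxx}-w_{xyy} -bw_x =-F(T-t,x,y),\qquad w\big|_{t=0}=0,
$$
with the same boundary data. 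This is of the type treated in Lemma~\ref{L2.4}, and I claim it is solvable in $C^\infty\bigl([0,T];\widetilde{\EuScript S}_{exp}(\overline{\Sigma}_+)\bigr)$ by repeating that proof with two changes. First, the Galerkin basis $\{\varphi_j\}$ in $x$ is chosen complete in $\{\varphi\in H^5(\mathbb R_+):\varphi(0)=\varphi'(0)=\varphi''(0)=0\}$, so that the approximations carry the three boundary conditions at $x=0$; the opposite sign of the highest-order term is immaterial, since in the energy identity that term contributes only boundary values. Second, in the identity of \eqref{2.17} type every boundary term at $x=0$ produced by integrating by parts $w_{xxxxx}$, $w_{xxx}$, $w_{xyy}$ and $bw_x$ is a combination of the traces of $w$, $w_x$, $w_{xx}$ and $w_y$ at $x=0$, all of which vanish (note $w_y\big|_{x=0}=\partial_y(w\big|_{x=0})=0$); thus the a~priori bounds \eqref{2.18}--\eqref{2.20} go through, and the regularity bootstrap through \eqref{2.21} and Lemma~\ref{L2.3} is unchanged. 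Since $F$ is compactly supported in $t$, all compatibility conditions at $t=T$ hold trivially, so the resulting $\phi$ is smooth, exponentially decaying in $x$, vanishes near $t=T$, and meets every requirement of an admissible test function in \eqref{eq-uni}.

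The hard part is precisely this solvability of the adjoint problem with exactly three boundary conditions at $x=0$, and this is where the half-line geometry of the equation is felt. The asymmetry between the two conditions \eqref{1.3} imposed on $u$ and the three conditions imposed on $\phi$ — whose total number equals the order $5$ of the operator — is exactly what forces every boundary contribution at $x=0$ to cancel, both in the derivation of the weak identity \eqref{1.11} and in the energy estimate for the time-reversed adjoint equation. Once that cancellation is verified, the duality argument closes and $u\equiv 0$.
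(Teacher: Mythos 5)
Your argument is essentially the paper's own: the paper proves Lemma~\ref{L2.5} as an immediate corollary of Lemma~\ref{L2.6}, which supplies exactly the smooth solvability of the adjoint problem \eqref{2.22}, \eqref{1.2}, \eqref{2.23}, \eqref{1.4} that your duality scheme requires, and your time-reversed problem coincides with that one, including the count of three boundary conditions at $x=0$. Two remarks on the details, since you re-derive the adjoint existence rather than quote it. First, where you build all three conditions at $x=0$ into the Galerkin basis and keep the standard formulation, the paper keeps the two-condition basis of Lemma~\ref{L2.4} and transposes the spatial derivatives onto the basis functions in \eqref{2.26}, so that $u_{xx}\big|_{x=0}=0$ arises as a natural boundary condition; your variant is equally workable, since with $\psi\equiv 1$ the fifth-order term indeed produces only boundary terms, which vanish for your basis, and the bounds \eqref{2.18}--\eqref{2.20} and the bootstrap via \eqref{2.21} and Lemma~\ref{L2.3} go through. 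Second, your claim of solvability in $C^\infty\bigl([0,T];\widetilde{\EuScript S}_{exp}(\overline{\Sigma}_+)\bigr)$ ``by repeating that proof'' is an overclaim: the exponential-weight step of Lemma~\ref{L2.4} rests on the dissipative term $5\iint v_{xx}^2\psi'\,dxdy$ in \eqref{2.12}, whose sign flips for the adjoint operator $\partial_t+\partial_x^5-\dots$, so the weighted estimates do not carry over; your statement that the opposite sign of the highest-order term is immaterial is true only for the unweighted identity. Accordingly, the paper's Lemma~\ref{L2.6} asserts membership only in $C^\infty\bigl([0,T];\widetilde{\EuScript S}(\overline{\Sigma}_+)\bigr)$. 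This inaccuracy is harmless for your purpose, because polynomial decay of all orders is precisely what Definition~\ref{D1.1} demands of test functions, so the duality argument closes with the weaker class; but as written, that step of your proof would fail.
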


\begin{proof}
This lemma is a corollary of the following result on existence of smooth solutions to the corresponding adjoint problem.
\end{proof}

Consider in $\Pi_T^+$ an initial-boundary value problem for an equation
\begin{equation}\label{2.22}
u_t+u_{xxxxx}-u_{xxx}-u_{xyy}-bu_x=f(t,x,y)
\end{equation}
with initial condition \eqref{1.2}, boundary conditions \eqref{1.4} and boundary conditions
\begin{equation}\label{2.23}
u\big|_{x=0}= u_x\big|_{x=0}=  u_{xx}\big|_{x=0} =0.
\end{equation}

\begin{lemma}\label{L2.6}
Let $u_0\in \widetilde{\EuScript S}(\overline{\Sigma}_+)$, $f\in C^\infty\bigl([0,T]; \widetilde{\EuScript S}(\overline{\Sigma}_+)\bigr)$ and $\widetilde\Phi_j(0,y) = \widetilde \Phi_{jx}(0,y)\equiv 0$ for any $j$, where here in the definition of the corresponding functions $\widetilde\Phi_j$ in comparison with \eqref{2.10} the sign before the second term in the right-hand side is changed.
Then there exists a unique solution to problem \eqref{2.22}, \eqref{1.2}, \eqref{2.23}, \eqref{1.4} $u\in C^\infty\bigl([0,T]; \widetilde{\EuScript S}(\overline{\Sigma}_+)\bigr)$.
\end{lemma}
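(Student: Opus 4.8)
The plan is to repeat, with the modifications dictated by the adjoint structure, the three-stage argument used for Lemma \ref{L2.4}: first solve the corresponding pure initial value problem on the whole strip $\Sigma$, then subtract off a boundary layer to recover the homogeneous conditions \eqref{2.23}, and finally construct the corrected part by the Galerkin method. I would first extend $u_0$ and $f$ to $\widetilde{\EuScript S}(\overline{\Sigma})$ and $C^\infty([0,T];\widetilde{\EuScript S}(\overline{\Sigma}))$ and write the solution of the initial value problem for \eqref{2.22} through the Fourier transform in $x$ and the expansion in the eigenfunctions $\psi_l(y)$, exactly as in \eqref{2.11}; since the adjoint symbol differs from the one in Lemma \ref{L2.4} only by the sign of the phase, it still has modulus one, so the resulting estimates are identical and the solution, call it $w$, lies in $C^\infty([0,T];\widetilde{\EuScript S}(\overline{\Sigma}))$. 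Note that, in contrast to Lemma \ref{L2.4}, the target class here is only $\widetilde{\EuScript S}(\overline{\Sigma}_+)$ rather than $\widetilde{\EuScript S}_{exp}(\overline{\Sigma}_+)$, so the exponential-weight bootstrap based on \eqref{2.12}--\eqref{2.13} is unnecessary: the Schwartz decay already produced by the Fourier representation suffices.

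Next I would correct the boundary values. Equation \eqref{2.22} carries the ``$+u_{xxxxx}$'' sign, so well-posedness on the half-strip requires three conditions at $x=0$ rather than two; accordingly I must annihilate three traces of $w$. Set $\mu_0(t,y)\equiv -w(t,0,y)$, $\mu_1(t,y)\equiv -w_x(t,0,y)$, $\mu_2(t,y)\equiv -w_{xx}(t,0,y)$; these functions are smooth, satisfy boundary conditions \eqref{1.4}, and by the compatibility assumptions on the $\widetilde\Phi_j$ they vanish together with all their $t$-derivatives at $t=0$. Defining the lifting $\Psi(t,x,y)\equiv\bigl(\mu_0(t,y)+\mu_1(t,y)x+\tfrac12\mu_2(t,y)x^2\bigr)\eta(1-x)$ and $U\equiv u-\Psi$ reduces the problem to \eqref{2.22} with zero initial data, a right-hand side $F\in C^\infty([0,T];\widetilde{\EuScript S}(\overline{\Sigma}_+))$ whose $t$-derivatives vanish at $t=0$, and the homogeneous conditions $U|_{x=0}=U_x|_{x=0}=U_{xx}|_{x=0}=0$.

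I would then run the Galerkin scheme $U_k=\sum_{j,l=1}^k c_{kjl}(t)\varphi_j(x)\psi_l(y)$, now choosing the system $\{\varphi_j\}$ complete in $\{\varphi\in H^5(\mathbb R_+):\varphi(0)=\varphi'(0)=\varphi''(0)=0\}$ so as to match all three boundary conditions. Multiplying the Galerkin relations by $2c_{kim}(t)$ and summing, I integrate the terms $\iint U_kU_{kxxxxx}$, $\iint U_kU_{kxxx}$, $\iint U_kU_{kxyy}$ and $\iint U_kU_{kx}$ by parts; here lies the crucial point and the main thing to get right. With only two conditions the fifth-order term would contribute $-\int_0^L U_{kxx}^2|_{x=0}\,dy$, an anti-dissipative boundary term of the wrong sign that would wreck the energy estimate -- precisely the mirror image of the favourable $+\int_0^L U_{kxx}^2|_{x=0}\,dy$ appearing in \eqref{2.17} for the direct problem. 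The third condition $U_{xx}|_{x=0}=0$ is exactly what annihilates this term, and all remaining boundary contributions at $x=0$ vanish as well because $U|_{x=0}=U_x|_{x=0}=0$. Hence the energy identity collapses to $\frac{d}{dt}\iint U_k^2\,dxdy=2\iint FU_k\,dxdy$, which yields $\|U_k\|_{L_\infty(0,T;L_{2,+})}\leq\|F\|_{L_1(0,T;L_{2,+})}$ as in \eqref{2.18}.

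Finally I would upgrade regularity as in Lemma \ref{L2.4}: differentiating the Galerkin relations in $t$ gives the analogue of \eqref{2.19}, and using $\psi_m^{(2n)}=(-\lambda_m)^n\psi_m$ bounds $\partial_t^l\partial_y^n U_k$ uniformly, producing a weak solution $U$ with $\partial_t^l\partial_y^n U\in C([0,T];L_{2,+})$ for all $l,n$. Rewriting \eqref{2.22} as $U_{xxxxx}=-U_t+U_{xxx}+U_{xyy}+bU_x+F$ and applying Lemma \ref{L2.3} (inequality \eqref{2.7}) successively recovers the $x$-derivatives, so that $\partial_t^l\partial_x^m\partial_y^n U\in C([0,T];L_{2,+})$ for all $l,m,n$; restoring $v\equiv U+\Psi$, setting $u\equiv w+v$, cutting off by $\eta(x-1)$ and invoking the initial value construction together with uniqueness then gives $u\in C^\infty([0,T];\widetilde{\EuScript S}(\overline{\Sigma}_+))$. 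Uniqueness of this smooth solution is immediate from the same energy identity with $F\equiv0$. As indicated, the main obstacle is the boundary-term bookkeeping that forces the third condition in \eqref{2.23} and guarantees that the energy estimate carries no anti-dissipative contribution.
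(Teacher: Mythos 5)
Your overall architecture coincides with the paper's: extend the data and solve the pure initial value problem on the whole strip via the Fourier representation (with only the sign of the phase changed in \eqref{2.11}), lift the three traces by $\Psi\equiv\bigl(\mu_0+\mu_1x+\tfrac12\mu_2x^2\bigr)\eta(1-x)$, run a Galerkin scheme for the remainder $U$, recover the $x$-derivatives through \eqref{2.7} (Lemma \ref{L2.3}), and finish with the cutoff $\eta(x-1)$; your remark that no exponential-weight bootstrap is needed here is also exactly what the paper does. The genuine problem is your modification of the Galerkin scheme. You impose the third boundary condition \emph{strongly}, taking $\{\varphi_j\}$ complete in $\{\varphi\in H^5(\mathbb R_+):\varphi(0)=\varphi'(0)=\varphi''(0)=0\}$ and using the direct form of the relations. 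Your energy identity then indeed collapses to $\frac{d}{dt}\iint U_k^2\,dxdy=2\iint FU_k\,dxdy$ -- but precisely because of that it contains \emph{no trace term}, so you get no uniform control of $U_{kxx}\big|_{x=0}$, and your a priori bounds control no $x$-derivatives of $U_k$ at all. To pass to the limit you must transpose all $x$-derivatives onto the test functions, so the limit relation holds only for $\phi$ with $\phi\big|_{x=0}=\phi_x\big|_{x=0}=\phi_{xx}\big|_{x=0}=0$; integrating by parts back, such a formulation forces only $U\big|_{x=0}=U_x\big|_{x=0}=0$, since the boundary term $-U_{xx}(0)\,\phi_{xx}(0)$, which is what pins down the third condition, is annihilated by your extra constraint on $\phi$. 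And the third condition cannot be waived: for the adjoint operator the two-condition forward problem satisfies the \emph{anti}-dissipative identity $\frac{d}{dt}\iint U^2\,dxdy=\int_0^L U_{xx}^2\big|_{x=0}\,dy+2\iint FU\,dxdy$, so uniqueness fails there and you cannot identify your limit as a solution of \eqref{2.22}, \eqref{1.2}, \eqref{2.23}, \eqref{1.4}. The condition $U_{kxx}\big|_{x=0}=0$ holds for every approximation but is not preserved under the weak convergence you actually have.

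The paper avoids exactly this trap: it keeps the \emph{same} two-condition basis as in Lemma \ref{L2.4} ($\varphi(0)=\varphi'(0)=0$) and writes the Galerkin relations in the transposed form \eqref{2.26} with no boundary term, which imposes $u_{xx}\big|_{x=0}=0$ weakly, as a natural boundary condition. This yields the dissipative identity \eqref{2.17} with the good-signed term $+\int_0^L U_{kxx}^2\big|_{x=0}\,dy$ (note that your claim that a two-condition basis necessarily produces an anti-dissipative contribution is true only for the direct form, not for \eqref{2.26}), and, more importantly, makes the limit formulation \eqref{2.27} valid for the larger test class with only $\phi\big|_{x=0}=\phi_x\big|_{x=0}=0$, which does encode all three boundary conditions once the regularity bootstrap is performed. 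To repair your argument you would either have to switch to this weak imposition or supply an independent proof that the limit of your scheme satisfies $U_{xx}\big|_{x=0}=0$; neither is in your text.
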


\begin{proof}
First of all, as in the proof of Lemma \ref{L2.4} extend the functions $u_0$ and $f$ to the whole strip and consider problem \eqref{2.22}, \eqref{1.2}, \eqref{1.4}. Construct its solution $w \in C^\infty([0,T],\widetilde{\EuScript S}(\overline{\Sigma}))$ in a similar way with the only obvious difference in \eqref{2.11}.

Next, let $\mu_0(t,y) \equiv -w(t,0,y)$, $\mu_1(t,y)\equiv -w_x(t,0,y)$, $\mu_2(t,y) \equiv - w_{xx}(t,0,y)$. Note that the functions $\mu_j\in C^\infty(\overline{B}_T)$ and satisfy boundary conditions \eqref{1.4}. Moreover, the compatibility conditions form the hypothesis of the lemma ensure that $\partial_t^l \mu_j(0,y) \equiv 0$ $\forall l$. Consider in $\Pi_T^+$ an initial-boundary value problem
\begin{gather}\label{2.24}
u_t +u_{xxxxx} -u_{xxx} -u_{xyy} -bu_x =0,\\
\label{2.25}
u\big|_{t=0}=0,\quad u\big|_{x=0} = \mu_0(t,y), \quad u_x\big|_{x=0} =\mu_1(t,y),  \quad u_{xx}\big|_{x=0} =\mu_2(t,y),
\end{gather}
and with boundary conditions \eqref{1.4}.

Let $\Psi(t,x,y) \equiv \mu_0(t,y)\eta(1-x) + \mu_1(t,y)x\eta(1-x) + \mu_2(t,y)x^2\eta(1-x)/2$, $F(t,x,y) \equiv -\Psi_{xxxxx} +\Psi_{xxx} +\Psi_{xyy} +b\Psi_x -\Psi_t$, $U(t,x,y) \equiv u(t,x,y) - \Psi(t,x,y)$, then problem \eqref{2.24}, \eqref{2.25}, \eqref{1.4} is equivalent to problem \eqref{2.22}, \eqref{1.2}, \eqref{2.23}, \eqref{1.4} for the function $U$, $u_0\equiv 0$, $f\equiv F$. It is obvious that $F\in C^\infty([0,T], \widetilde{\EuScript S}(\overline{\Sigma}_+))$ and $\partial_t^l F(0,x,y) \equiv 0$ $\forall l$.

Let $\{\varphi_j(x): j=1,2,\dots\}$ be the same set of functions as in the proof of Lemma \ref{L2.4}. Seek an approximate solution in the form $U_k(t,x,y)= \sum\limits_{j,l=1}^k c_{kjl}(t) \varphi_j(x)\psi_l(y)$ via conditions 
\begin{multline}\label{2.26}
\iint \bigl(U_{kt}\varphi_i\psi_m -U_k(\varphi_i^{(5)}\psi_m-\varphi'''_i\psi_m -\varphi'_i\psi''_m-b\varphi_i'\psi_m)\bigr)\,dxdy \\-
\iint F\varphi_i\psi_m\,dxdy=0, \quad i,m=1,\dots,k, t\in [0,T],
\end{multline}
$c_{kjl}(0)=0$. Multiplying \eqref{2.26} by $2c_{kim}(t)$ and summing with respect to $i,m$, we derive equality \eqref{2.17}, which implies estimate \eqref{2.18} Then in similar way we derive estimate \eqref{2.20}, which provide existence of a weak solution $U(t,x,y)$ to the considered problem such that $\partial_t^l \partial_y^n U \in C([0,T];L_{2,+})\ \forall l,n\geq 0$ in the following sense: for any function $\phi\in L_\infty(0,T;\widetilde H_+^2)$, such that $\phi_t, \phi_{xxxxx}, \phi_{xyy}\in L_\infty(0,T;L_{2,+})$, $\phi\big|_{t=T}=0$, $\phi\big|_{x=0}= \phi_x\big|_{x=0}=0$, the following equality holds:
\begin{equation}\label{2.27}
\iiint_{\Pi_T^+}\Bigl[U(\phi_t +\phi_{xxxxx}-\phi_{xxx}-\phi_{xyy}-b\phi_x) +F\phi\Bigr]\,dxdydt =0.
\end{equation}
Then also similarly to the proof of Lemma \ref{L2.4} we obtain a solution to problem \eqref{2.24}, \eqref{2.25}, \eqref{1.4} $v$ such that $\partial_t^l\partial_x^{m}\partial_y^n v \in C([0,T];L_{2,+})$ $\forall l,m,n$.

Finally, again similarly to the proof of Lemma \ref{L2.4} we show that the function $u\equiv w+v$ is the desired solution.
\end{proof}

\begin{remark}\label{R2.1}
In further lemmas of this section we first consider smooth solutions constructed in Lemma~\ref{L2.4} and then pass to the limit on the basis of obtained estimates. 
\end{remark}

\begin{lemma}\label{L2.7}
Let $\psi(x)$ be an admissible weight function, such that $\psi'(x)$ is also an admissible weight function, $u_0\in L_{2,+}^{\psi(x)}$, $f\equiv f_0 +f_{1x}$, where   $f_0\in L_1(0,T;L_{2,+}^{\psi(x)})$, $f_1\in L_{4/3}(0,T;L_{2,+}^{\psi^{3/2}(x)(\psi'(x))^{-1/2}})$. Then there exist a (unique) weak solution to problem \eqref{2.9}, \eqref{1.2}--\eqref{1.4} from the space $X^{\psi(x)}(\Pi_T^+)$ and a function $\mu_2\in L_2(B_T)$, such that for any function $\phi\in L_\infty(0,T;\widetilde H_+^2)$, $\phi_t, \phi_{xxxxx}, \phi_{xyy}\in L_\infty(0,T:L_{2,+})$, $\phi\big|_{t=T}=0$, $\phi\big|_{x=0} =\phi_x\big|_{x=0}=0$, the following equality holds:
\begin{multline}\label{2.28}
\iiint_{\Pi_T^+}\Bigl[u(\phi_t -\phi_{xxxxx} +\phi_{xxx}+\phi_{xyy} +b\phi_x)+(f_0+f_2)\phi -f_1\phi_x\Bigr]\,dxdydt \\
+\iint u_0\phi\big|_{t=0}\,dxdy -
\iint_{B_T} \mu_2\phi_{xx}\big|_{x=0}\,dydt =0.
\end{multline}
Moreover, for $t\in (0,T]$
\begin{equation}\label{2.29}
\|u\|_{X^{\psi(x)}(\Pi_t^+)} +\|\mu_2\|_{L_2(B_t)}\leq c(T)\Bigl(\|u_0\|_{L_{2,+}^{\psi(x)}} +\|f_0\|_{L_1(0,t;L_{2,+}^{\psi(x)})}  +\|f_1\|_{L_{4/3}(0,t;L_{2,+}^{\psi^{3/2}(x)(\psi'(x))^{-1/2}})} \Bigr),
\end{equation}
and for a.e. $t\in (0,T)$
\begin{multline}\label{2.30}
\frac{d}{dt}\iint u^2(t,x,y)\psi(x)\,dxdy  + \iint (5u_{xx}^2 +3u_x^2 +u_y^2 -bu^2)\psi'\,dxdy  -
\iint (5u_x^2 +u^2)\psi'''\, dxdy \\+\iint u^2 \psi^{(5)}\, dxdy + \psi(0)\int_0^L \mu_2^2\,dy = 
2\iint f_0 u\psi\,dxdy 
-2\iint f_1 (u\psi)_x\,dxdy.
\end{multline}
If $f_1\equiv 0$, then in equality \eqref{2.30} one can put $\psi\equiv 1$.
\end{lemma}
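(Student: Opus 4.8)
The plan is to follow the scheme announced in Remark~\ref{R2.1}: first establish the identity \eqref{2.30} and the bound \eqref{2.29} for the smooth solutions furnished by Lemma~\ref{L2.4}, and then pass to the limit. I would approximate $u_0$, $f_0$, $f_1$ by functions from $\widetilde{\EuScript S}_{exp}(\overline{\Sigma}_+)$ (satisfying the compatibility conditions of Lemma~\ref{L2.4}) converging in $L_{2,+}^{\psi(x)}$, in $L_1(0,T;L_{2,+}^{\psi(x)})$ and in $L_{4/3}(0,T;L_{2,+}^{\psi^{3/2}(\psi')^{-1/2}})$ respectively, set $f\equiv f_0+f_{1x}$, and let $u$ be the corresponding smooth solution. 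Uniqueness of the limit in $L_2(\Pi_T^+)$ is already guaranteed by Lemma~\ref{L2.5}. The trace $\mu_2$ is nothing but $u_{xx}\big|_{x=0}$: in \eqref{2.28} the test function $\phi$ is only required to vanish together with $\phi_x$ at $x=0$, so integrating the term $-u\phi_{xxxxx}$ by parts leaves exactly the boundary contribution $-\iint_{B_T}\mu_2\phi_{xx}\big|_{x=0}$.

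The identity \eqref{2.30} I would obtain by multiplying \eqref{2.9} by $2u\psi(x)$ and integrating over $\Sigma_+$. The $y$-boundary terms vanish by \eqref{1.4}, the $x$-boundary terms are controlled by \eqref{1.3}. The only delicate computation is the fifth-order term: repeated integration by parts in $x$, using $u\big|_{x=0}=u_x\big|_{x=0}=0$, turns $-2\iint u_{xxxxx}u\psi$ into $\iint(5u_{xx}^2\psi'-5u_x^2\psi'''+u^2\psi^{(5)})+\psi(0)\int_0^L u_{xx}^2\big|_{x=0}$, the boundary term surviving precisely because only two conditions are imposed at $x=0$; this yields $\psi(0)\int_0^L\mu_2^2$. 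The third-order term gives $\iint(3u_x^2\psi'-u^2\psi''')$, the term $u_{xyy}$ gives $\iint u_y^2\psi'$ (one integration by parts in $y$ and one in $x$, using $u_y\big|_{x=0}=0$), and $bu_x$ gives $-b\iint u^2\psi'$. On the right, $2\iint f_{1x}u\psi=-2\iint f_1(u\psi)_x$ since $u\big|_{x=0}=0$. Collecting everything gives \eqref{2.30}.

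The main point is the a~priori bound \eqref{2.29}, and this is where I expect the real difficulty. Since $\psi'$ is admissible, \eqref{1.7} applied to $\psi'$ gives $|\psi'''|+|\psi^{(5)}|\leq c\psi'$, while \eqref{1.7} for $\psi$ gives $\psi'\leq c\psi$; hence the lower-order terms $\iint(5u_x^2+u^2)\psi'''$, $\iint u^2\psi^{(5)}$ and $b\iint u^2\psi'$ are majorized by $c\iint u_x^2\psi'+c\iint u^2\psi$, and $\iint u_x^2\psi'$ is absorbed into $\iint u_{xx}^2\psi'$ and $c\iint u^2\psi$ by \eqref{2.5}. The genuinely new term is $-2\iint f_1(u\psi)_x=-2\iint f_1 u_x\psi-2\iint f_1 u\psi'$, where the weight of $f_1$ must be matched to $\psi^{3/2}(\psi')^{-1/2}$ and the time integrability to $L_{4/3}$. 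Writing $\psi=\bigl(\psi^{3/2}(\psi')^{-1/2}\bigr)^{1/2}\bigl(\psi^{1/2}(\psi')^{1/2}\bigr)^{1/2}$ and using the Cauchy--Schwarz inequality gives
\begin{equation*}
\Bigl|\iint f_1 u_x\psi\Bigr|\leq \bigl\|f_1\bigr\|_{L_{2,+}^{\psi^{3/2}(\psi')^{-1/2}}}\Bigl(\iint u_x^2\psi^{1/2}(\psi')^{1/2}\Bigr)^{1/2}.
\end{equation*}
The last factor I would estimate by Lemma~\ref{L2.1} with $m=1$, $q=2$ (so $s=1/4$), $\psi_1\equiv\psi'$, $\psi_2\equiv\psi$ (the hypothesis $u\big|_{x=0}=0$ is available), which gives $\iint u_x^2\psi^{1/2}(\psi')^{1/2}\leq c\bigl(\iint(u_{xx}^2+u_y^2+u^2)\psi'\bigr)^{1/2}\bigl(\iint u^2\psi\bigr)^{1/2}$. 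It is essential that Lemma~\ref{L2.1} delivers the weight $\psi'$ on $u_{xx}$, which \eqref{2.5} alone could not; this is exactly why the two-weight form of that lemma is needed. Abbreviating $\|\cdot\|_W\equiv\|\cdot\|_{L_{2,+}^{\psi^{3/2}(\psi')^{-1/2}}}$, $E\equiv\iint u^2\psi$ and $G\equiv\iint(u_{xx}^2+u_y^2)\psi'$, the term is bounded by $c\|f_1\|_W(G+cE)^{1/4}E^{1/4}$, and Young's inequality with exponents $4$ and $4/3$ yields $\epsilon(G+cE)+c_\epsilon\|f_1\|_W^{4/3}(1+E)$, which both absorbs a fraction of the dissipation $G$ and produces exactly the $4/3$ power of $f_1$. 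The remaining piece $\iint f_1 u\psi'$ I would treat by the same splitting, using $\psi^{-3/2}(\psi')^{5/2}\leq c\psi$, and $2\iint f_0 u\psi\leq 2\|f_0\|_{L_{2,+}^\psi}E^{1/2}$. Integrating \eqref{2.30} in $t$, absorbing the small multiples of $G$, and applying Gronwall's lemma (exploiting the linearity of the problem to keep the dependence on the data linear) then gives \eqref{2.29} with a constant $c(T)$.

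Finally, the uniform bound \eqref{2.29} lets me extract weakly convergent subsequences of the approximations: $u$ converges to a solution in $X^{\psi(x)}(\Pi_T^+)$, the traces $\mu_2$ converge weakly in $L_2(B_T)$, \eqref{2.28} passes to the limit, and strong continuity in $t$ of $\iint u^2\psi$ follows from \eqref{2.30}. When $f_1\equiv 0$ the interpolation step is not needed, so the whole derivation may be repeated verbatim with $\psi\equiv 1$ (for which $\psi'=\psi'''=\psi^{(5)}=0$), and \eqref{2.30} reduces to $\frac{d}{dt}\iint u^2+\int_0^L\mu_2^2=2\iint f_0 u$. The hardest step is the $f_1$-term, where the two-weight interpolation of Lemma~\ref{L2.1} must be combined with Young's inequality to reconcile the weight $\psi^{3/2}(\psi')^{-1/2}$ with the available dissipation and to reach the $L_{4/3}$ integrability in time.
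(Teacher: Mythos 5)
Your proposal is correct and takes essentially the same route as the paper's own proof: smooth solutions from Lemma~\ref{L2.4} per Remark~\ref{R2.1}, multiplication of \eqref{2.9} by $2u\psi$ (or $2u$) to obtain \eqref{2.30} with $\mu_2\equiv u_{xx}\big|_{x=0}$, the two-weight interpolation inequality \eqref{2.2} with $m=1$, $q=2$, $\psi_1\equiv\psi'$, $\psi_2\equiv\psi$ combined with Young's inequality with exponents $4$ and $4/3$ to control the $f_1$-term (this is exactly the paper's estimate \eqref{2.31}), inequality \eqref{2.5} with $\psi$ replaced by $\psi'$ to absorb $\iint u_x^2\psi'''$ (the paper's \eqref{2.32}), and a standard closure/limit passage with uniqueness from Lemma~\ref{L2.5}. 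Your integrations by parts, including the boundary contribution $\psi(0)\int_0^L u_{xx}^2\big|_{x=0}\,dy$ and the weight factorization $\psi=\bigl(\psi^{3/2}(\psi')^{-1/2}\bigr)^{1/2}\bigl(\psi^{1/2}(\psi')^{1/2}\bigr)^{1/2}$, agree with the paper's computation, so there is nothing to correct.
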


\begin{proof}
Multiplying \eqref{2.9} by $2u(t,x,y)\rho(x)$, where either $\rho(x)\equiv \psi(x)$ or $\rho(x)\equiv 1$, and integrating over $\Sigma_+$ we derive equality \eqref{2.30} for $\mu_2 \equiv u_{xx}\big|_{x=0}$, where $\psi$ is substituted by $\rho$.
Here according to \eqref{2.2} (and since $\psi' \leq c\psi$) for arbitrary $\varepsilon >0$
\begin{multline}\label{2.31}
\Bigl|\iint f_1(u\psi)_x\,dxdy\Bigr| \leq c \|(|u_{x}|+|u|)(\psi')^{1/4} \psi^{1/4}\|_{L_{2,+}}\|f_1 \psi^{3/4} (\psi')^{-1/4}\|_{L_{2,+}} \\ \leq 
c_1 \Bigl[ \bigl\|\bigl(|u_{xx}|+|u_y|\bigr)(\psi')^{1/2}\|_{L_{2,+}}^{1/2}\|u\psi^{1/2}\|_{L_{2,+}}^{1/2} +
\|u\psi^{1/2}\|_{L_{2,+}}\Bigr] \|f_1 \psi^{3/4} (\psi')^{-1/4}\|_{L_{2,+}} \\ \leq
\varepsilon \iint \bigl(u_{xx}^2+u_y^2 \bigr)\psi'\,dxdy +
c(\varepsilon)\|f_1\|_{L_{2,+}^{\psi^{3/2}(x)(\psi'(x))^{-1/2}}}^{4/3}\Bigl(\iint u^2\psi\,dxdy\Bigr)^{1/3}  \\ +
c_1 \|f_1\|_{L_{2,+}^{\psi^{3/2}(x)(\psi'(x))^{-1/2}}} \Bigl(\iint u^2\psi\,dxdy\Bigr)^{1/2},
\end{multline}
and according to \eqref{2.5} (where $\psi$ is substituted by $\psi'$)
\begin{equation}\label{2.32}
\Bigl|\iint u_x^2 \psi'''\,dxdy\Bigr| \leq \varepsilon \iint u_{xx}^2\psi'\,dxdy  +
c(\varepsilon) \iint u^2\psi\,dxdy.
\end{equation}
Equality \eqref{2.30} and inequalities \eqref{2.31}, \eqref{2.32} imply that for smooth solutions
\begin{equation}\label{2.33}
\|u\|_{X^{\psi(x)}(\Pi_T^+)} + \|u_{xx}\big|_{x=0}\|_{L_2(B_T)} \leq c.
\end{equation}
The end of the proof is standard.
\end{proof}

\begin{remark}\label{R2.2}
The method of construction of weak solution in Lemma~\ref{L2.7} via closure ensures that $u|_{x=0}= u_x|_{x=0} =0$ in the trace sense (this fact can be also easily derived from equality \eqref{2.28}, since $u_{xx} \in L_2((0,T)\times (0,1)\times (0,L))$). Moreover, if it is known, in addition, that $u_{xx} \in C_w([0,x_0];L_2(B_T))$ for certain $x_0>0$, then equality \eqref{2.28} yields that $u_{xx}|_{x=0}=\mu_2$ (for example, one can put $\phi \equiv x\eta(1-x/h)\omega(t,y)$ for $h>0$ and any $\omega\in C_0^\infty(B_T)$ and then tend $h$ to zero).
\end{remark}

\begin{lemma}\label{L2.8}
Let $\psi(x)$ be an admissible weight function such that $\psi'(x)$ is also admissible, $u_0\in \widetilde H_{+}^{1,\psi(x)}$, $u_0(0,y)\equiv 0$, $f\equiv f_0+f_1$, where $f_0\in L_2(0,T; \widetilde H_+^{1,\psi(x)})$, $f_1\in L_2(0,T;L_{2,+}^{\psi^2(x)/\psi'(x)})$ for certain $T>0$. Then there exist a strong solution $u\in X^{1,\psi(x)}(\Pi_T^+)$ to problem \eqref{2.9}, \eqref{1.2}--\eqref{1.4} and a function $\mu_4\in L_2(B_T)$, such that for any $t\in (0,T)$
\begin{equation}\label{2.34}
\|u\|_{X^{1,\psi(x)}(\Pi_t^+)} +\|\mu_4\|_{L_2(B_t)} 
\leq c(T) \Bigl(\|u_0\|_{\widetilde H^{1,\psi(x)}_+}
+\|f_0\|_{L_2(0,t;\widetilde H^{1,\psi(x)}_+)}
+\|f_1\|_{L_2(0,t;L_{2,+}^{\psi^2(x)/\psi'(x)})}\Bigr),
\end{equation}
and for a.e. $t\in (0,T)$
\begin{multline}\label{2.35}
\frac{d}{dt}\iint(u_{xx}^2+u_x^2+u_y^2)\psi \,dxdy 
+\int_0^L (\mu_4^2\psi +4\mu_4u_{xxx}\psi' +2\mu_4u_{xx}\psi'' -2\mu_4u_{xx}\psi -3u_{xxx}^2\psi'' \\-2u_{xxx}u_{xx}\psi'''  +4u_{xxx}u_{xx}\psi'+u_{xx}^2\psi^{(4)}+(b+1)u_{xx}^2\psi  - 4u_{xx}^2\psi'')\big|_{x=0}\,dy \\
+\iint(5u_{xxxx}^2+6u^2_{xxy}+u^2_{yy} +8u_{xxx}^2 +4u_{xy}^2 + 3(b+1)u_{xx}^2 -bu_x^2 -bu_y^2)\psi' \,dxdy \\ 
 = \iint(5u^2_{xxx} +5u_{xy}^2 +6u_{xx}^2 +u_y^2 +(b+1)u_x^2)\psi'''\,dxdy \\ -
 \iint (u_{xx}^2+u_x^2 +u_y^2) \psi^{(5)}\, dxdy 
+2\iint(f_{0xx}u_{xx}+f_{0x}u_x+f_{0y}u_y)\psi \,dxdy  \\-
2\int_0^L  [(f_0(u_{xx}\psi)_x -f_{0x}u_{xx}\psi]\big|_{x=0}\,dy 
+2\iint f_1[(u_{xx}\psi)_{xx}-(u_x\psi)_x-u_{yy}\psi]\,dxdy.
\end{multline}
If $f_1\equiv 0$, then in equality \eqref{2.35} one can put $\psi(x)\equiv 1$.
\end{lemma}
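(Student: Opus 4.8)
Following Remark~\ref{R2.1}, I would first establish the a priori estimate \eqref{2.34} for the smooth solutions furnished by Lemma~\ref{L2.4}, and then obtain the strong solution together with the function $\mu_4$ by the same closure procedure used at the end of the proof of Lemma~\ref{L2.7}. The identity \eqref{2.35} is the $H^1$-level analogue of \eqref{2.30}: instead of testing \eqref{2.9} against $2u\psi$, one tests it against the multiplier $2\bigl[(u_{xx}\psi)_{xx}-(u_x\psi)_x-u_{yy}\psi\bigr]$. Concretely, differentiating $\iint(u_{xx}^2+u_x^2+u_y^2)\psi\,dxdy$ in $t$ and integrating by parts twice in $x$ in the first term, once in $x$ in the second, and once in $y$ in the third (the $y$-boundary contributions vanishing by \eqref{1.4}) produces $2\iint u_t\bigl[(u_{xx}\psi)_{xx}-(u_x\psi)_x-u_{yy}\psi\bigr]\,dxdy$ plus a boundary integral at $x=0$. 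Substituting $u_t=u_{xxxxx}-u_{xxx}-u_{xyy}-bu_x+f_0+f_1$ from \eqref{2.9} and carrying out the remaining integrations by parts in $x$ yields \eqref{2.35}, with $\mu_4\equiv u_{xxxx}\big|_{x=0}$; the asymmetry between the $f_0$- and $f_1$-contributions simply records that for $f_0\in\widetilde H_+^{1,\psi(x)}$ one may move the two $x$-derivatives and the $y$-derivative off the multiplier onto $f_0$ (producing the interior term $2\iint(f_{0xx}u_{xx}+f_{0x}u_x+f_{0y}u_y)\psi$ and a boundary term), whereas for $f_1$, which lives only in the heavier-weighted space $L_{2,+}^{\psi^2/\psi'}$, one keeps the derivatives on $u$.

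The next step is to bound the right-hand side of \eqref{2.35} so as to reach a Gr\"onwall-type inequality for $E(t)\equiv\iint(u_{xx}^2+u_x^2+u_y^2)\psi\,dxdy$. The interior terms carrying $\psi'''$ and $\psi^{(5)}$ are controlled using $|\psi'''|\le c\psi'$ and $|\psi^{(5)}|\le c\psi'$ (valid since $\psi'$ is admissible) together with inequality \eqref{2.5} of Lemma~\ref{L2.2} applied, with the admissible weight $\psi'$, to the relevant derivatives of $u$; for instance $\iint u_{xxx}^2\psi'''$ is handled by interpolating $\iint u_{xxx}^2\psi'$ between $\iint u_{xxxx}^2\psi'$ and $\iint u_{xx}^2\psi$. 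A small fraction of the genuinely positive dissipation $\iint\bigl(5u_{xxxx}^2+6u_{xxy}^2+u_{yy}^2+8u_{xxx}^2+4u_{xy}^2\bigr)\psi'\,dxdy$ then absorbs everything, while the $b$-dependent terms $3(b+1)u_{xx}^2\psi'$, $-bu_x^2\psi'$, $-bu_y^2\psi'$ are simply bounded by $cE(t)$ via $\psi'\le c\psi$. For the sources, $2\iint(f_{0xx}u_{xx}+f_{0x}u_x+f_{0y}u_y)\psi\,dxdy$ is estimated by Cauchy--Schwarz against $\|f_0\|_{\widetilde H_+^{1,\psi(x)}}$, while in the $f_1$-term one writes $f_1=\bigl(f_1\psi(\psi')^{-1/2}\bigr)(\psi')^{1/2}$ and applies Young's inequality: the factor $f_1\psi(\psi')^{-1/2}$ matches the weight $\psi^2/\psi'$ of the space containing $f_1$, and the remaining fourth-order factors weighted by $(\psi')^{1/2}$ (the leading one being $u_{xxxx}(\psi')^{1/2}$) are absorbed into the $\psi'$-dissipation.

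The main obstacle is the boundary integral at $x=0$ in \eqref{2.35}. Since $u\big|_{x=0}=u_x\big|_{x=0}=0$ forces $u_y=u_{xy}=u_{yy}=0$ there as well, the only surviving traces are $u_{xx}$, $u_{xxx}$ and $\mu_4=u_{xxxx}$, and the boundary integrand is a quadratic form in these three with coefficients built from $\psi(0),\psi'(0),\dots$. The term $\mu_4^2\psi(0)$ carries the correct positive sign because $\psi(0)>0$, and the plan is to show that, after completing squares, the full boundary form dominates a fixed positive multiple of $\int_0^L\mu_4^2\,dy$, which is precisely what yields the $\|\mu_4\|_{L_2(B_t)}$ contribution in \eqref{2.34}. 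The indefinite cross terms $\mu_4u_{xxx}$, $\mu_4u_{xx}$ and $u_{xxx}u_{xx}$ are split off by Young's inequality (using $\psi'(0)\le c\psi(0)$ etc. from \eqref{1.7}); the lower-order traces are then estimated by the trace inequality \eqref{2.6} applied with the admissible weight $\psi'$, namely $\int_0^L u_{xxx}^2\big|_{x=0}\,dy\le c\bigl(\iint u_{xxxx}^2\psi'\bigr)^{3/4}\bigl(\iint u_{xx}^2\psi'\bigr)^{1/4}+c\iint u_{xx}^2\psi'$ and the analogous bound for $u_{xx}\big|_{x=0}$ in terms of $\iint u_{xxx}^2\psi'$ and $\iint u_x^2\psi'$. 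Since $\iint u_{xxxx}^2\psi'$ and $\iint u_{xxx}^2\psi'$ are exactly the dissipative quantities on the left of \eqref{2.35} and $\iint u_{xx}^2\psi',\ \iint u_x^2\psi'\le cE(t)$, a further application of Young's inequality absorbs these traces while leaving the coercive $\mu_4^2$-term intact. Once the boundary integral is disposed of in this way, Gr\"onwall's inequality gives \eqref{2.34} for the smooth solutions, and the closure argument of Lemma~\ref{L2.7} then produces the strong solution $u\in X^{1,\psi(x)}(\Pi_T^+)$ and $\mu_4\in L_2(B_T)$ with $\mu_4=u_{xxxx}\big|_{x=0}$ in the trace sense.
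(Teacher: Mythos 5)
Your proposal is correct and follows essentially the same route as the paper: the same multiplier $2\bigl[(u_{xx}\psi)_{xx}-(u_x\psi)_x-u_{yy}\psi\bigr]$ yielding \eqref{2.35} with $\mu_4\equiv u_{xxxx}\big|_{x=0}$, the trace inequality \eqref{2.6} with weight $\psi'$ for the boundary terms (the paper's \eqref{2.36}), interpolation via \eqref{2.5} for the $\psi'''$-terms (its \eqref{2.37}), the weighted Young splitting $f_1=\bigl(f_1\psi(\psi')^{-1/2}\bigr)(\psi')^{1/2}$ (its \eqref{2.38}), and closure from the smooth solutions of Lemma~\ref{L2.4}. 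The only point left implicit in your write-up, which the paper states explicitly, is that the Gr\"onwall bound for $E(t)$ must be combined with the zeroth-order estimate \eqref{2.33} of Lemma~\ref{L2.7} (applicable since $L_{2,+}^{\psi^2/\psi'}\subset L_{2,+}^{\psi}$) to control the $\iint u^2\psi\,dxdy$ part of the $X^{1,\psi(x)}$-norm.
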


\begin{proof}
Multiplying \eqref{2.9} by $2\bigl(u_{xx}\rho(x)\bigr)_{xx} -2\bigl(u_x\rho(x)\bigr)_x -2u_{yy}\rho(x)$, where either $\rho(x)\equiv \psi(x)$ or $\rho(x)\equiv 1$, and integrating over $\Sigma_+$ we derive equality \eqref{2.35} for $\mu_4 \equiv u_{xxxx}\big|_{x=0}$, where $\psi$ is substituted by $\rho$.
Here according to \eqref{2.6} for an arbitrary $\varepsilon>0$
\begin{equation}\label{2.36}
\int_0^L u_{xxx}^2\big|_{x=0}\,dy \leq \varepsilon\iint u_{xxxx}^2\psi'\,dxdy + c(\varepsilon)\iint u_{xx}^2\psi\,dxdy,
\end{equation}
similarly to \eqref{2.32}
\begin{equation}\label{2.37}
\Bigl|\iint (u_{xxx}^2 + u_{xy}^2)\psi'''\, dxdy\Bigr| \leq \varepsilon\iint (u_{xxxx}^2 +u_{xxy}^2)\psi' \,dxdy + c(\varepsilon)\iint (u_{xx}^2 + u_y^2)\psi\, dxdy,
\end{equation}
and
\begin{equation}\label{2.38}
\Bigl|\iint f_1[(u_{xx}\psi)_{xx} -u_{yy}\psi]\,dxdy\Bigr|  \leq
\varepsilon \iint \bigl(u_{xxxx}^2+u_{yy}^2 +u_{xx}^2\bigr)\psi'\,dxdy +
c(\varepsilon)\iint f_1^2 \psi^2 (\psi')^{-1} \,dxdy.
\end{equation}
Equality \eqref{2.35} and inequalities \eqref{2.36}--\eqref{2.38} together with \eqref{2.33} imply that for smooth solutions
$$
\|u\|_{X^{1,\psi(x)}(\Pi_T^+)} + \|u_{xxxx}\big|_{x=0}\|_{L_2(B_T)} \leq c.
$$
\end{proof}

\begin{lemma}\label{L2.9}
Let the hypothesis of Lemma~\ref{L2.8} be satisfied for $\psi(x) \equiv e^{2\alpha x}$ for certain $\alpha>0$. Let $g\in C^1(\mathbb R)$, $g(0)=0$. Consider the strong solution $u\in X^{1,\psi(x)}(\Pi_T^+)$ to problem \eqref{2.9}, \eqref{1.2}--\eqref{1.4}. Then for a.e. $t\in (0,T)$
\begin{multline}\label{2.39}
\frac{d}{dt}\iint g^*(u)\rho\,dxdy +\iint g'(u)u_x(u_{xxxx} -u_{xx} -u_{yy})\rho\,dxdy  +\iint g(u)(u_{xxxx} -u_{xx} -u_{yy})\rho'\,dxdy \\
-b\iint g^*(u)\rho'\,dxdy 
= \iint g(u)f\rho \,dxdy,
\end{multline}
where either $\rho(x)\equiv 1$ or $\rho(x)$ is an admissible weight function such that $\rho(x) \leq c\psi(x)$ $\forall x\geq 0$.
\end{lemma}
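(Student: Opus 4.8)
The plan is to follow the strategy announced in Remark~\ref{R2.1}: first establish \eqref{2.39} for the smooth solutions furnished by Lemma~\ref{L2.4}, where every manipulation is legitimate, and then pass to the limit towards the strong solution $u\in X^{1,\psi(x)}(\Pi_T^+)$ on the basis of estimate~\eqref{2.34}. For a smooth solution the starting point is to multiply equation \eqref{2.9} by $g(u)\rho$ and integrate over $\Sigma_+$. The time derivative is handled by the chain rule $\partial_t g^*(u)=g(u)u_t$, which converts $\iint u_t g(u)\rho\,dxdy$ into $\frac{d}{dt}\iint g^*(u)\rho\,dxdy$, while the right-hand side directly produces $\iint g(u)f\rho\,dxdy$.

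The core of the smooth computation is a single integration by parts in $x$ applied to each of the four spatial terms. Since $\partial_x[g(u)\rho]=g'(u)u_x\rho+g(u)\rho'$, the term $-\iint u_{xxxxx}g(u)\rho$ becomes $\iint u_{xxxx}\bigl(g'(u)u_x\rho+g(u)\rho'\bigr)$, and in the same way $\iint u_{xxx}g(u)\rho$ and $\iint u_{xyy}g(u)\rho$ generate the $u_{xx}$ and $u_{yy}$ contributions with the required signs; the convective term is treated through $u_x g(u)=\partial_x g^*(u)$, which yields $-b\iint g^*(u)\rho'$. All boundary contributions at $x=0$ vanish: conditions \eqref{1.3} give $u|_{x=0}=u_x|_{x=0}=0$, hence $g(u)|_{x=0}=g(0)=0$ and $g^*(u)|_{x=0}=0$, while the rapid decay of functions in $\widetilde{\EuScript S}_{exp}(\overline{\Sigma}_+)$ kills the endpoint at $+\infty$. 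I stress that no integration by parts in $y$ is carried out — the factors $u_{yy}$ are simply retained — so conditions \eqref{1.4} are not needed here. Collecting the terms produces \eqref{2.39} for smooth $u$, uniformly in the two admissible choices $\rho\equiv1$ (where $\rho'\equiv0$) and $\rho$ an admissible weight with $\rho\le c\psi$.

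For the passage to the limit I would integrate the smooth identity over $(0,t)$ and take the approximating smooth solutions $u_n\to u$, which converge strongly in $X^{1,\psi(x)}(\Pi_T^+)$ since the problem is linear and differences of such solutions are Cauchy by \eqref{2.34}. The decisive structural fact is that for the exponential weight $\psi=e^{2\alpha x}$ one has $\psi'=2\alpha\psi$, so $\psi$, $\psi'$, $\rho$ and $\rho'$ are all mutually comparable; together with the embedding $\widetilde H^1_+\hookrightarrow L_{\infty,+}$ furnished by \eqref{2.3} — which yields $\sup_n\|u_n\|_{L_\infty(\Pi_T^+)}<\infty$ and uniform convergence of $g(u_n)$ and $g'(u_n)$ — every term reduces, after the Cauchy--Schwarz inequality, to a product of a factor bounded in $C([0,T];L_{2,+}^{\psi(x)})$ and a factor convergent in $L_2(0,T;L_{2,+}^{\psi'(x)})$.

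The step I expect to be the main obstacle is the convergence of the genuinely nonlinear integrals $\iint g'(u)u_x(u_{xxxx}-u_{xx}-u_{yy})\rho$, which couple a first-order derivative with fourth- and second-order ones. Here one writes, for instance, $g'(u_n)u_{nx}u_{nxxxx}\rho=\bigl[g'(u_n)u_{nx}\rho^{1/2}\bigr]\bigl[u_{nxxxx}\rho^{1/2}\bigr]$ and uses that the first bracket converges in $C([0,T];L_{2,+})$ — because $g'(u_n)\to g'(u)$ in $L_{\infty,+}$ and $u_{nx}\rho^{1/2}\to u_x\rho^{1/2}$ in $C([0,T];L_{2,+})$ — while the second converges in $L_2(0,T;L_{2,+})$; the comparability $\rho\le c\psi\le c\psi'$ is precisely what keeps these weighted norms finite. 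The finiteness of the same integrals for the limit $u$ (for a.e.\ $t$, where $u(t)\in\widetilde H_+^{2,\psi'(x)}$) follows from the same bounds. Differentiating the resulting integral identity in $t$ then gives \eqref{2.39} for a.e.\ $t\in(0,T)$.
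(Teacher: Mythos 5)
Your proposal is correct and follows exactly the paper's own route: the paper's proof consists precisely of multiplying \eqref{2.9} by $g(u)\rho(x)$, integrating (the single integration by parts in $x$ with boundary terms killed by $g(0)=g^*(0)=0$, as you spell out), and then closing via smooth approximations, justified by $X^{1,\psi(x)}(\Pi_T^+)\subset L_\infty(\Pi_T^+)$ and $\psi'\sim\psi$ for the exponential weight. You merely make explicit the details (vanishing boundary terms, the Cauchy property from \eqref{2.34}, the splitting $g'(u_n)u_{nx}\rho^{1/2}\cdot u_{nxxxx}\rho^{1/2}$) that the paper leaves to the reader, and your observation that no integration by parts in $y$ is needed is consistent with the paper's argument.
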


\begin{proof}
In the smooth case equality \eqref{2.39} is obtained via multiplication of \eqref{2.9} by $g(u(t,x,y))\psi(x)$ and subsequent integration and in the general case via closure, which here is easily justified since  $X^{1,\psi(x)}(\Pi_T^+) \subset L_\infty(\Pi_T^+)$ and $\psi' \sim \psi$. 
\end{proof}

\section{Existence of solutions}\label{S3}

First of all establish an auxiliary result on existence of weak solutions for a regularized equation.

\begin{lemma}\label{L3.1}
Let $g\in C^1(\mathbb R)$, $g(0)=0$, $|g'(u)|\leq c\ \forall u\in\mathbb R$, $\psi(x) \equiv e^{2\alpha x}$ for certain $\alpha>0$, $u_0\in L_{2,+}^{\psi(x)}$, $f\in L_1(0,T;L_{2,+}^{\psi(x)})$. Then problem \eqref{1.1}--\eqref{1.4} has a unique weak solution $u\in X^{\psi(x)}(\Pi_T^+)$.
\end{lemma}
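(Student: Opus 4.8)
The plan is to exploit the fact that here $g'$ is \emph{bounded}, so that the nonlinearity is globally Lipschitz, and to reduce the nonlinear problem to the linear theory of Lemma~\ref{L2.7} by a contraction mapping. First I would rewrite the nonlinear term as a pure $x$-derivative, $g'(u)u_x=\bigl(g(u)\bigr)_x$, so that problem \eqref{1.1}--\eqref{1.4} becomes the linear problem \eqref{2.9}, \eqref{1.2}--\eqref{1.4} with source $f=f_0+f_{1x}$, where $f_0\equiv f$ and $f_1\equiv-g(u)$. For a given $v\in X^{\psi(x)}(\Pi_{t_0}^+)$ (with $t_0\in(0,T]$ to be fixed) I define $\Lambda v\equiv u$ to be the weak solution furnished by Lemma~\ref{L2.7} with $f_0\equiv f$, $f_1\equiv-g(v)$. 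A fixed point $u=\Lambda u$, tested against any $\phi$ with the extra condition $\phi_{xx}\big|_{x=0}=0$, satisfies \eqref{1.11}: the $\mu_2$-boundary term in \eqref{2.28} drops and $-f_1\phi_x=g(u)\phi_x$, so the fixed point is exactly a weak solution in the sense of Definition~\ref{D1.1}.

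To see that $\Lambda$ is well defined on the Banach space $X^{\psi(x)}(\Pi_{t_0}^+)$ I would use two elementary facts. Since $g(0)=0$ and $|g'|\leq c$, the mean value theorem gives $|g(v)|\leq c|v|$ and $|g(v_1)-g(v_2)|\leq c|v_1-v_2|$ pointwise. Moreover, for the exponential weight $\psi(x)\equiv e^{2\alpha x}$ one has $\psi'(x)=2\alpha\psi(x)$, hence $\psi^{3/2}(\psi')^{-1/2}=(2\alpha)^{-1/2}\psi$, so the weight required of $f_1$ in Lemma~\ref{L2.7} is, up to a constant, just $\psi$ itself. Since $v\in C([0,t_0];L_{2,+}^{\psi(x)})$, the bound $|g(v)|\leq c|v|$ yields $g(v)\in L_\infty(0,t_0;L_{2,+}^{\psi(x)})\subset L_{4/3}(0,t_0;L_{2,+}^{\psi^{3/2}(x)(\psi'(x))^{-1/2}})$ on the finite interval, so $\Lambda v$ exists and lies in $X^{\psi(x)}(\Pi_{t_0}^+)$. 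The map sends the whole space into itself because $g(0)=0$ makes $\Lambda 0$ the bounded linear solution driven by $f$ alone.

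The contraction estimate is the heart of the argument. For $v_1,v_2\in X^{\psi(x)}(\Pi_{t_0}^+)$ the difference $\Lambda v_1-\Lambda v_2$ solves the linear problem with zero initial data and $f_1\equiv-(g(v_1)-g(v_2))$, so \eqref{2.29} together with the weight identity above gives
\[
\|\Lambda v_1-\Lambda v_2\|_{X^{\psi(x)}(\Pi_{t_0}^+)}\leq c(T)\,\|g(v_1)-g(v_2)\|_{L_{4/3}(0,t_0;L_{2,+}^{\psi(x)})}.
\]
Combining the Lipschitz bound for $g$ with the embedding $L_\infty(0,t_0)\hookrightarrow L_{4/3}(0,t_0)$, whose norm equals $t_0^{3/4}$, I obtain $\|\Lambda v_1-\Lambda v_2\|_{X^{\psi(x)}(\Pi_{t_0}^+)}\leq c(T,\alpha)\,t_0^{3/4}\,\|v_1-v_2\|_{X^{\psi(x)}(\Pi_{t_0}^+)}$. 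Choosing $t_0$ so small that $c(T,\alpha)\,t_0^{3/4}<1$ makes $\Lambda$ a contraction, and the Banach fixed point theorem produces a unique $u\in X^{\psi(x)}(\Pi_{t_0}^+)$ solving the problem on $[0,t_0]$.

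Finally I would globalize. The crucial point is that the step $t_0$ depends only on $T$, $\alpha$ and the Lipschitz constant $c$, \emph{not} on the size of the data, precisely because $g'$ is bounded; hence the local construction can be restarted from $u(t_0,\cdot,\cdot)\in L_{2,+}^{\psi(x)}$ on $[t_0,2t_0]$, and so on, reaching $T$ in finitely many equal steps, each contributing a finite norm, and yielding the global $u\in X^{\psi(x)}(\Pi_T^+)$; uniqueness on $[0,T]$ follows from the uniqueness on each subinterval. The main obstacle I anticipate is purely the bookkeeping of the weighted $L_{4/3}$-in-time norm of $f_1=-g(v)$: everything hinges on the exponential weight collapsing $\psi^{3/2}(\psi')^{-1/2}$ to a multiple of $\psi$ and on the finite-interval embedding supplying the gaining factor $t_0^{3/4}$, after which \eqref{2.29} closes the contraction automatically.
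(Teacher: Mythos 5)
Your proposal is correct and follows essentially the same route as the paper's own proof: a contraction argument based on Lemma~\ref{L2.7} with $f_0\equiv f$, $f_1\equiv \pm g(v)$, exploiting that for $\psi(x)\equiv e^{2\alpha x}$ one has $\psi^{3/2}(\psi')^{-1/2}\sim\psi$ and that the $L_{4/3}$-in-time norm supplies the factor $t_0^{3/4}$, after which the solution is extended to all of $[0,T]$ because the contraction constant is uniform with respect to the data. Your additional check that a fixed point satisfies \eqref{1.11} (the $\mu_2$-term in \eqref{2.28} dropping for test functions with $\phi_{xx}\big|_{x=0}=0$) is a welcome detail that the paper leaves implicit.
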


\begin{proof}
We apply the contraction principle. For $t_0\in(0,T]$ define a mapping $\Lambda$ on $X^{\psi(x)}(\Pi_{t_0}^+)$ as follows: $u=\Lambda v\in X^{\psi(x)}(\Pi_{t_0}^+)$ is a weak solution to a linear problem
\begin{equation}\label{3.1}
u_t -u_{xxxxx}+u_{xxx}+u_{xyy} +bu_x =f-\bigl(g(v)\bigr)_x
\end{equation}
in $\Pi_{t_0}^+$ with initial and boundary conditions \eqref{1.2}--\eqref{1.4}.

Note that $\psi^{3/2}(\psi')^{-1/2} \sim \psi$, $|g(v)|\leq c|v|$ and, therefore,
Lemma~\ref{L2.7} (where $f_0\equiv f$, $f_1 \equiv g(v)$) provides that the mapping $\Lambda$ exists. Moreover, for functions 
$v,\widetilde{v}\in X^{\psi(x)}(\Pi_{t_0}^+)$
according to inequality \eqref{2.29} 
\begin{gather*}
\|\Lambda v\|_{X^{\psi(x)}(\Pi_{t_0}^+)} \leq c(T)
\Bigl(\|u_0\|_{L_{2,+}^{\psi(x)}} +\|f\|_{L_1(0,T;L_{2,+}^{\psi(x)})} +
t_0^{3/4}\|v\|_{X^{\psi(x)}(\Pi_{t_0}^+)}\Bigr),\\
\|\Lambda v-\Lambda\widetilde{v}\|_{X^{\psi(x)}(\Pi_{t_0}^+)}\leq
c(T) t_0^{3/4}\|v-\widetilde{v}\|_{X^{\psi(x)}(\Pi_{t_0}^+)},
\end{gather*}
whence first the local result succeeds. Next, since the constant in the right-hand side in the above inequalities is uniform with respect to $u_0$ and$f$, one can extend the solution to the whole time segment $[0,T]$ by the standard argument.
\end{proof}

Now we pass to the results of existence in Theorem~\ref{T1.1}.

\begin{proof}[Proof of Existence Part of Theorem~\ref{T1.1}]
For $h\in (0,1]$ consider a set of initial-boundary value problems
\begin{equation}\label{3.2}
u_t -u_{xxxxx} +u_{xxx}+u_{xyy} +bu_x+g'_h(u)u_x = f_h,
\end{equation}
with an initial condition
\begin{equation}\label{3.3}
u\big|_{t=0} = u_{0h}(x)
\end{equation}
and boundary conditions \eqref{1.3}, \eqref{1.4}, where 
\begin{equation}\label{3.4}
f_h(t,x,y) \equiv f(t,x,y)\eta(1/h-x), \quad u_{0h}(x,y)\equiv u_0(x)\eta(1/h-x),
\end{equation}
\begin{equation}\label{3.5}
g'_h(u)\equiv g'(u) \eta (2-h|u|),\quad g_h(u) \equiv \int_0^u g'_h(\theta)\, d\theta.
\end{equation}
Note that $g_h(u)= g(u)$ if $|u|\leq 1/h$, $g'_h(u) =0$ if $|u|\geq 2/h$, $|g'_h(u)| \leq c(h)$ $\forall u$ and the functions $g_h$ satisfy inequality \eqref{1.12} uniformly with respect to $h$.

Lemma~\ref{L3.1} implies that there exists a unique solution to this problem $u_h\in X^{e^{2\alpha x}}(\Pi_T^+)$ for any $\alpha>0$.

Next, establish appropriate estimates for functions $u_h$ uniform with respect to~$h$ (we drop the subscript $h$ in intermediate steps for simplicity). First, note that $g'(u)u_x \in L_1(0,T;L_{2,+}^{\psi(x)})$ and so the hypothesis of Lemma~\ref{L2.7} is satisfied (for $f_1=f_2\equiv 0$). Then equality \eqref{2.30} provides that for both for $\rho(x)\equiv 1$ and $\rho(x) \equiv \psi(x)$
\begin{multline}\label{3.6}
\frac{d}{dt}\iint u^2\rho\,dxdy  + \iint (5u_{xx}^2 +3u_x^2 +u_y^2 -bu^2)\rho'\,dxdy   -
\iint (5u_x^2 +u^2)\rho'''\, dxdy +\iint u^2 \rho^{(5)}\, dxdy \\+ \rho(0)\int_0^L \mu_2^2\,dy = 
2\iint fu\rho\,dxdy + 2\iint \bigl(g'(u)u\bigr)^* \rho'\, dxdy.
\end{multline}
Choosing $\rho\equiv 1$, we obtain, that uniformly with respect to $h$ (and also uniformly with respect to $L$)
\begin{equation}\label{3.7}
\|u_h\|_{C([0,T];L_{2,+})} \leq c.
\end{equation}
Now choose $\rho(x) \equiv \psi(x)$. Note that uniformly with respect to $h$
\begin{equation}\label{3.8}
\bigl|\bigl(g_h'(u)u\bigr)^*\bigr| \leq c|u|^{p+2}.
\end{equation}
Let$q=p+2$, $s=s(0,q)$ from \eqref{2.1}, $\psi_1(x) \equiv \psi'(x)$, $\psi_2(x) \equiv \bigl(\psi'(x)\bigr)^{\frac{2(1-qs)}{q(1-2s)}}$ (note that $qs=3p/8<1$). 
Applying interpolating inequality \eqref{2.2}, we obtain that (and this is the central item of the proof)
\begin{multline}\label{3.9}
\iint |u|^{p+2}\psi'\,dxdy = \iint |u|^{q} \psi_1^{qs} \psi_2^{q(\frac12-s)} \,dxdy \\ \leq 
c\Bigl(\iint (u_{xx}^2 + u_y^2 +u^2)\psi_1 \,dxdy\Bigr)^{qs} \Bigl(\iint u^2 \psi_2 \,dxdy\Bigr)^{q(\frac12-s)} \\ =
c\Bigl(\iint (u_{xx}^2 + u_y^2 +u^2)\psi' \,dxdy\Bigr)^{qs} \Bigl(\iint (u^2\psi')^{\frac{2(1-qs)}{q(1-2s)}} u^{\frac{2(q-2)}{q(1-2s)}}\, dxdy \Bigr)^{q(\frac12-s)} \\ \leq 
c\Bigl(\iint (u_{xx}^2 + u_y^2 +u^2)\psi' \,dxdy\Bigr)^{3p/8} \Bigl(\iint u^2\psi' \,dxdy\Bigr)^{(8-3p)/8} \Bigl(\iint u^2 \,dxdy\Bigr)^{p/2}
\end{multline}
(note that here the constant $c$ is also uniform with respect to $L$ in the cases a) and c)).
Since the norm of the functions $u_h$ in the space $L_{2,+}$ is already  estimated in \eqref{3.7}, it follows from \eqref{3.6}, \eqref{3.8}, \eqref{3.9} (also with use of \eqref{2.32}) that uniformly with respect to $h$
\begin{equation}\label{3.10}
\|u_h\|_{X^{\psi(x)}(\Pi_T^+)} \leq c.
\end{equation}

Write the analogue of \eqref{3.6}, where $\rho(x)$ is substituted by $\rho_0(x-x_0)$ for any $x_0\geq 0$. Then it easily follows that (see \eqref{1.10})
\begin{equation}\label{3.11}
\lambda^+ (u_{hxx};T) + \lambda^+ (u_{hy};T)\leq c. 
\end{equation}
Let $\Sigma_n = (0,n)\times (0,L)$. It follows from \eqref{3.11} and \eqref{2.3} that uniformly with respect to $h$
\begin{equation}\label{3.12}
\|u_h\|_{L_{8/3}(0,T;L_\infty(\Sigma_n))} \leq c(n),
\end{equation}
and, consequently,
\begin{equation}\label{3.13}
\|g_h(u_h)\|_{L_{8/(3p)}(0,T;L_2(\Sigma_n))} \leq c(n).
\end{equation}
Then from equation \eqref{1.1} itself it follows that uniformly with respect to $h$
\begin{equation}\label{3.14}
\|u_{ht}\|_{L_1(0,T;H^{-5}(\Sigma_n))}\leq c.
\end{equation}
Since the embedding $H^1(\Sigma_n) \subset L_2(\Sigma_n)$ is compact, it follows from \cite[Section 9, Corollary 6]{Si} that the set $u_h$ is relatively compact in $L_q(0,T;L_2(\Sigma_n))$ for $q<+\infty$.

Extract a subsequence of the functions $u_h$, again denoted as $u_h$, such that as $h\to +0$
\begin{align*}
u_h\rightharpoonup u&\quad  *-\text{weakly in}\quad
L_\infty(0,T; L_{2,+}^{\psi(x)}),\\
u_{hxx}, u_{hy}\rightharpoonup u_{xx}, u_y&\quad  \text{weakly in}\quad
L_2(0,T;L_{2,+}^{\psi'(x)}),\\
u_h\rightarrow u &\quad \text{strongly in}\quad
L_{\max(2,8/(8-3p))}(0,T;L_2(\Sigma_n)) \quad \forall n.
\end{align*}
Let $\phi$ is a test function from Definition \ref{D1.1} with $\supp\phi \subset \overline{\Sigma}_n$. Then since
$$
|g_h(u_h) -g_h(u)| \leq c\bigl(|u_h|^p+|u|^p\bigr)|u_h-u|
$$
with the use of \eqref{3.12} we obtain that the limit function $u$ verifies \eqref{1.11}.

Finally, note that $g(u)\phi_x \in L_\infty(0,T; L_{1,+})$ if $p\leq 1$, while in the case $p>1$
\begin{multline*}
\|g(u)\phi\|_{L_1(\Pi_T^+)} \leq c\int_0^T \|u(\psi')^{3/8}\psi^{1/8}\|_{L_{\infty,+}}^p \iint |u\phi_x| (\psi')^{-3p/8}\psi^{-p/8}\,dxdy\,dt \\ \leq 
c_1 \int_0^T \Bigl(\iint (u_{xx}^2+u_y^2 +u^2)\psi'\, dxdy\Bigr)^{3p/8} \Bigl(\iint u^2\psi\, dxdy\Bigr)^{(p+4)/8} \\ \times
\Bigl(\iint \phi_x^2 (\psi')^{-3p/4}\psi^{-1-p/4}\, dxdy\Bigr)^{1/2} dt<+\infty,
\end{multline*}
since $(\psi')^{-3p/4}\psi^{-1-p/4} \leq c(1+x)^{3pn/4}$ by virtue of the additional property of the function $\psi$. Approximating any test function from Definition \ref{D1.1} by the compactly supported ones and passing to the limit we obtain equality \eqref{1.1} in the general case.
\end{proof}

Now we pass to strong solutions and first establish a local result on existence for the regularized equation.

\begin{lemma}\label{L3.2}
Let $g\in C^2(\mathbb R)$, $g(0)=0$, $|g'(u)|, |g''(u)| \leq c\ \forall u\in\mathbb R$, $\psi(x) \equiv e^{2\alpha x}$ for certain $\alpha>0$, $u_0\in \widetilde H_+^{1,\psi(x)}$, $u_0(0,y)\equiv 0$, $f\in L_2(0,T;\widetilde H_+^{1,\psi(x)})$. Then there exists $t_0\in (0,T]$, such that problem \eqref{1.1}--\eqref{1.4} has a unique strong solution $u\in X^{1,\psi(x)}(\Pi_{t_0}^+)$.
\end{lemma}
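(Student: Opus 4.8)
The plan is to run the same contraction-mapping scheme as in Lemma~\ref{L3.1}, but now in the strong-solution space $X^{1,\psi(x)}(\Pi_{t_0}^+)$ and with the linear theory supplied by Lemma~\ref{L2.8} in place of Lemma~\ref{L2.7}. For $t_0\in(0,T]$ to be chosen and $v\in X^{1,\psi(x)}(\Pi_{t_0}^+)$, I would define $u=\Lambda v$ to be the strong solution of the linear problem \eqref{2.9}, \eqref{1.2}--\eqref{1.4} with right-hand side $f-\bigl(g(v)\bigr)_x$. To invoke Lemma~\ref{L2.8} I split this right-hand side as $f_0\equiv f$ and $f_1\equiv-\bigl(g(v)\bigr)_x=-g'(v)v_x$. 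Since $\psi\equiv e^{2\alpha x}$ gives $\psi^2/\psi'\sim\psi$, and since $|g'|\le c$ while $v_x\in L_2(0,t_0;L_{2,+}^{\psi(x)})$, the term $f_1$ lies in $L_2(0,t_0;L_{2,+}^{\psi^2(x)/\psi'(x)})$; together with $f\in L_2(0,T;\widetilde H_+^{1,\psi(x)})$ and the hypotheses $u_0\in\widetilde H_+^{1,\psi(x)}$, $u_0(0,y)\equiv0$, this shows $\Lambda$ is well defined.

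For the self-mapping property I would apply estimate \eqref{2.34}, which gives
$$\|\Lambda v\|_{X^{1,\psi(x)}(\Pi_{t_0}^+)}\le c(T)\Bigl(\|u_0\|_{\widetilde H_+^{1,\psi(x)}}+\|f\|_{L_2(0,T;\widetilde H_+^{1,\psi(x)})}+\|g'(v)v_x\|_{L_2(0,t_0;L_{2,+}^{\psi(x)})}\Bigr).$$
The last term is bounded by $ct_0^{1/2}\|v\|_{X^{1,\psi(x)}(\Pi_{t_0}^+)}$, because $\|v_x\|_{L_2(0,t_0;L_{2,+}^{\psi(x)})}\le t_0^{1/2}\|v_x\|_{L_\infty(0,t_0;L_{2,+}^{\psi(x)})}$. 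Setting $C_0\equiv c(T)\bigl(\|u_0\|_{\widetilde H_+^{1,\psi(x)}}+\|f\|_{L_2(0,T;\widetilde H_+^{1,\psi(x)})}\bigr)$ and working on the ball of radius $R\equiv2C_0$ in $X^{1,\psi(x)}(\Pi_{t_0}^+)$, the factor $t_0^{1/2}$ makes $\Lambda$ map this ball into itself once $t_0$ is small.

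The contraction estimate is where the main work lies. For $v,\widetilde v$ in the ball, $\Lambda v-\Lambda\widetilde v$ solves the linear problem with zero initial data and right-hand side $-\bigl(g(v)-g(\widetilde v)\bigr)_x$, so by \eqref{2.34} it suffices to estimate $\|g'(v)v_x-g'(\widetilde v)\widetilde v_x\|_{L_2(0,t_0;L_{2,+}^{\psi(x)})}$, and I would decompose
$$g'(v)v_x-g'(\widetilde v)\widetilde v_x=g'(v)(v-\widetilde v)_x+\bigl(g'(v)-g'(\widetilde v)\bigr)\widetilde v_x.$$
The first term is controlled by $ct_0^{1/2}\|v-\widetilde v\|_{X^{1,\psi(x)}(\Pi_{t_0}^+)}$ exactly as above. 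For the second term the bound $|g''|\le c$ gives $|g'(v)-g'(\widetilde v)|\le c|v-\widetilde v|$, and the anisotropic embedding \eqref{2.3} (together with $\psi^{1/2}\ge1$) provides $\|v-\widetilde v\|_{L_\infty(\Sigma_+)}\le c\|v-\widetilde v\|_{\widetilde H_+^{1,\psi(x)}}$; hence this term is bounded by $ct_0^{1/2}\|v-\widetilde v\|_{X^{1,\psi(x)}(\Pi_{t_0}^+)}\,\|\widetilde v\|_{X^{1,\psi(x)}(\Pi_{t_0}^+)}\le cRt_0^{1/2}\|v-\widetilde v\|_{X^{1,\psi(x)}(\Pi_{t_0}^+)}$. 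Choosing $t_0$ so small that $c(1+R)t_0^{1/2}<1$ makes $\Lambda$ a contraction, and the Banach fixed-point theorem yields the unique strong solution on $\Pi_{t_0}^+$. The main obstacle is precisely this bilinear bound for the Lipschitz term $\bigl(g'(v)-g'(\widetilde v)\bigr)\widetilde v_x$, which is why the $C^2$ assumption on $g$ and the $L_\infty$-embedding with the exponential weight are essential.
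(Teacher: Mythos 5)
Your proposal is correct and follows essentially the same route as the paper: a contraction argument in $X^{1,\psi(x)}(\Pi_{t_0}^+)$ based on Lemma~\ref{L2.8} with $f_0\equiv f$, $f_1\equiv g'(v)v_x$, the observation $\psi^2/\psi'\sim\psi$ for the exponential weight, the $t_0^{1/2}$ gain from $v_x\in L_\infty(0,t_0;L_{2,+}^{\psi(x)})$, and the Lipschitz bound $|g'(v)v_x-g'(\widetilde v)\widetilde v_x|\leq c\bigl(|v_x|+|\widetilde v_x|\bigr)|v-\widetilde v|+c|v_x-\widetilde v_x|$ handled via the weighted $L_\infty$ embedding of $\widetilde H_+^{1,\psi(x)}$. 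Your splitting of the difference is just the asymmetric form of the paper's pointwise estimate, so there is no substantive divergence.
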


\begin{proof}
Similarly to the proof of Lemma \ref{L3.1} we construct the desired solution as a fixed point of the map $\Lambda$ but defined on the space $X^{1,\psi(x)}(\Pi_{t_0}^+)$. Here $\psi^2/\psi' \sim \psi$ and Lemma \ref{L2.8}, where $f_0\equiv f$, $f_1\equiv g'(v)v_x$, ensures that such a map exists. Moreover, for functions 
$v,\widetilde{v}\in X^{1,\psi(x)}(\Pi_{t_0}^+)$ according to inequality \eqref{2.35} 
$$
\|\Lambda v\|_{X^{1,\psi(x)}(\Pi_{t_0}^+)} \leq c(T)
\Bigl(\|u_0\|_{\widetilde H_+^{1,\psi(x)}} +\|f\|_{L_2(0,T;\widetilde H_+^{1,\psi(x)})} +
t_0^{1/2}\|v\|_{X^{1,\psi(x)}(\Pi_{t_0}^+)}\Bigr),
$$
and, since $|g'(v)v_x - g'(\widetilde v)\widetilde v_x| \leq c \bigl(|v_x| +|\widetilde v_x|\bigr)|v-\widetilde v| +c|v_x - \widetilde v_x|$,
$$
\|\Lambda v-\Lambda\widetilde{v}\|_{X^{1,\psi(x)}(\Pi_{t_0}^+)} \leq
c(T) t_0^{1/2}\bigl(\|v\|_{X^{1,\psi(x)}(\Pi_{t_0}^+)} + \|\widetilde v\|_{X^{1,\psi(x)}(\Pi_{t_0}^+)}\bigr) \|v-\widetilde{v}\|_{X^{1,\psi(x)}(\Pi_{t_0}^+)},
$$
whence the assertion of the lemma succeeds.
\end{proof}

\begin{proof}[Proof of Existence Part of Theorem~\ref{T1.2}]
First establish the following a priori estimate: if $u\in X_w^{1,e^{2\alpha x}}(\Pi_{T'}^+)$, $\alpha>0$,  is a solution to problem \eqref{1.1}--\eqref{1.4} for some $T'\in (0,T]$, where the function $g\in C^2(\mathbb R)$ verifies \eqref{1.12}, then for any admissible function $\psi(x)$, such that $\psi'$ is also admissible and $\psi(x)\leq c e^{2\alpha x}$ $\forall x\geq 0$,
\begin{equation}\label{3.15}
\|u\|_{X_w^{1,\psi(x)}(\Pi_{T'}^+)} \\ \leq c\bigl(T,\|u_0\|_{\widetilde H_+^{1,\psi(x)}}, \|f\|_{L_2(0,T;\widetilde H_+^{1,\psi(x)})}\bigr).
\end{equation}

Apply equality \eqref{3.6} (here, of course, $\mu_2 = u_{xx}|_{x=0}$), then similarly to \eqref{3.7}, \eqref{3.10} 
\begin{equation}\label{3.16}
\|u\|_{X_w^{\psi(x)}(\Pi_{T'}^+)}  +\|u_{xx}\big|_{x=0}\|_{L_2(B_{T'})} \leq c.
\end{equation}

Next, since the hypotheses of Lemma~\ref{L2.8} and, consequently, Lemma~\ref{L2.9} are satisfied, write the corresponding analogues of equalities \eqref{2.35}, \eqref{2.39},   and subtract from the first one the doubled second one, then with the use of \eqref{2.36}, \eqref{2.37} we derive that
\begin{multline}\label{3.17}
\frac{d}{dt}\iint(u_{xx}^2+u_x^2+u_y^2 -2g^*(u))\rho \,dxdy  
+\iint(4u_{xxxx}^2+5u^2_{xxy}+u^2_{yy} +8u_{xxx}^2 +4u_{xy}^2)\rho' \,dxdy \\
 \leq c\iint (u_{xx}^2+u_x^2 +u_y^2)\rho\, dxdy 
 +c\iint(f_{xx}^2+f_x^2 +f_{y}^2 +f^2) \rho \,dxdy 
 -2\iint g(u)f\rho\, dxdy\\
+\iint g(u)\bigl[(6u_{xxxx} -4u_{xx} -2u_{yy})\rho' +  (8u_{xxx}\rho'' +2u_{xx}\rho''' -2u_x \rho'')\bigr]\,dxdy \\
-2b\iint g^*(u)\rho'\, dxdy - \iint \bigl(g'(u)g(u)\bigr)^*\rho' \,dxdy
+c\int_0^L u_{xx}^2\big|_{x=0} \,dy.
\end{multline}
Choose $\rho(x)\equiv 1$. Note that condition \eqref{1.12} together with \eqref{3.16}  imply that 
\begin{equation}\label{3.18}
\iint |g^*(u)|\, dxdy \leq c\|u\|^p_{L_{\infty,+}} \|u\|^2_{L_{2,+}} \leq c_1\Bigl(\iint (u_{xx}^2 +u_y^2+u^2)\,dxdy\Bigr)^{3p/8},
\end{equation}
\begin{equation}\label{3.19}
\iint g(u)f\, dxdy \leq c\|u\|^p_{L_{\infty,+}}  \|u\|_{L_{2,+}}  \|f\|_{L_{2,+}}.
\end{equation}
Therefore, equality \eqref{3.17} in this case ensures that
\begin{equation}\label{3.20}
\|u_{xx}\|_{L_\infty(0,T';L_{2,+})} + \|u_{y}\|_{L_\infty(0,T';L_{2,+})} \leq c.
\end{equation}
In particular,
\begin{equation}\label{3.21}
\|u\|_{L_\infty(\Pi_{T'}^+)} \leq c.
\end{equation}
Return to equality \eqref{3.17} and choose $\rho(x) \equiv \psi(x)$. By virtue of \eqref{3.21} $|g(u)| \leq c|u|$ and then estimate \eqref{3.15} easily follows.

Note also that from \eqref{3.17}, where $\rho(x)\equiv \rho_0(x-x_0)$ for any $x_0\geq 0$, easily follows that (see \eqref{1.10})
\begin{equation}\label{3.22}
\lambda^+ (u_{xxxx};T')  +\lambda^+(u_{xxy};T') + \lambda^+ (u_{yy};T') \leq c.
\end{equation}

To finish the proof consider set of initial-boundary value problems \eqref{3.2}, \eqref{3.3}, \eqref{1.3}, \eqref{1.4}. Lemma~\ref{L3.2} imply that for any $h\in (0,1]$ there exists a solution to such a problem $u_h \in X^{1,\psi(x)}(\Pi_{t_0(h)}^+)$. Then with the use of estimate \eqref{3.15} we first extend this solution to the whole time segment $[0,T]$ and then similarly to the end of the proof of the previous theorem pass to the limit as $h\to +\infty$ and construct the desired solution. Note that here due to \eqref{3.21} $g(u)\phi_x \in L_1(\Pi_T^+)$ for all values of $p$ without any additional assumptions on the weight function $\psi$.
\end{proof}

\section{Uniqueness and continuous dependence}\label{S4}

\begin{theorem}\label{T4.1}
Let $p\in [0,2]$ in \eqref{1.12}, $\psi(x)$ be an admissible weight function, such that $\psi'(x)$ is also an admissible weight function and inequality \eqref{1.13} be verified. Then for any $T>0$ and $M>0$ there exists a constant $c=c(T,M)$, such that for any two weak solutions $u(t,x,y)$ and $\widetilde u(t,x,y)$ to problem \eqref{1.1}--\eqref{1.4}, satisfying $\|u\|_{X_w^{\psi(x)}(\Pi_T^+)}, \|\widetilde u\|_{X_w^{\psi(x)}(\Pi_T^+)} \leq M$, with corresponding data $u_0, \widetilde u_0\in L_{2,+}^{\psi(x)}$, $f, \widetilde f\in L_1(0,T;L_{2,+}^{\psi(x)})$ the following inequality holds:
\begin{equation}\label{4.1}
\|u -\widetilde u\|_{X_w^{\psi(x)}(\Pi_T^+)} \leq c\bigl( \|u_0 - \widetilde u_0\|_{L_{2,+}^{\psi(x)}} +
\|f-\widetilde f\|_{L_1(0,T;L_{2,+}^{\psi(x)})}\bigr).
\end{equation} 
\end{theorem}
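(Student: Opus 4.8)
The plan is to set $w \equiv u-\widetilde u$ and derive a weighted $L_2$ energy estimate for $w$ closed by Gronwall's inequality. Subtracting the two integral identities \eqref{1.11} for $u$ and $\widetilde u$, one sees that $w\in X_w^{\psi(x)}(\Pi_T^+)$ (with norm $\leq 2M$) is a weak solution of the linear problem \eqref{2.9}, \eqref{1.2}--\eqref{1.4} whose right-hand side is $f_0+f_{1x}$, where $f_0\equiv f-\widetilde f$ and $f_1\equiv-\bigl(g(u)-g(\widetilde u)\bigr)$. Writing $g(u)-g(\widetilde u)=\bigl(\int_0^1 g'(\widetilde u+\theta w)\,d\theta\bigr)w$ and using \eqref{1.12} gives the pointwise bound $|f_1|\leq c\bigl(|u|^p+|\widetilde u|^p\bigr)|w|$. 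First I would check that $f_0\in L_1(0,T;L_{2,+}^{\psi(x)})$ and that $f_1$ lies in the space required by Lemma~\ref{L2.7}, i.e.\ $\int_0^T\|f_1\psi^{3/4}(\psi')^{-1/4}\|_{L_{2,+}}^{4/3}\,dt<+\infty$; this reduces to the nonlinear estimate carried out below. Granting it, Lemma~\ref{L2.7} together with the uniqueness of linear weak solutions (Lemma~\ref{L2.5}) shows that $w$ belongs to $X^{\psi(x)}(\Pi_T^+)$ and satisfies the energy identity \eqref{2.30}.

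The core of the argument is the estimate of the nonlinear forcing term $\iint f_1(w\psi)_x\,dxdy$ on the right of \eqref{2.30}. Exactly as in \eqref{2.31}, this term is dominated by $c\bigl\|(|w_x|+|w|)(\psi')^{1/4}\psi^{1/4}\bigr\|_{L_{2,+}}\,\bigl\|f_1\psi^{3/4}(\psi')^{-1/4}\bigr\|_{L_{2,+}}$, and the first factor is controlled via Lemma~\ref{L2.1} by $\bigl\|(|w_{xx}|+|w_y|)(\psi')^{1/2}\bigr\|_{L_{2,+}}^{1/2}\|w\psi^{1/2}\|_{L_{2,+}}^{1/2}+\|w\psi^{1/2}\|_{L_{2,+}}$. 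For the second factor I would use the pointwise bound on $f_1$ and factor $\psi^{3/2}(\psi')^{-1/2}=\bigl[\psi^{1/4}(\psi')^{-1/4}\bigr]^2\psi$, obtaining $\bigl\|f_1\psi^{3/4}(\psi')^{-1/4}\bigr\|_{L_{2,+}}\leq cB\,\|w\psi^{1/2}\|_{L_{2,+}}$, where $B\equiv\bigl\||u|^p\psi^{1/4}(\psi')^{-1/4}\bigr\|_{L_{\infty,+}}+\bigl\||\widetilde u|^p\psi^{1/4}(\psi')^{-1/4}\bigr\|_{L_{\infty,+}}$.

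The hard part is bounding $B$ in a way that is integrable in $t$, and this is precisely where hypothesis \eqref{1.13} is used. For $p>0$ I would write $\||u|^p\psi^{1/4}(\psi')^{-1/4}\|_{L_{\infty,+}}=\|u\psi^{1/(4p)}(\psi')^{-1/(4p)}\|_{L_{\infty,+}}^p$ and apply \eqref{2.2} with $m=0$, $q=+\infty$ (so $s=3/8$), choosing $\psi_1\equiv\psi'$ and $\psi_2\equiv\psi^{2/p}(\psi')^{-3-2/p}$, which is admissible and satisfies $\psi_1^{3/8}\psi_2^{1/8}=\psi^{1/(4p)}(\psi')^{-1/(4p)}$. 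Condition \eqref{1.13} is exactly the statement $\psi_2\leq c\psi$, so the low-order factor in \eqref{2.2} is bounded by $M$, giving $\||u|^p\psi^{1/4}(\psi')^{-1/4}\|_{L_{\infty,+}}\leq cM^{p/4}\bigl\|(|u_{xx}|+|u_y|+|u|)(\psi')^{1/2}\bigr\|_{L_{2,+}}^{3p/4}$, and likewise for $\widetilde u$. (For $p=0$, \eqref{1.13} reads $\psi\leq c\psi'$, whence $\psi^{3/2}(\psi')^{-1/2}\leq c\psi$ and the bound on $f_1$ is immediate.) Since $\int_0^T\bigl\|(|u_{xx}|+|u_y|+|u|)(\psi')^{1/2}\bigr\|_{L_{2,+}}^2\,dt\leq cM^2$ and $p\leq 2$, Hölder's inequality in $t$ yields $\int_0^T(B^{4/3}+B)\,dt\leq c(T,M)$; this is exactly where the restriction $p\leq 2$ enters.

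With these bounds in hand I would invoke Young's inequality to split off an $\varepsilon\iint(w_{xx}^2+w_y^2)\psi'\,dxdy$ contribution from the nonlinear term and absorb it into the nonnegative terms $\iint(5w_{xx}^2+3w_x^2+w_y^2)\psi'$ on the left of \eqref{2.30}; the remaining terms $\iint(5w_x^2+w^2)\psi'''$, $\iint w^2\psi^{(5)}$ and $b\iint w^2\psi'$ are all dominated by $c\|w\psi^{1/2}\|_{L_{2,+}}^2$ using \eqref{2.32} and \eqref{1.7}, while $2\iint f_0w\psi\leq 2\|f_0\|_{L_{2,+}^{\psi(x)}}\|w\psi^{1/2}\|_{L_{2,+}}$. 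Setting $y(t)\equiv\iint w^2(t,x,y)\psi\,dxdy$, this produces a differential inequality $y'\leq 2\|f_0\|_{L_{2,+}^{\psi(x)}}y^{1/2}+\gamma(t)y$ with $\gamma\in L_1(0,T)$, from which Gronwall's lemma applied to $y^{1/2}$ bounds $\|w\|_{C([0,T];L_{2,+}^{\psi(x)})}$ by the data; integrating the energy identity once more recovers the $L_2(0,T;\widetilde H_+^{1,\psi'(x)})$ part of the norm, yielding \eqref{4.1}.
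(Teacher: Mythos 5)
Your proof is correct and follows essentially the same route as the paper's: a weighted energy estimate from \eqref{2.30} for $w=u-\widetilde u$, the interpolation Lemma~\ref{L2.1} combined with hypothesis \eqref{1.13} to control $\bigl\||u|^p(\psi/\psi')^{1/4}\bigr\|_{L_{\infty,+}}$ by the $X_w^{\psi(x)}$-norms, and a Gronwall closure in which $p\leq 2$ guarantees time-integrability of the coefficient. The only (cosmetic) difference is that you feed the nonlinearity into Lemma~\ref{L2.7} through the divergence-form slot $f_1=-\bigl(g(u)-g(\widetilde u)\bigr)$, whereas the paper verifies $g'(u)u_x-g'(\widetilde u)\widetilde u_x\in L_1(0,T;L_{2,+}^{\psi(x)})$ via its estimate \eqref{4.2}, places it in the $f_0$ slot, and then integrates by parts in \eqref{4.4}, arriving at exactly the same estimate of $\iint\bigl(g(u)-g(\widetilde u)\bigr)(w\psi)_x\,dxdy$ that you perform.
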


\begin{proof}
Let $w\equiv u -\widetilde u$, $w_0\equiv u_0 - \widetilde u_0$, $F\equiv f- \widetilde f$.
Apply for the function $w$ Lemma~\ref{L2.7}, where $f_1\equiv 0$. Note that inequality \eqref{1.13} implies that $(\psi/\psi')^{1/4} \leq c(\psi')^{3p/8}\psi^{p/8}$ and, therefore,
\begin{multline}\label{4.2}
\Bigl(\iint |u|^{2p} u_x^2 \psi\, dxdy\Bigr)^{1/2}   \leq
\bigl\| |u|^p (\psi/\psi')^{1/4}\bigr\|_{L_{\infty,+}} \Bigl(\iint u_x^{2} (\psi'\psi)^{1/2}\,dxdy\Bigr)^{1/2} \\ \leq 
c \bigl\| u (\psi')^{3/8} \psi^{1/8}\bigr\|^p_{L_{\infty,+}} \bigl\|u_x (\psi')^{1/4}\psi^{1/4}\bigr\|_{L_{2,+}} \\ \leq
c_1 \Bigl( \iint (u_{xx}^2 +u_y^2 +u^2)\psi' \,dxdy\Bigr)^{3p/8+ 1/4} \Bigl(\iint u^2 \psi\, dxdy\Bigr)^{p/8+1/4},
\end{multline}
and so $g'(u)u_x \in L_1(0,T;L_{2,+}^{\psi(x)})$ since $p\leq 2$.

As a result, we derive from \eqref{2.30} that for $t\in (0,T]$
\begin{multline}\label{4.3}
\iint w^2\psi\,dxdy +\int_0^t\!\! \iint (4w_{xx}^2 +3w_x^2 +w_y^2)\psi'\,dxdyd\tau \leq \iint w_0^2\psi\,dxdy  \\ 
+c\int_0^t\!\! \iint w^2\psi\,dxdyd\tau + 2\int_0^t \!\!\iint\bigl(F-(g'(u)u_x-g'(\widetilde u) \widetilde u_x)\bigr)w\psi\, dxdyd\tau.
\end{multline}
Here
\begin{multline}\label{4.4}
2\Bigl|\iint \bigl(g'(u)u_x-g'(\widetilde u) \widetilde u_x\bigr) w\psi\,dxdy \Bigr|  = 
2\Bigl| \iint \bigl(g(u) - g(\widetilde u)\bigr) (w\psi)_x \,dxdy \Bigr|\\ \leq 
c\iint (|u|^p+|\widetilde u|^p)|w (w\psi)_x|\,dxdy,
\end{multline}
where similarly to \eqref{4.2}
\begin{multline}\label{4.5}
\iint |u|^p |ww_x| \psi\,dxdy  \leq \bigl\| |u|^p (\psi/\psi')^{1/4}\bigr\|_{L_{\infty,+}} 
\Bigl(\iint w_x^2(\psi')^{1/2}\psi^{1/2} \,dxdy \Bigr)^{1/2}
\Bigl(\iint w^2 \psi \,dxdy \Bigr)^{1/2} \\ \leq
c\Bigl(\iint (u_{xx}^2 +u_y^2 +u^2)\psi'\, dxdy\Bigr)^{3p/8} \Bigl(\iint u^2\psi \,dxdy\Bigr)^{p/8} \\ \times
\Bigl(\iint (w_{xx}^2 +w_y^2 +w^2)\psi'\, dxdy\Bigr)^{1/4} \Bigl(\iint w^2\psi \,dxdy\Bigr)^{3/4} \\ \leq
\varepsilon \iint (w_{xx}^2 +w_y^2 +w^2)\psi'\, dxdy   +c(\varepsilon) \Bigl( \iint (u_{xx}^2 + u_y^2 +u^2)\psi' \,dxdy\Bigr)^{p/2}
\iint w^2\psi \,dxdy,
\end{multline}
where $\varepsilon>0$ can be chosen arbitrarily small. Then inequalities \eqref{4.3}, \eqref{4.5} provide the desired result.
\end{proof}

\begin{remark}\label{R4.1}
Theorems~\ref{T1.1} and \ref{T4.1} show that under the hypothesis of Theorem~\ref{T1.1} problem \eqref{1.1}--\eqref{1.4} is globally well-posed in the space $X_w^{\psi(x)}(\Pi_T^+)$.
\end{remark}

The next theorem, in particular, provides the uniqueness part of Theorem~\ref{T1.2}.

\begin{theorem}\label{T4.2}
Let the function $g\in C^2(\mathbb R)$ verifies condition \eqref{1.14}. Let $\psi(x)$ be an admissible weight function, such that $\psi'(x)$ is also an admissible weight function and condition \eqref{1.15} holds. Then for any $T>0$ and $M>0$ there exists a constant $c=c(T,M)$, such that for any two strong solutions $u(t,x,y)$ and $\widetilde u(t,x,y)$ to problem \eqref{1.1}--\eqref{1.4}, satisfying $\|u\|_{X_w^{1,\psi(x)}(\Pi_T^+)}, \|\widetilde u\|_{X_w^{1,\psi(x)}(\Pi_T^+)} \leq M$, with the corresponding data $u_0, \widetilde u_0\in L_{2,+}^{\psi(x)}$, $f, \widetilde f\in L_1(0,T;L_{2,+}^{\psi(x)})$ inequality \eqref{4.1} holds.
\end{theorem}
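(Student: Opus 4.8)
The plan is to follow the skeleton of the proof of Theorem~\ref{T4.1}, setting $w\equiv u-\widetilde u$, $w_0\equiv u_0-\widetilde u_0$, $F\equiv f-\widetilde f$ and applying Lemma~\ref{L2.7} to $w$ with $f_1\equiv 0$: the function $w$ solves the linear equation with right-hand side $F-\bigl(g'(u)u_x-g'(\widetilde u)\widetilde u_x\bigr)$ and data $w_0$. From the weighted energy relation \eqref{2.30} one obtains, for $t\in(0,T]$, an inequality of the same shape as \eqref{4.3}, in which $\iint w^2\psi\,dxdy+\int_0^t\!\iint(4w_{xx}^2+3w_x^2+w_y^2)\psi'\,dxdyd\tau$ is controlled by $\iint w_0^2\psi\,dxdy$, the data contribution of $F$, a term $c\int_0^t\!\iint w^2\psi$, and the nonlinear difference $N\equiv-2\int_0^t\!\iint\bigl(g'(u)u_x-g'(\widetilde u)\widetilde u_x\bigr)w\psi\,dxdyd\tau$. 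Everything then reduces to bounding $N$ by $\varepsilon\iint(w_{xx}^2+w_y^2+w^2)\psi'+c(\varepsilon)\Phi(t)\iint w^2\psi$ with $\Phi\in L_1(0,T)$, whereupon Gronwall's inequality yields \eqref{4.1}.

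The new ingredient, absent in Theorem~\ref{T4.1}, is that strong solutions decay at the weight rate. Since $u,\widetilde u\in X_w^{1,\psi(x)}(\Pi_T^+)$ with norms $\le M$ and satisfy $v|_{x=0}=0$, the case $m=0$, $q=+\infty$ of Lemma~\ref{L2.1} with $\psi_1=\psi_2=\psi$ gives $\|v\psi^{1/2}\|_{L_{\infty,+}}\le c(M)$ for $v\in\{u,\widetilde u\}$, i.e. the pointwise bound $|v(t,x,y)|\le c(M)\psi^{-1/2}(x)$. Rather than integrating $N$ by parts as in \eqref{4.4}, I would exploit the $C^2$ structure: write $g'(u)u_x-g'(\widetilde u)\widetilde u_x=g'(u)w_x+\bigl(g'(u)-g'(\widetilde u)\bigr)\widetilde u_x$, integrate by parts in $\iint g'(u)ww_x\psi=\tfrac12\iint g'(u)(w^2)_x\psi$ to expose $g''(u)u_x$, and bound $|g'(u)-g'(\widetilde u)|\le c(|u|^q+|\widetilde u|^q)|w|$ by the mean value theorem and \eqref{1.14}. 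This reduces $N$, up to the harmless term $\iint w^2|g'(u)|\psi'\le c\iint w^2\psi$, to expressions of the type $\iint w^2\,|g''(v)|\,|v_x|\,\psi$; using $|g''(v)|\le c|v|^q\le c(M)^q\psi^{-q/2}$ these become $c\iint w^2\,|v_x|\,\psi^{1-q/2}$. The decay of the nonlinear coefficient is exactly what injects the negative power $\psi^{-q/2}$, and hence the $q$-dependence of the weight condition \eqref{1.15}.

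The heart of the matter is to estimate $\iint w^2|v_x|\psi^{1-q/2}\,dxdy$. I would place one factor of $w$ into a Lebesgue space via Lemma~\ref{L2.1} with $m=0$ and $\psi_1=\psi'$, so that the dissipation norm $\|(|w_{xx}|+|w_y|+|w|)(\psi')^{1/2}\|_{L_{2,+}}$ appears with a fractional exponent that Young's inequality turns into $\varepsilon\iint(w_{xx}^2+w_y^2+w^2)\psi'$; keep the second factor of $w$ with weight $\psi^{1/2}$ to reproduce $\iint w^2\psi$; and control $v_x$ through the strong-solution regularity $v\in L_2(0,T;\widetilde H_+^{2,\psi'})$, since by Lemma~\ref{L2.1} with $m=1$ the weighted norm $\|v_x(\psi')^{1/2}\|_{L_{6,+}}$ is dominated by $\|(|v_{xx}|+|v_y|+|v|)(\psi')^{1/2}\|_{L_{2,+}}$, which lies in $L_2(0,T)$ and furnishes the time-integrable coefficient $\Phi(t)$. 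The scheme closes by matching the powers of $\psi$ and $\psi'$ carried by the three groups of factors against the total available weight $\psi^{1-q/2}$; condition \eqref{1.15} is precisely the requirement that the residual weight landing on $w$ be dominated by $\psi$ (equivalently, that the residual weight on $v_x$ be dominated by $\psi'$).

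The main obstacle I anticipate is this last, essentially bookkeeping, step: distributing the weights $\psi$, $\psi'$ and the gain $\psi^{-q/2}$ among the interpolation factors sharply enough that the estimate closes under the stated condition \eqref{1.15} (for power weights, $\alpha\ge 1/(8p)$ as in Remark~\ref{R1.2}) rather than under some stronger hypothesis; a suboptimal choice of the Lebesgue exponents and auxiliary weights in Lemma~\ref{L2.1} leads to a correct but weaker result. Once $N$ is dominated in the required form, the proof is completed by absorbing the $\varepsilon$-terms into the left-hand side and applying Gronwall's inequality, exactly as at the end of the proof of Theorem~\ref{T4.1}.
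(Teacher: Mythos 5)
Your overall architecture coincides with the paper's: the paper also writes $w=u-\widetilde u$, applies Lemma~\ref{L2.7} with $f_1\equiv 0$ to get the analogue of \eqref{4.3} (noting $g'(u)u_x\in L_\infty(0,T;L_{2,+}^{\psi(x)})$ for strong solutions), splits the nonlinear difference as in \eqref{4.6}, integrates by parts to expose $g''$, uses the mean value theorem with \eqref{1.14}, and extracts the decay $\|u\psi^{1/2}\|_{L_{\infty,+}}^q$ exactly as you do. The gap is in the weight bookkeeping of your final step, and it is not merely a risk of suboptimality as you hedge: your committed distribution does \emph{not} close under \eqref{1.15}. You put $v_x$ into $L_{6,+}$ with the dissipation weight $(\psi')^{1/2}$ and one factor of $w$ into $L_{2,+}$ with weight $\psi^{1/2}$; then in the term $\iint w^2|v_x|\psi^{1-q/2}\,dxdy$ the residual weight on the second factor of $w$ is $\psi^{(1-q)/2}(\psi')^{-1/2}$, which by H\"older lands in $L_{3,+}$, where Lemma~\ref{L2.1} ($m=0$, $q=3$, $s=1/8$, $\psi_1=\psi'$, $\psi_2=\psi$) supplies only the weight $(\psi')^{1/8}\psi^{3/8}$. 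Domination then requires $(\psi')^{5}\psi^{4q-1}\geq c_0$, which for the power weight $\psi=(1+x)^{2\alpha}$ means $\alpha\geq 5/(8(q+1))=5/(8p)$ --- strictly stronger than the $\alpha\geq 1/(8p)$ encoded in \eqref{1.15}. The deficit is structural: the dissipation norm carries only $(\psi')^{1/2}$ on $v_x$, and no redistribution of Lebesgue exponents within your scheme recovers \eqref{1.15} (optimizing still leaves a condition of the order $\psi'\psi^{q}\geq c_0$). Your scheme is fine for exponential weights, where $\psi'\sim\psi$, but fails precisely in the power-weight regime that the sharp condition \eqref{1.15} is designed for.

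The paper's fix, which you should adopt, is to avoid the dissipation norm for $v_x$ altogether: since $u\in L_\infty(0,T;\widetilde H_+^{1,\psi(x)})$ with norm $\leq M$, Lemma~\ref{L2.1} with $m=1$, $q=6$, $\psi_1=\psi_2=\psi$ gives $\|u_x\psi^{1/2}\|_{L_{6,+}}\leq cM$ \emph{uniformly in time} (no time-integrable coefficient $\Phi$ is needed; the Gronwall coefficient is just $c(M)$). With the full weight $\psi^{q/2}$ on $|u|^q$ (from $\|u\psi^{1/2}\|_{L_{\infty,+}}^q$) and $\psi^{1/2}$ on $u_x$, only the sliver $(\psi')^{1/8}\psi^{7/8}$ must be extracted from $w^2$, via Lemma~\ref{L2.1} with $m=0$, $q=12/5$, $s=1/16$, $\psi_1=\psi'$, $\psi_2=\psi$, yielding
\begin{equation*}
\iint |u|^q|u_x|w^2\psi\,dxdy \leq c\,\|u\|^{q+1}_{\widetilde H_+^{1,\psi(x)}}\Bigl(\iint (w_{xx}^2+w_y^2+w^2)\psi'\,dxdy\Bigr)^{1/8}\Bigl(\iint w^2\psi\,dxdy\Bigr)^{7/8},
\end{equation*}
and the required weight inequality $\psi\leq c\,\psi^{(q+1)/2}(\psi')^{1/8}\psi^{7/8}$ is then \emph{exactly} condition \eqref{1.15}. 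After Young's inequality with exponents $8$ and $8/7$, absorption and Gronwall complete the proof as you describe.
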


\begin{proof}
The proof mostly repeats the proof of Theorem~\ref{T4.1}. Note that here obviously $g'(u)u_x, g'(\widetilde u)\widetilde u_x \in L_\infty(0,T:L_{2,+}^{\psi(x)})$, therefore, equality \eqref{4.3} holds. The difference is related only to the nonlinear term. In comparison with \eqref{4.4} we estimate it in the following way: since 
\begin{equation}\label{4.6}
g'(u)u_x-g'(\widetilde u) \widetilde u_x = \bigl(g'(u) - g'(\widetilde u)\bigr)u_x + g'(\widetilde u) w_x,
\end{equation}
\begin{multline}\label{4.7}
2\Bigl|\iint \bigl(g'(u)u_x-g'(\widetilde u) \widetilde u_x\bigr) w\psi\,dxdy \Bigr|  = 
\Bigl| 2\iint \bigl(g'(u) - g'(\widetilde u)\bigr)u_x w\psi \,dxdy - \iint g''(\widetilde u) u_x w^2 \psi\,dxdy  \\ -\iint g'(\widetilde u) w^2\psi' \,dxdy \Bigr|  \leq 
c\iint \bigl(|u|^q+|\widetilde u|^q\bigr)\bigl(|u_x| +|\widetilde u_x|\bigr)) w^2\psi\,dxdy +c \iint w^2 \psi \,dxdy.
\end{multline}
Since by virtue of \eqref{1.15} $\psi \leq c \psi^{(1+q)/2} (\psi')^{1/8} \psi^{7/8}$
\begin{multline}\label{4.8}
\iint |u|^q |u_x| w^2 \psi\,dxdy  \leq c\iint |u|^q \psi^{q/2} \cdot |u_x| \psi^{1/2} \cdot w^2 (\psi')^{1/8} \psi^{7/8} \,dxdy \\ \leq 
c\|u \psi^{1/2}\|^q_{L_{\infty,+}} \|u_x \psi^{1/2}\|_{L_{6,+}} \|w (\psi')^{1/16} \psi^{7/16} \|_{L_{12/5,+}}^2 \\ \leq
c_1 \|u\|^{q+1}_{\widetilde H_+^{1,\psi(x)}} \Bigl( \iint (w_{xx}^2 +w_y^2 +w^2)\psi' \,dxdy\Bigr)^{1/8} \Bigl(\iint w^2\psi \,dxdy\Bigr)^{7/8},
\end{multline}
the desired result succeeds from \eqref{4.3}, \eqref{4.7}.
\end{proof}

\begin{theorem}\label{T4.3}
Let the function $g\in C^2(\mathbb R)$ verifies condition \eqref{1.14}. Let $\psi(x)$ be an admissible weight function, such that $\psi'(x)$ is also an admissible weight function and
for certain positive constant $c_0$
\begin{equation}\label{4.9}
\psi'(x) \psi^{q}(x) \geq c_0\quad \forall x\geq 0.
\end{equation}
Then for any $T>0$ and $M>0$ there exists a constant $c=c(T,M)$, such that for any two strong solutions $u(t,x,y)$ and $\widetilde u(t,x,y)$ to problem \eqref{1.1}--\eqref{1.4}, satisfying $\|u\|_{X_w^{1,\psi(x)}(\Pi_T^+)}, \|\widetilde u\|_{X_w^{1,\psi(x)}(\Pi_T^+)} \leq M$, with corresponding data $u_0, \widetilde u_0\in \widetilde H_+^{1,\psi(x)}$, $f, \widetilde f\in L_2(0,T;\widetilde H_+^{1,\psi(x)})$, $u_0(0,y) = \widetilde u_0(0,y)\equiv 0$, the following inequality holds:
\begin{equation}\label{4.10}
\|u -\widetilde u\|_{X_w^{1,\psi(x)}(\Pi_T^+)} \leq c\bigl( \|u_0 - \widetilde u_0\|_{H_+^{1,\psi(x)}}  +
\|f-\widetilde f\|_{L_2(0,T;H_+^{1,\psi(x))})}\bigr).
\end{equation} 
\end{theorem}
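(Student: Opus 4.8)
The plan is to follow the scheme of Theorems~\ref{T4.1} and \ref{T4.2}, but now to propagate the estimate in the full strong norm. Set $w\equiv u-\widetilde u$, $w_0\equiv u_0-\widetilde u_0$, $F\equiv f-\widetilde f$; then $w$ solves the linear equation \eqref{2.9} with right-hand side $F-N$, where $N\equiv g'(u)u_x-g'(\widetilde u)\widetilde u_x$, with initial value $w_0$ (and $w_0(0,y)\equiv0$) and the homogeneous boundary conditions \eqref{1.3}, \eqref{1.4}. Since $u,\widetilde u\in X_w^{1,\psi(x)}(\Pi_T^+)\subset L_\infty(\Pi_T^+)$ are bounded by a constant depending on $M$, the quantities $g'(u)$, $g'(\widetilde u)$, $g''(\widetilde u)$ are bounded along the solutions. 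First I would record the weak-level identity \eqref{2.30} for $w$ (with $f_1\equiv0$) which, exactly as in \eqref{4.3}--\eqref{4.5}, yields control of $\|w\|_{X_w^{\psi(x)}(\Pi_T^+)}$ together with $\|w_{xx}\big|_{x=0}\|_{L_2(B_T)}$ in terms of the data; this provides the lower-order term $\iint w^2\psi\,dxdy$ needed to close the final Gronwall argument.

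The heart of the matter is the strong identity \eqref{2.35} applied to $w$ with $\rho\equiv\psi$, which produces $\frac{d}{dt}\iint(w_{xx}^2+w_x^2+w_y^2)\psi\,dxdy$ on the left together with the good high-order dissipation $\iint(5w_{xxxx}^2+6w_{xxy}^2+w_{yy}^2+\dots)\psi'\,dxdy$. The contributions of $F$ are absorbed as in Lemma~\ref{L2.8} by $\|F\|_{\widetilde H_+^{1,\psi(x)}}$ and small multiples of the dissipation, while the $\psi'''$ and $\psi^{(5)}$ terms and the boundary terms in $\mu_4=w_{xxxx}\big|_{x=0}$ are handled by \eqref{2.36}, \eqref{2.37}. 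The new work is the nonlinear contribution $-2\iint N[(w_{xx}\psi)_{xx}-(w_x\psi)_x-w_{yy}\psi]\,dxdy$, which I would treat using the splitting \eqref{4.6}, namely $N=g'(\widetilde u)w_x+(g'(u)-g'(\widetilde u))u_x$.

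For the transport part $g'(\widetilde u)w_x$ I would not estimate it crudely against the fourth-order dissipation, since the weight $\psi^2/\psi'$ arising from such a Cauchy--Schwarz step is too heavy for power weights; instead I would integrate by parts in $x$, moving derivatives off the top-order factor so that the leading contribution becomes $\iint\bigl(g''(\widetilde u)\widetilde u_x\psi+g'(\widetilde u)\psi'\bigr)w_{xx}^2\,dxdy$ plus terms of the same or lower order. The genuinely nonlinear part carries the small factor $|g'(u)-g'(\widetilde u)|\leq c(|u|^q+|\widetilde u|^q)|w|$, so it can be paired against the dissipation directly. In both cases the remaining weighted integrals are estimated by Lemma~\ref{L2.1}, applied to $w$ and, for the second-order factors such as $w_{xx}$, to $w_{xx}$ itself, interpolating between the high-order dissipation (weight $\psi'$) and the energy (weight $\psi$); the boundedness of $u,\widetilde u$ controls the factors $|u|^q$, $\widetilde u_x$ in $L_6$, and so on.

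The main obstacle, and the place where hypothesis \eqref{4.9} is indispensable, is the weight bookkeeping in these interpolation estimates: after distributing powers of $\psi$ onto the factors so that the $u$- and $\widetilde u$-dependent norms are bounded by $M$, the residual weight on $w$ must be dominated by the energy weight $\psi$, and the inequality $\psi'\psi^q\geq c_0$ is precisely what forces the surviving power of $\psi/\psi'$ to be absorbable — exactly the role played by \eqref{1.13} and \eqref{1.15} in Theorems~\ref{T4.1} and \ref{T4.2}. Once every nonlinear term is bounded by $\varepsilon\bigl(\text{strong dissipation}\bigr)+c(M)\bigl(\iint(w_{xx}^2+w_x^2+w_y^2)\psi\,dxdy+\iint w^2\psi\,dxdy\bigr)$, choosing $\varepsilon$ small, adding the weak-level estimate, and applying Gronwall's lemma gives \eqref{4.10}.
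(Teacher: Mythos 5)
Your overall architecture (weak-level estimate first, then the strong identity \eqref{2.35} for $w=u-\widetilde u$ with the nonlinearity split as in \eqref{4.6}) matches the paper's, but there is a genuine gap at exactly the point you identify as the heart of the matter: the transport part $g'(\widetilde u)w_x$. You correctly reject the crude Cauchy--Schwarz step (the weight $\psi^2/\psi'$ is indeed too heavy for $g'(\widetilde u)$ merely bounded), but your replacement does not close either. Integrating by parts necessarily differentiates $g'(\widetilde u)$ and produces, as you yourself write, terms like $\iint g''(\widetilde u)\widetilde u_x\,w_{xx}^2\,\psi\,dxdy$ (and also $\iint g''(\widetilde u)\widetilde u_x\, w_x w_{xxx}\,\psi\,dxdy$). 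Here $\widetilde u_x$ is \emph{not} bounded in $L_\infty$ -- membership in $X_w^{1,\psi(x)}$ only gives $\widetilde u_x\psi^{1/2}\in L_6$ via Lemma~\ref{L2.1} -- so $\widetilde u_x$ can carry at most the weight $\psi^{1/2}$, leaving $\psi^{1/4}$ on each factor $w_{xx}$ in $L_{12/5}$. Interpolating $w_{xx}$ by Lemma~\ref{L2.1} ($m=0$, $q=12/5$, $s=1/16$) between the dissipation (weight $\psi'$) and the energy (weight $\psi$) produces the weight $(\psi')^{1/16}\psi^{7/16}$, so you need $\psi^{1/4}\leq c(\psi')^{1/16}\psi^{7/16}$, i.e.\ $\psi'\psi^{3}\geq c_0$. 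That is \emph{not} implied by \eqref{4.9} when $q>3$: for the power weight $\psi(x)=(1+x)^{2\alpha}$ with $\alpha$ near $1/(2(q+1))$ (precisely the regime Remark~\ref{R4.2} targets) one has $\psi'\psi^3\sim(1+x)^{8\alpha-1}\to 0$. The cross term $\iint g''(\widetilde u)\widetilde u_x w_x w_{xxx}\psi$ is even worse: the bookkeeping there forces conditions close to $\psi\leq c\psi'$, i.e.\ exponential weights only. So your claim that \eqref{4.9} is ``precisely what forces the surviving power of $\psi/\psi'$ to be absorbable'' fails on your route.

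The missing idea is the paper's reduction $g_1'(u)\equiv g'(u)-g'(0)$, adjoining $g'(0)u_x$ to the linear term $bu_x$. Then \eqref{1.14} gives $|g_1'(u)|\leq c|u|^{q+1}$ (inequality \eqref{4.11}), and the ``crude'' route you rejected suddenly works: by \eqref{4.9} one has $\psi^2/\psi'\leq c\psi^{q+2}$ (inequality \eqref{4.12}), and in the transport part $|g_1'(\widetilde u)w_x|^2\psi^2/\psi'\leq c|\widetilde u|^{2q+2}w_x^2\psi^{q+2}$ each of the $2q+2$ factors of $\widetilde u$ absorbs exactly one $\psi^{1/2}$, leaving $\|\widetilde u\psi^{1/2}\|^{2q+2}_{L_{\infty,+}}\iint w_x^2\psi\,dxdy$; the genuinely nonlinear part is handled the same way via $\|u\psi^{1/2}\|_{L_{\infty,+}}^{2q}\|u_x\psi^{1/2}\|^2_{L_{6,+}}\|w\psi^{1/2}\|^2_{L_{3,+}}$. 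This treats the whole nonlinearity as the $f_1$-forcing in Lemma~\ref{L2.8} -- note that the subtraction is also what legitimizes applying \eqref{2.35} at all, since it puts $g_1'(u)u_x-g_1'(\widetilde u)\widetilde u_x$ into $L_2(0,T;L_{2,+}^{\psi^2(x)/\psi'(x)})$, which fails for the unsubtracted $g'(\widetilde u)w_x$ -- and no $\varepsilon$-absorption into the dissipation is needed: everything is bounded by $c(M)$ times the $\psi$-weighted energy of $w$, and Gronwall finishes. A related caution: your appeal to ``boundedness of $u,\widetilde u$'' discards the weighted sup norms $\|u\psi^{1/2}\|_{L_{\infty,+}}$ that the bookkeeping actually requires; plain $L_\infty$ bounds are insufficient to absorb $\psi^{q+2}$.
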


\begin{proof}
First of all note that the hypothesis of Theorem~\ref{T4.2} is satisfied and, consequently, inequality \eqref{4.1} holds.

Let $g'_1(u) \equiv g'(u) - g'(0)$, then according to \eqref{1.14} 
\begin{equation}\label{4.11}
|g'_1(u)| \leq c |u|^{q+1}.
\end{equation}
Adjoin the term $g'(0)u_x$ to the linear term $bu_x$ and consider an equation of \eqref{1.1} type, where $g'$ is substituted by $g'_1$. Condition \eqref{4.9} implies that 
\begin{equation}\label{4.12}
\frac{\psi^{2}(x)}{\psi'(x)} \leq c \psi^{q+2}(x).
\end{equation}
In particular, it means that $g'_1(u)u_x, g'_1(\widetilde u)\widetilde u_x \in L_\infty(0,T;L_{2,+}^{\psi^2(x)/\psi'(x)})$. Write corresponding equality \eqref{2.35} for $w\equiv u-\widetilde u$ and $f_1 \equiv g'_1(u)u_x- g'_1(\widetilde u)\widetilde u_x$, then 
\begin{multline}\label{4.13}
\iint (w_{xx}^2 +w_x^2 +w_y^2) \psi \,dxdy  + \int_0^t \!\! \iint (4w_{xxxx}^2 +5w_{xxy}^2 +u_{yy}^2 +7u_{xxx}^2 +3u_{xy}^2)\psi' \,dxdy d\tau \\ \leq
\iint (w_{0xx}^2 +w_{0x}^2 +w_{0y}^2)\psi\, dxdy +
c\int_0^t \!\! \iint (w_{xx}^2 +w_x^2 +w_y^2 +w^2)\psi \, dxdy d\tau \\ +
c\int_0^t \!\! \iint (F_{xx}^2 +F_y^2 +F^2)\psi \,dxdy d\tau +
c\int_0^t \!\! \iint \bigl( g'_1(u)u_x- g'_1(\widetilde u)\widetilde u_x\bigr)^2 \frac{\psi^2}{\psi'} \,dxdy d\tau.
\end{multline} 
To estimate the integral with the nonlinear term apply \eqref{4.11}, \eqref{4.12} and the corresponding analogue of \eqref{4.6}, then
\begin{multline*}
\iint \bigl( g'_1(u)u_x- g'_1(\widetilde u)\widetilde u_x\bigr)^2 \frac{\psi^2}{\psi'} \,dxdy \leq c\iint \bigl(|u|^{2q}+|\widetilde u|^{2q}\bigr) u_x^2 w^2 \psi^{q+2} \,dxdy \\+
c\iint |\widetilde u|^{2q+2} w_x^2 \psi^{q+2} \, dxdy,
\end{multline*} 
where
\begin{multline*}
\iint |u|^{2q} u_x^2 w^2 \psi^{q+2} \,dxdy \leq \|u \psi^{1/2}\|^{2q}_{L_{\infty,+}} \|u_x \psi^{1/2}\|^2_{L_{6,+}} \|w\psi^{1/2}\|_{L_{3,+}}^2 \\ \leq 
c\|u\|^{2q+2}_{\widetilde H_+^{1,\psi(x)}} \iint (w_{xx}^2 +w_y^2 +w^2) \psi\, dxdy,
\end{multline*}
$$
\iint |\widetilde u|^{2q+2} w_x^2 \psi^{q+2} \, dxdy \leq \|u \psi^{1/2}\|^{2q+2}_{L_{\infty,+}} \iint w_x^2\psi \,dxdy.
$$
As a result, the statement of the theorem follows from inequality \eqref{4.13}.
\end{proof}

\begin{remark}\label{R4.2}
Theorems~\ref{T1.2}, \ref{T4.2} and \ref{T4.3} show that under the hypothesis of Theorem~\ref{T1.2} and additional assumption  \eqref{4.9}  problem \eqref{1.1}--\eqref{1.4} is globally well-posed in the space $X_w^{1,\psi(x)}(\Pi_T^+)$. This additional assumption holds for any exponential weight $e^{2\alpha x}$, $\alpha>0$, and for the power weight $(1+x)^{2\alpha}$ if $\alpha\geq 1/(2(q+1))$.
\end{remark}

\section{Large-time decay of solutions}\label{S5}

\begin{proof}[Proof of Theorem~\ref{T1.3}]
Let $\psi(x)\equiv e^{2\alpha x}$ for certain $\alpha \in (0,\alpha_0]$, where $\alpha_0$ will be specified later, $u_0\in L_{2,+}^{\psi(x)}$, $f\equiv 0$. Consider the unique solution to problem \eqref{1.1}--\eqref{1.4} (in the cases a) and c)) $u\in X_w^{\psi(x)}(\Pi_T^+)\ \forall T$. Note that according to \eqref{4.2} $g'(u)u_x\in L_1(0,T;L_{2,+}^{\psi(x)})$.

Apply Lemma~\ref{L2.7}, where $f_0\equiv g'(u)u_x$, $f_1\equiv 0$, then equality \eqref{3.6} for $\rho\equiv 1$ provides that 
\begin{equation}\label{5.1}
\|u(t,\cdot,\cdot)\|_{L_{2,+}}\leq \|u_0\|_{L_{2,+}} \quad \forall t\geq 0.
\end{equation} 
Next, equality \eqref{3.6} for $\rho \equiv \psi$ implies that
\begin{multline}\label{5.2}
\frac{d}{dt} \iint u^2\psi\,dxdy +
2\alpha \iint (5u_{xx}^2 +u_x^2 +u_y^2 )\psi\,dxdy 
+2\alpha(2-20\alpha^2) \iint u_x^2\psi\,dxdy \\
+2\alpha(16\alpha^4 -4\alpha^2 -b) \iint u^2\psi\,dxdy + \int_0^L \mu_2^2 \,dy
= 2\alpha \iint \bigl(g'(u)u\bigr)^*\psi\,dxdy.
\end{multline} 
With the use of inequalities \eqref{3.8} (without $h$) and \eqref{3.9} we derive that uniformly with respect to $L$ for certain constant $c^*$, depending on the properties of the function $g$,
\begin{multline}\label{5.3}
2\iint \bigl(g'(u)u\bigr)^*\psi\,dxdy   \leq
c\Bigl(\iint (u_{xx}^2 +u_y^2+u^2)\psi\, dxdy\Bigr)^{3p/8} \Bigl(\iint u^2\psi\, dxdy\Bigr)^{(8-3p)/8} \|u_0\|^p_{L_{2,+}} \\ \leq
\frac12  \iint (u_{xx}^2 +u_y^2)\psi\, dxdy +c^*\bigl(\|u_0\|^{8p/(8-3p)}_{L_{2,+}} +\|u_0\|^p_{L_{2,+}}\bigr) \iint u^2\psi\, dxdy.
\end{multline}
Inequality \eqref{2.8} yields that 
\begin{equation}\label{5.4}
\frac12 \iint u_y^2\psi\,dxdy \geq \frac{\pi^2}{2\varkappa\L^2} \iint u^2\psi\,dxdy.
\end{equation}
Combining \eqref{5.2}--\eqref{5.4} we find that uniformly with respect to $\alpha\in (0,\sqrt{1/10}]$ and $L$
\begin{multline}\label{5.5}
\frac{d}{dt}\iint u^2\psi\,dxdy + \alpha \iint \bigl(u_{xx}^2 +u_x^2+u_y^2\bigr)\psi\,dxdy +\int_0^L \mu_2^2\,dy
\\ +\alpha\Bigl(\frac{\pi^2}{2\varkappa\L^2}-2b-8\alpha^2- 32\alpha^4 -c^* \bigl(\|u_0\|^{8p/(8-3p)}_{L_{2,+}} +\|u_0\|^p_{L_{2,+}}\bigr)\Bigr) \iint u^2\psi\,dxdy
\leq 0.
\end{multline}
Choose $\displaystyle L_0= \frac{\pi}{\sqrt{20\varkappa b}}$ if $ b>0$, $\alpha_0 \in (0,\sqrt{1/10}]$, satisfying an inequality $\displaystyle 8\alpha_0^2 + 32\alpha_0^4 \leq  \frac{\pi^2}{10\varkappa L^2}$, $\epsilon_0>0$ satisfying an inequality $\displaystyle c^*(\epsilon_0^{8p/(8-3p)}+\epsilon_0^p) \leq \frac{\pi^2}{10\varkappa L}$, $\beta = \displaystyle \frac{\pi^2}{10\varkappa L^2}$. Then it follows from \eqref{5.5} that
\begin{equation}\label{5.6}
\frac{d}{dt}\iint u^2\psi\,dxdy + \int_0^L \mu_2^2\,dy +
\alpha \iint (u_{xx}^2 +u_x^2+u_y^2 +\beta u^2)\psi\,dxdy 
+\alpha\beta\iint u^2\psi\,dxdy
\leq 0,
\end{equation}
whence \eqref{1.16} follows.
\end{proof}

\begin{proof}[Proof of Theorem~\ref{T1.4}]
Let the values $L_0$, $\alpha_0$, $\epsilon_0$, $\beta$ be the same as as in the proof of the previous theorem, $\psi(x)\equiv e^{2\alpha x}$ for certain $\alpha \in (0,\alpha_0]$,  $u_0\in \widetilde H_+^{1,\psi(x)}$, $u_0(0,y)\equiv 0$, $\|u_0\|_{L_2,+} \leq \epsilon_0$, $f\equiv 0$.  Consider the unique solution to problem \eqref{1.1}--\eqref{1.4} (in the cases a) and c)) $u\in X_w^{1,\psi(x)}(\Pi_T^+)\ \forall T$. Since $g'(u)u_x \in L_\infty(0,T;L_{2,+}^{\psi(x)})$, all the proof of Theorem~\ref{T1.3} can be repeated and inequality \eqref{5.6} obtained. In particular, besides \eqref{1.16} it follows from \eqref{5.6} that
\begin{equation}\label{5.7}
\int_0^{+\infty}\ e^{\alpha\beta \tau} \Bigl[\int_0^L u_{xx}^2\big|_{x=0}\,dy +
\alpha \iint (u_{xx}^2 +u_x^2+u_y^2 +\beta u^2)\psi\,dxdy\Bigr]\,d\tau \leq \|u_0\|^2_{L_{2,+}^{\psi(x)}}.
\end{equation}

Next, similarly to \eqref{3.17} we derive from \eqref{2.35} and \eqref{2.39} that in the case $\rho(x)\equiv 1$
\begin{equation}\label{5.8}
\frac{d}{dt}\iint(u_{xx}^2+u_x^2+u_y^2 -2g^*(u)) \,dxdy \leq c\int_0^L u^2_{xx}\big|_{x=0} \, dy, 
\end{equation}
whence with the use of \eqref{3.18} and \eqref{5.7} follows that uniformly with respect to $t\geq 0$
$$
\|u_{xx}\|_{L_{2,+}} + \|u_{y}\|_{L_{2,+}} \leq c
$$
and, in particular,
\begin{equation}\label{5.9}
\|u\|_{L_\infty(\Pi^+_\infty)} \leq c.
\end{equation}
Returning to equalities \eqref{2.35} and \eqref{2.39} in the case $\rho(x)\equiv \psi(x)$ we derive that
\begin{multline}\label{5.10}
\frac{d}{dt}\iint(u_{xx}^2+u_x^2+u_y^2 -2g^*(u))\psi \,dxdy  
+2\alpha\iint(4u_{xxxx}^2+5u^2_{xxy}+u^2_{yy} +8u_{xxx}^2 +4u_{xy}^2)\psi \,dxdy \\ \leq
c\alpha\iint (u_{xx}^2+u_x^2 +u_y^2)\psi\, dxdy 
-4b\alpha\iint g^*(u)\psi\, dxdy -2\alpha \iint \bigl(g'(u)g(u)\bigr)^*\psi \,dxdy
+c\int_0^L u_{xx}^2\big|_{x=0} \,dy \\
+2\alpha\iint g(u)\big[(6u_{xxxx} +(8\alpha^2-4)u_{xx} -2u_{yy} +16\alpha u_{xxx} -4\alpha u_x\bigr]\psi \,dxdy,
\end{multline}
whence with the use of \eqref{5.7} and \eqref{5.9} inequality \eqref{1.17} succeeds.
\end{proof}

\appendix
\section{}\label{A1}

For certain interval $I\subset \mathbb R$ and natural $m$ consider the usual Sobolev space $H^m_0(I)$ and its adjoint one $H^{-m}(I) = \bigl(H^m_0(I)\bigr)^*$. Then
\begin{equation}\label{A.1}
\|f\|_{H^{-m}(I)} = \sup\limits_{\varphi \in H_0^m(I), \varphi\not\equiv 0} \frac{|\langle f,\varphi\rangle|}{\|\varphi\|_{H^m(I)}}.
\end{equation}
The structure of the space $H^{-m}(I)$ is well-known: $f \in H^{-m}(I)$ iff $f(x)= \sum\limits_{j=0}^m f_j^{(j)}(x), \ f_j \in L_2(I)$.  Moreover, the equivalent norm in this space is the following:
\begin{equation}\label{A.2}
\|f\|_{H^{-m}(I)} = \Bigl(\sum\limits_{j=0}^m \|f_j\|^2_{L_2(I)}\Bigr)^{1/2}.
\end{equation}

\begin{lemma}\label{LA.1}
Let $k\geq n+m$ for certain natural $n$ and $m$. If $f\in L_2(\mathbb R)$ and $f^{(k)} \in H^{-m}(\mathbb R)$, then $f^{(n)} \in L_2(\mathbb R)$ and for certain constant $c=c(k,m,n)$
\begin{equation}\label{A.3}
\|f^{(n)}\|_{L_2(\mathbb R)} \leq c\bigl(\|f^{(k)}\|_{H^{-m}(\mathbb R)} + \|f\|_{L_2(\mathbb R)}\bigr).
\end{equation}
\end{lemma}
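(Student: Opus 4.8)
The plan is to pass to the Fourier transform, where both the hypotheses and the conclusion become weighted $L_2$-conditions on $\widehat f$, so that the whole statement reduces to an elementary pointwise inequality for the corresponding Fourier multipliers. First I would recall the Fourier characterization of the negative-order Sobolev norm on the whole line: for $g \in H^{-m}(\mathbb R)$ one has
$$
\|g\|_{H^{-m}(\mathbb R)}^2 \asymp \int_{\mathbb R} (1+\xi^2)^{-m} |\widehat g(\xi)|^2\,d\xi,
$$
which is equivalent to the decomposition norm \eqref{A.2} via Plancherel's identity together with $\widehat{f_j^{(j)}}(\xi) = (i\xi)^j \widehat{f_j}(\xi)$. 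Applying this to $g = f^{(k)}$ and using $\widehat{f^{(k)}}(\xi) = (i\xi)^k \widehat f(\xi)$, the hypothesis $f^{(k)} \in H^{-m}(\mathbb R)$ reads $\int_{\mathbb R} |\xi|^{2k}(1+\xi^2)^{-m} |\widehat f(\xi)|^2\,d\xi < +\infty$, while $f \in L_2(\mathbb R)$ gives $\int_{\mathbb R} |\widehat f|^2 < +\infty$; the desired conclusion $f^{(n)} \in L_2(\mathbb R)$ is exactly $\int_{\mathbb R} |\xi|^{2n} |\widehat f(\xi)|^2\,d\xi < +\infty$.

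Next I would prove the pointwise multiplier inequality
$$
|\xi|^{2n} \leq c\Bigl( \frac{|\xi|^{2k}}{(1+\xi^2)^m} + 1 \Bigr), \qquad \xi \in \mathbb R,
$$
by splitting into two regions. For $|\xi| \leq 1$ the left-hand side is at most $1$, so the bound is trivial. For $|\xi| \geq 1$ one estimates $(1+\xi^2)^m \leq 2^m |\xi|^{2m}$, whence $|\xi|^{2k}(1+\xi^2)^{-m} \geq 2^{-m}|\xi|^{2(k-m)} \geq 2^{-m}|\xi|^{2n}$, where the last step uses $k - m \geq n$ together with $|\xi| \geq 1$; the resulting constant depends only on $k,m,n$. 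Multiplying this inequality by $|\widehat f(\xi)|^2$ and integrating over $\mathbb R$ bounds $\int |\xi|^{2n}|\widehat f|^2$ by a constant times the sum of the two finite integrals recorded above. In particular $f^{(n)}$, defined as the inverse Fourier transform of $(i\xi)^n \widehat f$, belongs to $L_2(\mathbb R)$, and Plancherel's identity turns the resulting estimate directly into \eqref{A.3}.

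The argument is essentially routine, and there is no serious obstacle; the only point requiring care is the equivalence between the distributional definition of $\|\cdot\|_{H^{-m}(\mathbb R)}$ used in \eqref{A.1}--\eqref{A.2} and its Fourier form, so I would either cite this standard fact or record the short Plancherel computation establishing both of the comparison inequalities. Everything else is the elementary two-region estimate on the multiplier followed by integration against $|\widehat f|^2$.
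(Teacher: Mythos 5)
Your proposal is correct and takes essentially the same approach as the paper: the paper's proof likewise passes to the Fourier characterization of $H^{-m}(\mathbb R)$ and reduces \eqref{A.3} via Plancherel to the pointwise inequality $|\xi|^{n} \leq c\bigl(|\xi|^k (1+|\xi|^2)^{-m/2} + 1\bigr)$, which is exactly (the square root of) your multiplier bound. The only difference is one of detail: the paper declares this inequality obvious, while you verify it by splitting at $|\xi|=1$, and you also spell out the equivalence of the Fourier norm with the decomposition norm \eqref{A.2}, which the paper takes as known.
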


\begin{proof}
In the case of $I= \mathbb R$ there is another definition of the Sobolev spaces for $s\in \mathbb R$
$$
H^s(\mathbb R) = \bigl\{ f\in \EuScript S'(\mathbb R): (1+\xi^2)^{s/2} \widehat f(\xi) \in L_2(\mathbb R)\bigr\} 
$$
with the equivalent norm
$$
\|f\|_{H^s(\mathbb R) } = \frac1{\sqrt{2\pi}} \|(1+\xi^2)^{s/2} \widehat f(\xi)\|_{L_2(\mathbb R)}.
$$
Since $\widehat{f^{(l)}}(\xi) = (i\xi)^l \widehat f(\xi)$, estimate \eqref{A.3} follows from the obvious inequality
$$
|\xi|^{n} \leq c\bigl( |\xi|^k (1+|\xi|^2)^{-m/2} +1\bigr).
$$
\end{proof}

\begin{lemma}\label{LA.2}
Let $k\geq n+m$ for certain natural $n$ and $m$, $I$ be a bounded interval. If $f\in L_2(I)$ and $f^{(k)} \in H^{-m}(I)$, then $f^{(n)} \in L_2(I)$ and for certain constant $c=c(k,m,n,|I|)$
\begin{equation}\label{A.4}
\|f^{(n)}\|_{L_2(I)} \leq c\bigl(\|f^{(k)}\|_{H^{-m}(I)} + \|f\|_{L_2(I)}\bigr).
\end{equation}
\end{lemma}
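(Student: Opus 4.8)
The plan is to prove the estimate \eqref{A.4} directly on $I$, exploiting the description of $H^{-m}(I)$ furnished by \eqref{A.2}, rather than extending $f$ to the whole line and invoking Lemma~\ref{LA.1}; the latter route would require controlling the traces $f,f',\dots,f^{(k-1)}$ at the endpoints of $I$, which are not available. Write $I=(a,b)$, $\ell=|I|$. By \eqref{A.2} there exist $g_0,\dots,g_m\in L_2(I)$ with $f^{(k)}=\sum_{j=0}^m g_j^{(j)}$ in $\mathcal D'(I)$ and $\sum_{j=0}^m\|g_j\|_{L_2(I)}\le c\|f^{(k)}\|_{H^{-m}(I)}$. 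Since $n,m\ge 1$ we have $k\ge n+m>m\ge j$, so every index satisfies $k-j\ge k-m\ge n\ge 1$.

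Next I would build an explicit particular solution of $u^{(k)}=f^{(k)}$. Let $J$ denote integration from the left endpoint, $(Jv)(x)=\int_a^x v\,dt$; then $(Jv)'=v$ and $\|Jv\|_{H^1(I)}\le c(\ell)\|v\|_{L_2(I)}$, so, iterating, $J^{\,k-j}$ maps $L_2(I)$ boundedly into $H^{k-j}(I)$. Set $u_p=\sum_{j=0}^m J^{\,k-j}g_j$. Differentiating $k$ times gives $u_p^{(k)}=\sum_{j=0}^m g_j^{(j)}=f^{(k)}$, while because $k-j\ge n$ for every $j$ one has $u_p\in H^n(I)$ with $\|u_p\|_{H^n(I)}\le c(\ell)\sum_j\|g_j\|_{L_2(I)}\le c\|f^{(k)}\|_{H^{-m}(I)}$.

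It remains to handle the remainder $P:=f-u_p$. Since $P^{(k)}=0$ in $\mathcal D'(I)$ on the connected interval $I$, the function $P$ is a polynomial of degree at most $k-1$. From $f\in L_2(I)$ and $u_p\in H^n(I)\subset L_2(I)$ we obtain $P\in L_2(I)$ with $\|P\|_{L_2(I)}\le\|f\|_{L_2(I)}+\|u_p\|_{L_2(I)}$. The space of polynomials of degree $\le k-1$ on the bounded interval $I$ is finite-dimensional, so all norms on it are equivalent and, by rescaling $x\mapsto(x-a)/\ell$ to track the dependence on $\ell$, $\|P^{(n)}\|_{L_2(I)}\le c(k,n,\ell)\|P\|_{L_2(I)}$. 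Writing $f^{(n)}=u_p^{(n)}+P^{(n)}$ then yields $f^{(n)}\in L_2(I)$ together with \eqref{A.4}.

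The only genuinely delicate points are the choice to solve $u^{(k)}=f^{(k)}$ by the shortest antiderivatives $J^{\,k-j}g_j$, so that the gained smoothness $k-j$ always meets or exceeds the target order $n$ (this is exactly where the hypothesis $k\ge n+m$ enters), and the identification of the kernel of $d^k/dx^k$ with the finite-dimensional space of polynomials. Both steps avoid any circular appeal to the intermediate derivatives of $f$, which is precisely what makes this interval argument cleaner than a cut-off or extension reduction to Lemma~\ref{LA.1}.
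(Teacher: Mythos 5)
Your proposal is correct, and it takes a genuinely different route from the paper's proof. You work on the side of $f$: invoking the representation \eqref{A.2} quantitatively, you pick $g_0,\dots,g_m\in L_2(I)$ with $f^{(k)}=\sum_{j=0}^m g_j^{(j)}$ and build the explicit particular solution $u_p=\sum_{j=0}^m J^{\,k-j}g_j$ by iterated primitives; the hypothesis $k\geq n+m$ enters exactly where you say, through $k-j\geq k-m\geq n$, giving $u_p\in H^n(I)$ with $\|u_p\|_{H^n(I)}\leq c\|f^{(k)}\|_{H^{-m}(I)}$, after which the remainder $P=f-u_p$ lies in the kernel of $d^k/dx^k$ on the connected interval $I$, hence is a polynomial of degree at most $k-1$, and the inverse inequality $\|P^{(n)}\|_{L_2(I)}\leq c(k,n,|I|)\|P\|_{L_2(I)}$ follows from finite-dimensionality (with the $|I|$-dependence tracked by your rescaling). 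The paper instead works on the side of the test function and argues by duality: for an arbitrary $\varphi\in L_2(0,1)$ it constructs, by iterated integration corrected with multiples of the cut-off $\eta$ (formulas \eqref{A.5}, \eqref{A.6}), a function $\varphi_j\in H_0^j(0,1)$ with $j=k-n\geq m$ so that $\varphi=\varphi_j^{(j)}+a_j\eta^{(j)}+\dots+a_1\eta'$; then $|\langle f^{(n)},\varphi_j^{(j)}\rangle|=|\langle f^{(k)},\varphi_j\rangle|$ is bounded via the dual norm \eqref{A.1}, the correction terms reduce to $\langle f,\eta^{(l+n)}\rangle$, and $f^{(n)}\in L_2(I)$ follows by Riesz representation. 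Interestingly, both proofs hinge on the same integrate-and-correct device, applied to opposite objects: your $J^{\,k-j}$ acts on the representatives $g_j$ of $f^{(k)}$, while the paper's \eqref{A.5} acts on $\varphi$ to force the zero boundary values needed for $H_0^j$. What each buys: your argument is more constructive, isolates the role of $k\geq n+m$ transparently, and avoids any a priori discussion of what $\langle f^{(n)},\varphi\rangle$ means, but it relies on the quantitative form of the structure theorem \eqref{A.2} (equivalence of the dual norm with the infimum over representations), which the paper asserts as well known; the paper's duality proof needs only the definition \eqref{A.1}, at the cost of estimating the correction terms $a_l\eta^{(l)}$ explicitly. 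Both correctly stay on the interval without extension, so your opening remark about avoiding a reduction to Lemma \ref{LA.1} matches what the paper in fact does.
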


\begin{proof}
Without loss of generality assume for simplicity that $I=(0,1)$. Let $\varphi \in L_2(0,1)$. Set $\varphi_0(x) \equiv \varphi(x)$ and for any natural $j$
\begin{equation}\label{A.5}
a_j = \int_0^1 \varphi_{j-1}(x')\,dx',\quad \varphi_j(x) \equiv \int_0^x \varphi_{j-1}(x') \,dx' - a_j \eta(x).
\end{equation}
Then $\varphi_j \in H_0^j(0,1)$ and
\begin{equation}\label{A.6}
\|\varphi_j\|_{H^j(0,1)}, |a_j| \leq c(j)\|\varphi\|_{L_2(0,1)},\quad
\varphi(x) = \varphi_j^{(j)}(x) + a_j\eta^{(j)}(x)+\dots a_1 \eta'(x).
\end{equation}
Here for $j= k-n \geq m$
\begin{multline*}
|\langle f^{(n)},\varphi_j^{(j)}\rangle| = |\langle f^{(k)}, \varphi_j\rangle| \leq \|f^{(k)}\|_{H^{-m}(0,1)} \|\varphi_j\|_{H^m(0,1)} \\ \leq
\|f^{(k)}\|_{H^{-m}(0,1)} \|\varphi_j\|_{H^j(0,1)} \leq c \|f^{(k)}\|_{H^{-m}(0,1)} \|\varphi\|_{L_2(0,1)},
\end{multline*}
for any $l$
$$
|\langle f^{(n)}, a_l \eta^{(l)}\rangle| \leq |a_l \langle f, \eta^{(l+n)}\rangle| \leq c(l,n)\|f\|_{L_2(0,1)} \|\varphi\|_{L_2(0,1)}
$$
and \eqref{A.6} implies \eqref{A.4}.
\end{proof}

\begin{lemma}\label{LA.3}
Let $k\geq  n+m$ for certain natural $n$ and $m$. If $f\in H^{n-1}(\mathbb R_+)$ and $f^{(k)} \in H^{-m}(\mathbb R_+)$, then $f^{(n)} \in L_2(\mathbb R_+)$ and for certain constant $c=c(k,m,n)$
\begin{equation}\label{A.7}
\|f^{(n)}\|_{L_2(\mathbb R_+)} \leq c\bigl(\|f^{(k)}\|_{H^{-m}(\mathbb R_+)} + \|f\|_{H^{n-1}(\mathbb R_+)}\bigr).
\end{equation}
\end{lemma}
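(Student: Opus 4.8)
The plan is to localize with the cut-off $\eta$ and reduce to the two situations already settled: the behavior near the endpoint $x=0$ is governed by Lemma~\ref{LA.2} on the bounded interval $(0,1)$, while the behavior at $+\infty$ is governed by Lemma~\ref{LA.1} on the whole line, after extension by zero. First I would treat the near part. Since $f\in L_2(0,1)$ and $f^{(k)}\in H^{-m}(0,1)$ (the restriction of an element of $H^{-m}(\mathbb R_+)$), Lemma~\ref{LA.2} applies to $f$ itself on $I=(0,1)$ and gives $f^{(n)}\in L_2(0,1)$ with $\|f^{(n)}\|_{L_2(0,1)}\le c(\|f^{(k)}\|_{H^{-m}(\mathbb R_+)}+\|f\|_{L_2(\mathbb R_+)})$. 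Applying the same lemma with $n$ replaced by its largest admissible value $k-m$, I would also record the stronger local regularity $f\in H^{k-m}(0,1)$, with norm controlled by the right-hand side of \eqref{A.7}; this extra information is precisely what makes the far part work.

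For the far part I set $w\equiv\eta f$ and extend it by zero to $\mathbb R$, so that $w\in L_2(\mathbb R)$ with $\|w\|_{L_2(\mathbb R)}\le\|f\|_{L_2(\mathbb R_+)}$ and $w=f$ on $[1,+\infty)$; it then suffices to bound $\|w^{(n)}\|_{L_2(\mathbb R)}$ via Lemma~\ref{LA.1}. The only nontrivial hypothesis to verify is $w^{(k)}\in H^{-m}(\mathbb R)$, and this is where the main difficulty lies: the naive Leibniz expansion $w^{(k)}=\eta f^{(k)}+\sum_{i\ge1}\binom{k}{i}\eta^{(i)}f^{(k-i)}$ carries commutator terms involving up to $k-1$ derivatives of $f$, which a priori lie only in $H^{-(k-1)}$, not in $H^{-m}$. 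To control them I would estimate $\langle w^{(k)},\phi\rangle=(-1)^k\int_{\mathbb R_+}\eta f\,\phi^{(k)}\,dx$ for $\phi\in C_0^\infty(\mathbb R)$, writing $\eta\phi^{(k)}=(\eta\phi)^{(k)}-\sum_{i\ge1}\binom{k}{i}\eta^{(i)}\phi^{(k-i)}$. The leading term yields $(-1)^k\langle f^{(k)},\eta\phi\rangle$, which by \eqref{A.1} is bounded by $c\|f^{(k)}\|_{H^{-m}(\mathbb R_+)}\|\phi\|_{H^m}$, because $\eta\phi\in H_0^m(\mathbb R_+)$ with $\|\eta\phi\|_{H^m}\le c\|\phi\|_{H^m}$ (all derivatives of $\eta$ vanish at $0$). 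In each commutator term, whose kernel $\eta^{(i)}$ is supported in $(0,1)$ and flat at both endpoints, I would move $(k-i)-m$ of the derivatives off $\phi^{(k-i)}$ and onto $f\eta^{(i)}$ by integration by parts, the boundary contributions vanishing; since $(k-i)-m\le k-m-1<k-m$, the resulting factor $(f\eta^{(i)})^{((k-i)-m)}$ lies in $L_2(0,1)$ exactly by the local regularity $f\in H^{k-m}(0,1)$ obtained in the near part, while $\phi^{(m)}$ is controlled by $\|\phi\|_{H^m}$. Terms with $k-i\le m$ are handled directly. Collecting these bounds gives $|\langle w^{(k)},\phi\rangle|\le c(\|f^{(k)}\|_{H^{-m}(\mathbb R_+)}+\|f\|_{L_2(\mathbb R_+)})\|\phi\|_{H^m(\mathbb R)}$, and by density of $C_0^\infty(\mathbb R)$ this establishes $w^{(k)}\in H^{-m}(\mathbb R)$ with the corresponding estimate.

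Finally I would assemble the two pieces. Lemma~\ref{LA.1} gives $\|w^{(n)}\|_{L_2(\mathbb R)}\le c(\|w^{(k)}\|_{H^{-m}(\mathbb R)}+\|w\|_{L_2(\mathbb R)})$, whence $\|f^{(n)}\|_{L_2(1,+\infty)}\le\|w^{(n)}\|_{L_2(\mathbb R)}$ is dominated by the right-hand side of \eqref{A.7}; combining with the near estimate on $(0,1)$ yields \eqref{A.7}, after bounding $\|f\|_{L_2(\mathbb R_+)}\le\|f\|_{H^{n-1}(\mathbb R_+)}$. The step I expect to be genuinely delicate is the commutator bookkeeping in the far part, specifically the verification that the number of derivatives one must absorb off the test function never exceeds the local smoothness $k-m$ supplied by Lemma~\ref{LA.2}; everything else reduces to routine integration by parts and the duality characterization \eqref{A.1}, \eqref{A.2}.
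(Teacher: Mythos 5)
Your proposal is correct, and although it shares the paper's overall skeleton---localize with the cut-off $\eta$, handle the piece at $+\infty$ by zero extension to $\mathbb R$ and Lemma~\ref{LA.1}, handle the piece near $x=0$ via Lemma~\ref{LA.2}---your mechanism for the commutator terms is genuinely different. The paper splits $f=\eta f+\eta(1-x)f$, runs the same duality computation on both pieces, and absorbs every commutator term by pairing $f^{(j)}$, $j\le n-1$, directly against derivatives of the test function; this is exactly what forces the hypothesis $f\in H^{n-1}(\mathbb R_+)$, and the paper must also first reduce to the case $k=n+m$ (via $H^{-m}(\mathbb R_+)\subset H^{n-k}(\mathbb R_+)$). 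You instead apply Lemma~\ref{LA.2} to $f$ itself on the interval (no second cut-off needed), bootstrap the local regularity $f\in H^{k-m}(0,1)$, and integrate the excess derivatives by parts onto $f\eta^{(i)}$ inside $(0,1)$, where the boundary terms vanish because $\eta^{(i)}$ is flat at both endpoints. This works for all $k\ge n+m$ at once and, notably, never uses more than $f\in L_2(\mathbb R_+)$: your argument in fact proves the sharper estimate with $\|f\|_{L_2(\mathbb R_+)}$ in place of $\|f\|_{H^{n-1}(\mathbb R_+)}$ on the right-hand side of \eqref{A.7}, so the paper's $H^{n-1}$ hypothesis is an artifact of its proof method. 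The unproved pairing identity $\langle f^{(k)},\eta\phi\rangle=(-1)^k\int f(\eta\phi)^{(k)}\,dx$ is at the same level of rigor as the paper's own manipulations (the infinite-order vanishing of $\eta\phi$ at $0$ justifies approximation by functions from $C_0^\infty(0,+\infty)$), so I do not count it as a gap.

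One pinhole in your final assembly: knowing $f^{(n)}\in L_2(0,1)$ and $f^{(n)}\in L_2(1,+\infty)$ does not by itself rule out a singular contribution of $f^{(n)}$ concentrated at the point $x=1$ (compare $\delta_1$, which restricts to zero on both open pieces). This is trivially repaired: run the near part on $(0,2)$ instead of $(0,1)$ so the two regions overlap and the local $L_2$ representatives glue, or note directly that $f^{(n)}=w^{(n)}+\bigl((1-\eta)f\bigr)^{(n)}$ on $(0,+\infty)$, where the second term is an $L_2$ function supported in $[0,1]$ by the Leibniz rule and your bootstrap $f\in H^{k-m}(0,1)$. The paper's decomposition $f=f_0+f_1$ sidesteps this issue automatically, at the cost of the stronger hypothesis and the extra reduction step.
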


\begin{proof}
First consider the case $k=n+m$. Let $\eta_0(x)\equiv \eta(x)$, $\eta_1(x) \equiv \eta(1-x)$, $f_0(x) \equiv f(x)\eta_0(x)$, $f_1(x) \equiv f(x)\eta_1(x)$ (then $f(x) \equiv f_0(x) +f_1(x)$).

Extend the function $f_0$ by zero to the whole real line. Let $\varphi\in H^m(\mathbb R)$, then
\begin{multline}\label{A.8}
\langle f_0^{(k)}, \varphi\rangle = (-1)^m \langle f_0^{(n)}, \varphi^{(m)}\rangle = (-1)^m \sum\limits_{j=0}^n \binom{n}{j} \langle f^{(j)}, \eta_0^{(n-j)}\varphi^{(m)} \rangle \\ =
(-1)^m \bigl[\langle f^{(n)}, (\eta_0\varphi)^{(m)}\rangle - \sum\limits_{l=0}^{m-1} \binom{m}{l}\langle f^{(n)}, \eta_0^{(m-l)} \varphi^{(l)} \rangle 
+ \sum\limits_{j=0}^{n-1} \binom{n}{j} \langle f^{(j)}, \eta_0^{(n-j)}\varphi^{(m)}\bigr]  \\=
\langle f^{(k)}, \eta_0\varphi\rangle +(-1)^m \bigl[\sum\limits_{l=0}^{m-1} \binom{m}{l}\langle f^{(n-1)}, (\eta_0^{(m-l)} \varphi^{(l)})' \rangle
+ \sum\limits_{j=0}^{n-1} \binom{n}{j} \langle f^{(j)}, \eta_0^{(n-j)}\varphi^{(m)}\bigr] \\ \leq
c\bigl(\|f^{(k)}\|_{H^{-m}(\mathbb R_+)} + \|f\|_{H^{n-1}(\mathbb R_+)}\bigr) \|\varphi\|_{H^m(\mathbb R)},
\end{multline}
whence follows that
\begin{equation}\label{A.9}
\|f_0^{(k)}\|_{H^{-m}(\mathbb R)} \leq \bigl(\|f^{(k)}\|_{H^{-m}(\mathbb R_+)} + \|f\|_{H^{n-1}(\mathbb R_+)}\bigr),
\end{equation}
and then with the use of \eqref{A.3}
that
\begin{equation}\label{A.10}
\|f_0^{(n)}\|_{L_2(\mathbb R_+)} \leq \bigl(\|f^{(k)}\|_{H^{-m}(\mathbb R_+)} + \|f\|_{H^{n-1}(\mathbb R_+)}\bigr).
\end{equation}

For the function $f_1$ the same argument can be repeated for a function $\varphi\in H^m_0(0,1)$ with the natural substitution of $\eta_0$, $\mathbb R$ by $\eta_1$, $(0,1)$. Then similarly to \eqref{A.8}, \eqref{A.9}
$$
\|f_1^{(k)}\|_{H^{-m}(0,1)} \leq  \bigl(\|f^{(k)}\|_{H^{-m}(\mathbb R_+)} + \|f\|_{H^{n-1}(\mathbb R_+)}\bigr),
$$
whence with use of \eqref{A.4} follows that
$$
\|f_1^{(n)}\|_{L_2(\mathbb R_+)} \leq \bigl(\|f^{(k)}\|_{H^{-m}(\mathbb R_+)} + \|f\|_{H^{n-1}(\mathbb R_+)}\bigr).
$$
To finish the proof in the general case it is suffice to note that $H^{-m}(\mathbb R_+) \subset H^{n-k}(\mathbb R_+)$.
\end{proof}

Consider also one analogous result for weighted spaces, although it is not used in the present paper. For an admissible weight function $\psi(x)$ let $H^{k,\psi(x)}(\mathbb R_+)$ be a space of functions $\varphi(x)$ verifying $\varphi\psi^{1/2} \in H^k(\mathbb R_+)$, $L_2^{\psi(x)}(\mathbb R_+) = H^{0,\psi(x)}(\mathbb R_+)$. It is easy to see that $H^{k,\psi(x)}(\mathbb R_+) = \{\varphi(x): \varphi^{(j)}\psi^{1/2} \in L_2(\mathbb R_+), \ j=0,\dots,k\}$. In accordance to \eqref{A.2} define $H^{-k,\psi(x)} = \{ f(x) = \sum\limits_{j=0}^k f_j^{(j)}(x): f_j \in L_2^{\psi(x)}(\mathbb R_+)\ \forall j\}$ equipped with the natural norm.

\begin{lemma}\label{LA.4}
$f\in H^{-k,\psi(x)}(\mathbb R_+)$ iff $f\psi^{1/2} \in H^{-k}(\mathbb R_+)$ with the corresponding equivalence of the norms.
\end{lemma}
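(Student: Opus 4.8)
The plan is to reduce the statement to a single product--rule identity and then apply it with the smooth multipliers $\psi^{1/2}$ and $\psi^{-1/2}$. The starting point is representation \eqref{A.2}: an element of $H^{-k}(\mathbb R_+)$ is precisely a sum $\sum_{j=0}^k g_j^{(j)}$ with $g_j\in L_2(\mathbb R_+)$, while by definition $f\in H^{-k,\psi(x)}(\mathbb R_+)$ means $f=\sum_{j=0}^k f_j^{(j)}$ with $f_j\psi^{1/2}\in L_2(\mathbb R_+)$. Thus I must pass between a decomposition of $f$ and a decomposition of $f\psi^{1/2}$, and the whole difficulty is that multiplying a derivative $f_j^{(j)}$ by a weight does not commute with differentiation.

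First I would establish the following elementary identity: for any infinitely smooth function $b$ on $\overline{\mathbb R}_+$ and any non-negative integer $j$ there exist smooth functions $c_0,\dots,c_j$, each a finite linear combination of derivatives of $b$ of order at most $j$, such that for every distribution $\phi$
\[
b\,\phi^{(j)}=\sum_{l=0}^{j}\bigl(c_l\,\phi\bigr)^{(l)}.
\]
This is proved by induction on $j$: writing $b\,\phi^{(j)}=(b\,\phi^{(j-1)})'-b'\,\phi^{(j-1)}$ and applying the inductive hypothesis to $b\,\phi^{(j-1)}$ and to $b'\,\phi^{(j-1)}$, one regroups everything into the required form, the coefficients being built from derivatives of $b$. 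The crucial point is that $\phi$ is never differentiated beyond the ambient order $j$, so no extra regularity of $\phi$ is needed. I would then record that $\psi^{1/2}$ and $\psi^{-1/2}$ are admissible, so that by \eqref{1.7} (valid for $\psi^s$, any real $s$) one has $|(\psi^{\pm1/2})^{(i)}|\le c(i)\psi^{\pm1/2}$ for every $i$; consequently the coefficients $c_l$ produced by the identity with $b=\psi^{\pm1/2}$ satisfy $|c_l|\le c\,\psi^{\pm1/2}$.

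For the implication $f\in H^{-k,\psi}\Rightarrow f\psi^{1/2}\in H^{-k}$ I take a decomposition $f=\sum_{j=0}^k f_j^{(j)}$ with $f_j\psi^{1/2}\in L_2$, apply the identity with $b=\psi^{1/2}$ and $\phi=f_j$ to each term, and regroup by order of differentiation to obtain $f\psi^{1/2}=\sum_{l=0}^k g_l^{(l)}$ with $g_l=\sum_{j\ge l}c_{jl}f_j$. The bound $|c_{jl}|\le c\,\psi^{1/2}$ gives $|g_l|\le c\sum_j\psi^{1/2}|f_j|$, hence $g_l\in L_2$ and $\|g_l\|_{L_2}\le c\sum_j\|f_j\psi^{1/2}\|_{L_2}$, so $f\psi^{1/2}\in H^{-k}$ with $\|f\psi^{1/2}\|_{H^{-k}}\le c\|f\|_{H^{-k,\psi}}$. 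The reverse implication is identical, now with $b=\psi^{-1/2}$ applied to a decomposition $f\psi^{1/2}=\sum_{j=0}^k g_j^{(j)}$, $g_j\in L_2$, which produces $f=\sum_{l=0}^k f_l^{(l)}$ with $f_l\psi^{1/2}\in L_2$ and the matching estimate. Taking the infimum over admissible decompositions in each case yields the equivalence of the norms.

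I expect the only genuine obstacle to be the bookkeeping in the product--rule identity, namely verifying that the lower--order terms generated by differentiating the weight can always be reabsorbed into derivatives of multiples of $\phi$ of order at most $j$, with coefficients controlled pointwise by the weight. Everything else is a direct consequence of representation \eqref{A.2}, the smoothness and positivity of $\psi$, and property \eqref{1.7}.
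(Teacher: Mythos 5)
Your proof is correct and takes essentially the same approach as the paper: its proof rests on exactly the inverse-Leibniz identity you establish by induction, written there explicitly as $f_j^{(j)}\psi^{\pm 1/2}=\sum_{l=0}^{j}c_{jl}\bigl(f_j(\psi^{\pm 1/2})^{(j-l)}\bigr)^{(l)}$, followed by regrouping by order of differentiation and invoking the admissibility of $\psi^{\pm 1/2}$ via \eqref{1.7} to control the coefficients. The only differences are cosmetic—the paper states the identity without your inductive derivation and leaves the norm estimates implicit.
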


\begin{proof}
Let $f\in H^{-k,\psi(x)}(\mathbb R_+)$, then
$$
f\psi^{1/2} = \sum\limits_{j=0}^k f_j^{(j)} \psi^{1/2} = \sum\limits_{j=0}^k \sum\limits_{l=0}^j c_{jl} \bigl(f_j (\psi^{1/2})^{(j-l)}\bigr)^{(l)}  =
\sum\limits_{l=0}^k \Bigl(\sum\limits_{j=l}^k c_{jl} f_j (\psi^{1/2})^{(j-l)} \Bigr)^{(l)} \equiv \sum\limits_{l=0}^k \widetilde f_l^{(l)},
$$
where $\widetilde f_l^{(l)} \in L_2(\mathbb R_+)$ with the corresponding estimate on the norm.

Conversely, if $f\psi^{1/2} \in H^{-k}(\mathbb R_+)$, then
\begin{multline*}
f = (f\psi^{1/2})\psi^{-1/2} = \sum\limits_{j=0}^k f_j^{(j)} \psi^{-1/2} = \sum\limits_{j=0}^k \sum\limits_{l=0}^j c_{jl} \bigl(f_j (\psi^{-1/2})^{(j-l)}\bigr)^{(l)} \\ =
\sum\limits_{l=0}^k \Bigl(\sum\limits_{j=l}^k c_{jl} f_j (\psi^{-1/2})^{(j-l)} \Bigr)^{(l)} \equiv \sum\limits_{l=0}^k \widetilde f_l^{(l)},
\end{multline*}
where $\widetilde f_l^{(l)} \in L_2^{\psi(x)}(\mathbb R_+)$ with the corresponding estimate on the norm.
\end{proof}

\begin{lemma}\label{LA.5}
If $f\in H^{n-1,\psi(x)}(\mathbb R_+)$ and $f^{(n+1)} \in H^{-1,\psi(x)}(\mathbb R_+)$ for certain admissible weight function $\psi(x)$ and natural $n$, then $f^{(n)} \in L_2^{\psi(x)}(\mathbb R_+)$ and for certain constant $c=c(n,\psi)$
\begin{equation}\label{A.11}
\|f^{(n)}\|_{L_2^{\psi(x)}(\mathbb R_+)} \leq c\bigl(\|f^{(n+1)}\|_{H^{-1,\psi(x)}(\mathbb R_+)} + \|f\|_{H^{n-1,\psi(x)}(\mathbb R_+)}\bigr).
\end{equation}
\end{lemma}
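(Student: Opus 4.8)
The plan is to reduce the weighted statement to the unweighted Lemma~\ref{LA.3} by means of the substitution $g \equiv f\psi^{1/2}$. Since $\psi^{1/2}$ and $\psi^{-1/2}$ are admissible weight functions (as noted after \eqref{1.8}), property \eqref{1.7} furnishes the bounds $|(\psi^{1/2})^{(i)}| \leq c(i)\psi^{1/2}$ and $|(\psi^{-1/2})^{(i)}| \leq c(i)\psi^{-1/2}$ for all $i$. First I would apply the Leibniz rule $g^{(j)} = \sum_{l=0}^j \binom{j}{l} f^{(l)}(\psi^{1/2})^{(j-l)}$ together with these bounds to get $|g^{(j)}| \leq c\sum_{l=0}^j |f^{(l)}|\psi^{1/2}$ for $j \leq n-1$, so that $g \in H^{n-1}(\mathbb R_+)$ with $\|g\|_{H^{n-1}(\mathbb R_+)} \leq c\|f\|_{H^{n-1,\psi(x)}(\mathbb R_+)}$; the symmetric expansion $f^{(j)} = \sum_{l=0}^j \binom{j}{l} g^{(l)}(\psi^{-1/2})^{(j-l)}$ yields the reverse inequality, so the two norms are equivalent.

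The central step is to verify that $g^{(n+1)} \in H^{-1}(\mathbb R_+)$, and this is where I expect the main obstacle. Expanding $g^{(n+1)} = \sum_{l=0}^{n+1}\binom{n+1}{l} f^{(l)}(\psi^{1/2})^{(n+1-l)}$, the top term $l=n+1$ equals $f^{(n+1)}\psi^{1/2}$, which lies in $H^{-1}(\mathbb R_+)$ precisely because $f^{(n+1)} \in H^{-1,\psi(x)}(\mathbb R_+)$, by Lemma~\ref{LA.4}. The terms with $l \leq n-1$ have the form $f^{(l)}(\psi^{1/2})^{(n+1-l)}$, which by the above derivative bound equal a bounded multiple of $f^{(l)}\psi^{1/2} \in L_2(\mathbb R_+) \subset H^{-1}(\mathbb R_+)$. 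The delicate term is $l=n$, namely $f^{(n)}(\psi^{1/2})'$: here one cannot simply invoke $f^{(n)}\psi^{1/2}\in L_2$, since that is the very conclusion being sought, so instead I would rewrite it distributionally as $f^{(n)}(\psi^{1/2})' = \bigl(f^{(n-1)}(\psi^{1/2})'\bigr)' - f^{(n-1)}(\psi^{1/2})''$, observing that $f^{(n-1)}\psi^{1/2}\in L_2(\mathbb R_+)$ forces both $f^{(n-1)}(\psi^{1/2})'$ and $f^{(n-1)}(\psi^{1/2})''$ into $L_2(\mathbb R_+)$; thus the first summand is the derivative of an $L_2$ function, hence in $H^{-1}(\mathbb R_+)$, and the second lies in $L_2(\mathbb R_+)$. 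Collecting these contributions gives $\|g^{(n+1)}\|_{H^{-1}(\mathbb R_+)} \leq c\bigl(\|f^{(n+1)}\|_{H^{-1,\psi(x)}(\mathbb R_+)} + \|f\|_{H^{n-1,\psi(x)}(\mathbb R_+)}\bigr)$.

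With $g \in H^{n-1}(\mathbb R_+)$ and $g^{(n+1)}\in H^{-1}(\mathbb R_+)$ in hand, I would apply Lemma~\ref{LA.3} with $k=n+1$ and $m=1$ (so that $k = n+m$) to obtain $g^{(n)} \in L_2(\mathbb R_+)$ together with the bound $\|g^{(n)}\|_{L_2(\mathbb R_+)} \leq c\bigl(\|g^{(n+1)}\|_{H^{-1}(\mathbb R_+)} + \|g\|_{H^{n-1}(\mathbb R_+)}\bigr)$. To pass back to $f$, I would finally use the identity $f^{(n)}\psi^{1/2} = g^{(n)} - \sum_{l=0}^{n-1}\binom{n}{l} f^{(l)}(\psi^{1/2})^{(n-l)}$, in which every term of the sum lies in $L_2(\mathbb R_+)$ by the $l \leq n-1$ estimate of the preceding paragraph; hence $f^{(n)}\psi^{1/2}\in L_2(\mathbb R_+)$, that is, $f^{(n)}\in L_2^{\psi(x)}(\mathbb R_+)$. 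Chaining the three displayed estimates and invoking the norm equivalence of the first paragraph then delivers exactly \eqref{A.11}.
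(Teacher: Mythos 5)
Your proposal is correct and takes essentially the same route as the paper's proof: the substitution $g\equiv f\psi^{1/2}$, the application of Lemma~\ref{LA.3} with $k=n+1$, $m=1$, the use of Lemma~\ref{LA.4} for the top Leibniz term $f^{(n+1)}\psi^{1/2}$, and in particular the identical key rewriting $f^{(n)}(\psi^{1/2})'=\bigl(f^{(n-1)}(\psi^{1/2})'\bigr)'-f^{(n-1)}(\psi^{1/2})''$ to handle the delicate term. The only difference is that you spell out the norm equivalence $\|g\|_{H^{n-1}(\mathbb R_+)}\sim\|f\|_{H^{n-1,\psi(x)}(\mathbb R_+)}$ and the passage back to $f^{(n)}$, which the paper leaves implicit.
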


\begin{proof}
By virtue of \eqref{A.7}
\begin{multline*}
\|f^{(n)}\|_{L_2^{\psi(x)}(\mathbb R_+)} \leq \|(f\psi^{1/2})^{(n)}\|_{L_2(\mathbb R_+)} + c\|f\|_{H^{n-1,\psi(x)}(\mathbb R_+)} \\ \leq
c_1 \bigl(\|(f\psi^{1/2})^{(n+1)}\|_{H^{-1}(\mathbb R_+)} + \|f\|_{H^{n-1,\psi(x)}(\mathbb R_+)} \bigr),
\end{multline*}
where
\begin{multline*}
(f\psi^{1/2})^{(n+1)} \\= f^{(n+1)}\psi^{1/2} + (n+1)\bigl[\bigl( f^{(n-1)}(\psi^{1/2})'\bigr)' - f^{(n-1)}(\psi^{1/2})''\bigr] + \sum\limits_{j=0}^{n-1} \binom{n+1}{j} f^{(j)} (\psi^{1/2})^{(n+1-j)}
\end{multline*}
 and according to \eqref{A.2}
$$
\|\bigl( f^{(n-1)}(\psi^{1/2})'\bigr)'\|_{H^{-1}(\mathbb R_+)} \leq c \|f^{(n-1)}\|_{L_2^{\psi(x)}}.
$$
\end{proof}

\begin{remark}\label{RA.1}
Inequality \eqref{A.11} was used in \cite{F18, F20, F21} in the case $n=2$ without the detailed proof.
\end{remark}

\subsection*{Conflict of Interests}
The author declares that there is no conflict of interests regarding the publication of this paper.


\end{document}